\newtheorem{theo}{Theorem}
\newtheorem{defi}{Definition}
\newtheorem{prop}{Proposition}
\newtheorem{cor}{Corollary}
\newtheorem{lem}{Lemma}
\theoremstyle{remark}
\newtheorem*{rem}{Remark}
\newcommand{\Om}{\Omega}
\newcommand{\R}{\mathbb R}
\newcommand{\C}{C}
\newcommand{\Rn}{{\mathbb R^d}}
\newcommand{\tr}{\operatorname{Tr}}
\renewcommand{\div}{\operatorname{div}}
\newcommand{\nau}{\nabla u}
\newcommand{\vnau}{\vert \nabla u \vert}
\newcommand{\naue}{\nabla u_\varepsilon}
\newcommand{\vnaue}{\vert \nabla u_\varepsilon \vert}
\newcommand{\ls}{\leqslant}
\newcommand{\scal}[2]{\left( #1\, ,\, #2 \right)}
\newcommand{\scalv}[2]{\left \langle #1\, ,\, #2 \right \rangle}
\newcommand{\Deltas}{\Delta^S}
\newcommand{\nablas}{\nabla^S}
\newcommand{\x}{\mathbf x}
\newcommand{\gs}{\geqslant}
\newcommand{\gep}{k_\varepsilon}
\newcommand{\eps}{\varepsilon}
\date{}
\title{Mean curvature flow with obstacles: \\
existence, uniqueness and regularity of solutions}
\author{
Gwenael Mercier\footnote{CMAP, École polytechnique, Palaiseau, France, 
email: gwenael.mercier@cmap.polytechnique.fr},
Matteo Novaga\footnote{Dipartimento di Matematica, 
Universit\`a di Pisa, Largo Bruno Pontecorvo 5, 56127 Pisa, Italy; e-mail: novaga@dm.unipi.it}}
\begin{document}

\maketitle

\begin{abstract}
We show short time existence and uniqueness of $\C^{1,1}$ solutions 
to the mean curvature flow with obstacles, when the obstacles are 
of class $\C^{1,1}$. If the initial interface is a periodic graph 
we show long time existence of the evolution and convergence 
to a minimal constrained hypersurface.
\end{abstract}

\tableofcontents

\section{Introduction and main results}
Mean curvature flow is a prototypical geometric evolution, arising in many models
from Physics, Biology and Material Science, as well as in a variety of mathematical problems. 
For such a reason, this flow has been widely studied in the past years, 
starting from the pioneristic work of K. Brakke \cite{brakke78}
(we refer to \cite{gage86,huisken84,ecker89,evans91,chen91} for a far from complete list of references).

In some models, one needs to include the presence of hard obstacles, which the evolving surface 
cannot penetrate (see for instance \cite{ESV} and references therein). 
This leads to a double obstacle problem for the mean curvature flow, which reads 
\begin{equation}\label{eqmo}
v\,=\,H \qquad \text{on }M_t\cap U,
\end{equation}
with constraint
\begin{equation}\label{eqcon}
M_t\subset \overline U \qquad \text{for all }t,
\end{equation}
where $v,\,H$ denote respectively the normal velocity and $d$ times the mean curvature of the interface $M_t$, and the open set $U\subset\R^{d+1}$ represents the obstacle.
Notice that, due to the presence of obstacles, 
the evolving interface is in general only of class $C^{1,1}$ 
in the space variable,
differently from the unconstrained case where it is analytic (see 
\cite{ilm98}). 
While the regularity of parabolic obstacle problems is relatively well understood 
(see \cite{sha08} and references therein), a satisfactory existence and uniqueness 
theory for solutions is still missing.

In \cite{chambolle12} (see also \cite{spadaro11}) the authors approximate such an obstacle problem
with an implicit variational scheme introduced in \cite{ATW,LS}. As a byproduct, 
they prove global existence of weak (variational) solutions, and 
short time existence and uniqueness of regular solutions in the two-dimensional case.
In \cite{mercier} the first author adapts to this setting
the theory of viscosity solutions introduced in \cite{c92user,chen91}, 
and constructs globally defined continuous (viscosity) solutions.  

Let us now state the main results of this paper.

\begin{theo}\label{main}
Let $M_0\subset U$ be an initial hypersurface, and assume that 
both $M_0$ and $\partial U$ are uniformly of class $C^{1,1}$,
with ${\rm dist}(M_0,\partial U)>0$.
Then there exists $T>0$ and a unique solution $M_t$ to \eqref{eqmo},
\eqref{eqcon} on $[0,T)$, such that $M_t$ is of class $C^{1,1}$
for all $t\in [0,T)$.
\end{theo}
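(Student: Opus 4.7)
The plan is to reduce \eqref{eqmo}--\eqref{eqcon} to a scalar one-sided parabolic obstacle problem on $M_0$ via normal-graph parametrization. Since $\delta := \mathrm{dist}(M_0,\partial U) > 0$ and both $M_0$ and $\partial U$ are uniformly $C^{1,1}$, on the tubular neighborhood of $M_0$ of width $\delta/2$ the portion of $\partial U$ facing $M_0$ can be written as a normal graph $x \mapsto x + g(x)\nu_0(x)$, $x \in M_0$, with $g \in C^{1,1}(M_0)$ and $g \geqslant \delta$. For short time I look for $M_t = \{x + f(x,t)\nu_0(x) : x \in M_0\}$ with $f(\cdot,0)\equiv 0$, so that \eqref{eqmo}--\eqref{eqcon} becomes the one-sided obstacle problem
\[
\partial_t f - F(x,Df,D^2 f) \geqslant 0,\qquad f \leqslant g,\qquad \bigl[\partial_t f - F(x,Df,D^2 f)\bigr](g-f) = 0,
\]
with $F$ the uniformly elliptic quasilinear operator describing mean curvature flow of normal graphs over $M_0$. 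Note the initial separation $f(\cdot,0)=0 < g$.

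For existence, I would use a penalization scheme: for $\eps>0$, pick a smooth convex nondecreasing $\beta_\eps:\R\to\R$ vanishing on $(-\infty,0]$ and converging, as $\eps\to 0$, to $+\infty$ on $(0,\infty)$, and solve
\[
\partial_t f_\eps = F(x,Df_\eps,D^2 f_\eps) - \beta_\eps(f_\eps - g),\qquad f_\eps(\cdot,0)=0,
\]
by standard short-time quasilinear parabolic theory. The crucial task is then to extract uniform estimates on a common interval $[0,T]$: $L^\infty$-bounds by comparison with $g$ and with the smooth unconstrained mean curvature flow (forcing $f_\eps \leqslant g + o_\eps(1)$); $C^{1,\alpha}$-bounds via parabolic Schauder theory applied to the linearization; and most importantly a uniform $W^{2,\infty}$-bound. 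For this last estimate, on the near-coincidence set where $\beta_\eps(f_\eps-g)$ is large, one exploits that $\partial_t f_\eps$ stays controlled and $F(x,Dg,D^2 g)$ is bounded by $\|g\|_{C^{1,1}}$ to deduce a one-sided Hessian bound $D^2 f_\eps \leqslant D^2 g + C$, while the opposite bound comes from parabolic smoothing in the non-contact region. Compactness then produces $f\in C^{1,1}$ solving the obstacle problem.

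Uniqueness should follow from a parabolic comparison argument for two $C^{1,1}$-solutions $f_1,f_2$: the difference $w = f_1 - f_2$ satisfies, almost everywhere, a linear parabolic inequality with bounded measurable coefficients (obtained by integrating the linearization of $F$ along the segment between $f_1$ and $f_2$), and in the coincidence set the ordering $f_i\leqslant g$, together with the fact that equality corresponds to activation of the obstacle, supplies the sign information needed to close the parabolic maximum principle and conclude $w\equiv 0$. The local graph solutions are then patched into a global one on $M_0$ by covering with finitely many $C^{1,1}$-charts and invoking local uniqueness. I expect the principal difficulty to be the uniform $W^{2,\infty}$-estimate: standard penalization gives $W^{2,p}$ for every finite $p$, but pushing this to $L^\infty$ is delicate because $F$ is quasilinear with coefficients depending on $Df_\eps$, so one must carefully propagate the $C^{1,1}$-regularity of $g$ through the nonlinearity rather than appealing directly to linear obstacle-problem theory.
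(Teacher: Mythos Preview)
Your approach is genuinely different from the paper's, and it has a structural difficulty that the paper's method is specifically designed to avoid. The central problem is the normal-graph parametrization over $M_0$. Since $M_0$ is only $C^{1,1}$, its unit normal $\nu_0$ is merely Lipschitz and its second fundamental form is only $L^\infty$. When you express the mean curvature of $\{x+f(x,t)\nu_0(x)\}$ as $F(x,Df,D^2f)$, the $x$-dependence enters through $A_{M_0}$ and $\nabla\nu_0$, which are only bounded measurable. This breaks the ``standard short-time quasilinear parabolic theory'' you invoke (Schauder needs H\"older coefficients), and it also blocks any direct appeal to the Petrosyan--Shahgholian $C^{1,1}$ result, which is stated for operators $F(D^2u,Du)$ without rough $x$-dependence. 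To repair this you would have to regularize $M_0$ as well, and then every estimate---in particular the $W^{2,\infty}$ bound you already flag as delicate---must be made uniform in that second parameter; this is precisely the hard work you are trying to bypass. Two further gaps: the constraint $M_t\subset\overline U$ is two-sided (writing $M_0=\partial E(0)$, the set $\R^{d+1}\setminus U$ can meet both $E(0)$ and $E(0)^c$), so the scalar problem is $\psi^-\le f\le\psi^+$, not one-sided; and nothing in the hypotheses guarantees that $\partial U$ is a normal graph over $M_0$ at all.

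The paper sidesteps all of this by never parametrizing over $M_0$. It replaces the hard obstacle by a bounded ambient forcing $k_\varepsilon$ and runs the smooth extrinsic flow \eqref{evolg}. The key point (Proposition~\ref{propballs}) is a covering by balls $B_i$ with directions $\omega_i$ chosen so that both $M_t^\varepsilon$ and $\partial\Omega^\pm$ are graphs over the \emph{flat} hyperplane $\omega_i^\perp$, and so that $\langle\nabla k_\varepsilon,\omega_i\rangle\ge|\nabla k_\varepsilon|/2$; this sign condition is exactly what lets the maximum-principle computations for $v^2\phi^2$ and $\phi^2|A|^2\psi(v^2)$ (Lemmas~\ref{lemcgrad}--\ref{lemc2ff}, Propositions~\ref{nolossemb}--\ref{second}) close uniformly in $\varepsilon$. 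In the limit each local graph $u_i$ solves a parabolic obstacle problem for the translation-invariant graphical MCF operator, so Petrosyan--Shahgholian applies cleanly. Your penalization is morally the same as the paper's soft-obstacle approximation; the decisive difference is the choice of base---a rough $M_0$ versus a flat hyperplane---and that choice determines whether the coefficients are $L^\infty$ or $C^\infty$.
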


Notice that Theorem \ref{main} extends a result in \cite{chambolle12} to 
dimensions greater than two.

When the hypersurface $M_t$ can be written as the graph 
of a function $u(\cdot,t):\R^d\to \R$, equation \eqref{eqmo} reads
\begin{equation}\label{eqg}
u_t = \sqrt{1+\vnau^2}\,\div \left( \frac{\nau}{\sqrt{1+\vnau^2}} \right). 
\end{equation}
If the obstacles are also graphs, the constraint \eqref{eqcon} can be 
written as 
\begin{equation}\label{eqcongr}
\psi^-\ls u \ls \psi^+\,, 
\end{equation}
where the functions $\psi^\pm:\Rn\to \R$ denote the obstacles.

\begin{theo}\label{mainbis}
Assume that $\psi^\pm\in C^{1,1}(\R^d)$, and
let $u_0\in C^{1,1}(\R^d)$ satisfy \eqref{eqcongr}.
Then there exists a unique (viscosity) solution $u$ of 
\eqref{eqg}, \eqref{eqcongr}
on $\R^d\times [0,+\infty)$, such that 
\begin{eqnarray*}
\|\nabla u(\cdot,t)\|_{L^\infty(\R^d)}&\le& \max\left(
\|\nabla u_0\|_{L^\infty(\R^d)} , \|\nabla \psi^\pm\|_{L^\infty(\R^d)}
\right)
\\
\\
\|u_t(\cdot,t)\|_{L^\infty(\R^d)}&\le&
\left\|\sqrt{1+|\nau_0|^2}\div \left( \frac{\nau_0}{\sqrt{1+|\nau_0|^2}} \right)
\right\|_{L^\infty(\R^d)}
\end{eqnarray*}
for all $t>0$.
Moreover $u$ is also of class $C^{1,1}$ uniformly on $[0,+\infty)$. 
\end{theo}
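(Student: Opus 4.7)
The plan is to obtain a globally defined viscosity solution from the theory developed in \cite{mercier}, combine it with Theorem \ref{main} for short-time $C^{1,1}$ regularity, and establish the a~priori estimates by comparison-principle arguments that exploit the translation invariance of \eqref{eqg} in both space and time. Uniqueness follows at once from the comparison principle for the obstacle problem \eqref{eqg}, \eqref{eqcongr} proved in \cite{mercier}.

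For the gradient bound, set $L:=\max(\|\nabla u_0\|_\infty,\|\nabla\psi^+\|_\infty,\|\nabla\psi^-\|_\infty)$. For each $h\in\R^d$ the spatial translate $u_h^-(x,t):=u(x+h,t)-L|h|$ solves \eqref{eqg} by translation invariance and satisfies $u_h^-(\cdot,0)\le u_0$ by the choice of $L$. Moreover $u_h^-(x,t)\le u(x+h,t)\le \psi^+(x+h)\le \psi^+(x)+L|h|$, so $u_h^-\le\psi^+$; the lower obstacle is irrelevant for a subsolution. The comparison principle gives $u_h^-\le u$, and the symmetric argument with $u_h^+(x,t):=u(x+h,t)+L|h|$ yields $u\le u_h^+$. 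Combining and letting $|h|\to 0$ gives $\|\nabla u(\cdot,t)\|_\infty\le L$.

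For the bound on $u_t$, let $C_0$ denote the right-hand side of the second displayed inequality. One first checks that $u_0(x)\pm C_0 t$ are respectively a super- and a subsolution of the obstacle problem: for instance $(u_0+C_0 t)_t=C_0\ge \sqrt{1+|\nabla u_0|^2}\,\div(\nabla u_0/\sqrt{1+|\nabla u_0|^2})$ almost everywhere by the very definition of $C_0$, and $u_0+C_0 t\ge u_0\ge\psi^-$. Comparison therefore yields $|u(x,s)-u_0(x)|\le C_0 s$ for all $s>0$. Now apply the same reasoning to the time-translate $v(x,t):=u(x,t+s)$, which solves the same problem with initial datum $u(\cdot,s)$: comparing $v\pm C_0 s$ with $u$ gives $|u(x,t+s)-u(x,t)|\le C_0 s$ for all $x,t,s$, which is precisely the claimed $L^\infty$ bound on $u_t$.

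To obtain the uniform $C^{1,1}$ regularity I would argue pointwise. Where $\psi^-<u<\psi^+$ the PDE \eqref{eqg} holds classically, and the $L^\infty$ bounds on $u_t$ and $\nabla u$ (the latter making the underlying quasilinear operator uniformly elliptic) yield $D^2 u\in L^\infty$ by standard parabolic regularity. Where $u=\psi^+$ (resp.~$u=\psi^-$) the obstacle inequality forces $D^2 u\le D^2\psi^+$ (resp.\ $D^2 u\ge D^2\psi^-$) in the viscosity sense, providing one-sided control, while the opposite one-sided bound is again furnished by the equation and the $u_t$ bound. These estimates are uniform in $t$, so Theorem \ref{main} can be re-applied from any time $t_0$ with a time step bounded below, giving existence on $[0,+\infty)$. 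The step I expect to be most delicate is the justification of $u_0\pm C_0 t$ as viscosity sub/supersolutions when $u_0$ is only $C^{1,1}$, since then $\div(\nabla u_0/\sqrt{1+|\nabla u_0|^2})$ exists only almost everywhere; this is handled by approximating $u_0$ by data strictly inside the obstacle channel and passing to the limit using stability of viscosity solutions.
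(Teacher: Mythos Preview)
Your translation-comparison arguments for the gradient and $u_t$ bounds are a legitimate and more elementary alternative to the paper's approach, which instead approximates the obstacle by a smooth forcing term $k_\varepsilon$, derives the estimates on the smooth solutions $u_\varepsilon$ via intrinsic maximum-principle computations (Propositions \ref{spatial} and \ref{timebound}), and passes to the limit. A few details need tightening: per the definition in Appendix \ref{appvisco} a subsolution must also satisfy $u\ge\psi^-$, so you should replace $u_h^-$ by $u_h^-\vee\psi^-$ (Proposition \ref{wedgevee}); your chain $u_h^-\le u(\cdot+h,t)\le\psi^+(\cdot+h)\le\psi^++L|h|$ only gives $u_h^-\le\psi^++L|h|$, but the correct chain $u_h^-=u(\cdot+h,t)-L|h|\le\psi^+(\cdot+h)-L|h|\le\psi^+$ does work; and similarly $u_0+C_0t$ and $u+C_0s$ need to be truncated by $\psi^+$ before the comparison principle applies.

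There is, however, a genuine gap in your $C^{1,1}$ argument. The bounds on $u_t$ and $\nabla u$, together with \eqref{eqg}, control only the mean-curvature combination $g^{ij}\partial_{ij}u$, a single scalar quantity; they do not give a pointwise bound on the full Hessian. ``Standard parabolic regularity'' with $L^\infty$ right-hand side yields $W^{2,p}$ for all $p<\infty$, hence $C^{1,\alpha}$ for all $\alpha<1$, but the step to $W^{2,\infty}$ famously fails. Likewise, on the contact set the equation does not furnish a one-sided bound on $D^2u$, only on its trace with respect to $g^{ij}$. This is exactly why the paper, after reaching $C^{1,\alpha}$, invokes the obstacle-problem regularity result of Petrosyan--Shahgholian (Theorem \ref{thmsha}) to obtain $C^{1,1}$; that theorem is the missing ingredient in your sketch. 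A related problem is your plan to iterate Theorem \ref{main}: its hypothesis ${\rm dist}(M_0,\partial U)>0$ is in general violated at any restart time $t_0>0$ once the solution has touched an obstacle, so the iteration does not go through. The paper sidesteps this entirely by obtaining global-in-time existence already at the approximate level.
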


We observe that Theorem \ref{mainbis} extends previous results by Ecker and Huisken \cite{ecker89} in the unconstrained case (see also \cite{cesnov13}). 

\begin{theo}\label{maintris}
Assume that $u_0$ and $\psi^\pm$ are $Q$-periodic,
with periodicity cell $Q=[0,L]^d$, for some $L>0$.
Then the solution $u(\cdot,t)$ of \eqref{eqg}, \eqref{eqcongr} 
is also $Q$-periodic. Moreover
there exists a sequence $t_n \to +\infty$ such that $u(\cdot,t_n)$ converges uniformly as $n\to +\infty$ to a stationary solution to \eqref{eqg}, \eqref{eqcongr}.
\end{theo}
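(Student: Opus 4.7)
The plan is to obtain periodicity from uniqueness, and then extract a stationary limit by combining area monotonicity with compactness of the trajectory. For any basis vector $e_i$ of the lattice, the translate $\tilde u(x,t) := u(x + L e_i, t)$ solves \eqref{eqg},\eqref{eqcongr} with the same initial datum and the same obstacles, by $Q$-periodicity of $u_0$ and $\psi^\pm$. The uniqueness part of Theorem \ref{mainbis} forces $\tilde u \equiv u$, so every $u(\cdot,t)$ is $Q$-periodic. Combined with the uniform $C^{1,1}$-in-$x$ and Lipschitz-in-$t$ estimates of Theorem \ref{mainbis}, Arzel\`a--Ascoli makes the whole trajectory $\{u(\cdot,t)\}_{t \ge 0}$ relatively compact in $C^1(Q)$.

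\textbf{Area as a Lyapunov functional.} I would use the graph area over one period,
\begin{equation*}
A(w) \,:=\, \int_Q \sqrt{1+|\nabla w|^2}\, dx.
\end{equation*}
On the non-coincidence set $\{\psi^- < u < \psi^+\}$ the PDE \eqref{eqg} holds a.e., while on the (time-independent) contact sets $\{u = \psi^\pm\}$ one has $u_t = 0$ a.e. Integrating by parts on the torus $Q$ and splitting the integral along these two regions then gives
\begin{equation*}
\frac{d}{dt} A(u(\cdot,t)) \,=\, -\int_Q u_t\, \operatorname{div}\!\left(\frac{\nabla u}{\sqrt{1+|\nabla u|^2}}\right) dx \,=\, -\int_Q \frac{u_t^2}{\sqrt{1+|\nabla u|^2}}\, dx \,\le\, 0,
\end{equation*}
with strict inequality unless $u$ is stationary. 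Integrating in $t$ and using the uniform gradient bound yields $\int_0^{+\infty}\!\int_Q u_t^2\,dx\,dt < +\infty$, and the monotone limit $A_\infty := \lim_{s \to +\infty} A(u(\cdot,s))$ exists and is finite.

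\textbf{Extraction and identification of the limit.} Given $t_n \to +\infty$, I set $v_n(x,t) := u(x,t+t_n)$. The uniform estimates of Theorem \ref{mainbis} let me apply Arzel\`a--Ascoli on $Q \times [0,T]$ for every $T > 0$, producing along a subsequence a limit $v_\infty$ with $v_n \to v_\infty$ uniformly; in particular $u(\cdot,t_n) \to u_\infty := v_\infty(\cdot,0)$ uniformly. Stability of (viscosity) solutions under uniform convergence of equi-Lipschitz solutions shows that $v_\infty$ is itself a solution of \eqref{eqg},\eqref{eqcongr} with initial datum $u_\infty$. The $C^1$ convergence of $v_n(\cdot,t)$ for each $t$ then gives $A(v_\infty(\cdot,t)) = A_\infty$ for all $t \ge 0$, and the strict dissipation from the previous step forces $(v_\infty)_t \equiv 0$. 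Hence $v_\infty(\cdot,t) \equiv u_\infty$, so $u_\infty$ is stationary. I expect the main difficulty to be this last step: rigorously justifying the energy identity at $C^{1,1}$ regularity, and passing the stability of the \emph{constrained} flow across the contact sets (where the PDE degenerates into a variational inequality), forces one to marry the variational structure with viscosity-type stability rather than relying on either alone.
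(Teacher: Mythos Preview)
Your argument is correct and shares the core Lyapunov computation with the paper: both derive
\[
\frac{d}{dt}\int_Q \sqrt{1+|\nabla u|^2}\,dx = -\int_Q \frac{u_t^2}{\sqrt{1+|\nabla u|^2}}\,dx,
\]
using that $u_t=0$ a.e.\ on the contact set, and hence $\int_0^\infty\!\int_Q u_t^2<\infty$. The difference lies in the extraction step. The paper bypasses the LaSalle/omega-limit machinery entirely: from $u_t\in L^2((0,\infty);L^2(Q))$ it directly selects $t_n\to\infty$ with $\|u_t(\cdot,t_n)\|_{L^2(Q)}\to 0$, passes to a uniform limit $u_\infty$ along a further subsequence by equi-Lipschitzness, and reads off that $u_\infty$ is a stationary viscosity solution of the constrained problem. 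This avoids the two issues you correctly flag as delicate---stability of the \emph{constrained} viscosity flow under uniform limits, and the validity of the energy identity for the limit $v_\infty$---because no limit \emph{flow} is ever needed. Your route is the standard dynamical-systems one (shift in time, show the limit evolution has constant energy, conclude it is an equilibrium); it is perfectly valid here thanks to the uniform $C^{1,1}$ bounds of Theorem~\ref{mainbis}, and it actually yields a bit more (every subsequential limit along \emph{any} $t_n\to\infty$ is stationary), at the cost of the extra justification you mention. Your treatment of periodicity via uniqueness is also fine; the paper simply takes this for granted.
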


Our strategy will be to approximate the obstacles with ``soft obstacles'' modeled by 
a sequence of uniformly bounded forcing terms. Differently from \cite{chambolle12}, where the existence of 
regular solution is derived from variational estimates on the approximating scheme,
we obtain estimates on the evolving interface,
in the spirit of \cite{ecker91,ecker91b,cesnovval11}, which are uniform in the forcing terms.

\subsection*{Acknowledgements}
We wish to thank to Antonin Chambolle 
for interesting discussions and useful comments on this work. 
The first author was partially supported by the ANR-12-BS01-0014-01 Project Geometrya.

\section{Mean curvature flow with a forcing term}

\subsection{Evolution of geometric quantities}

Let $M$ be a complete orientable $d$-dimensional Riemannian manifold without boundary,
let $F(\cdot,t) : M \to \R^{d+1}$ be a smooth family of immersions,
and denote by $M_t$ the image $F(M,t).$ 
Since $M_t$ is orientable, we can write $M_t=\partial E(t)$ where $E(t)$
 is a family of open subsets of $\R^{d+1}$ depending smoothly on $t$.
We say that $M_t$ evolves by mean curvature with forcing term $k$ if
\begin{equation}\label{evolg}
\frac{d}{dt} F(p,t) = - \big(H(p,t)+ k(F(p,t))\big)\, \nu(p,t),
\end{equation}
where $k:\R^{d+1}\to \R$ is a smooth forcing term, $\nu$ is the unit normal 
to $M_t$ pointing outside $E(t)$, and $H$ is ($d$ times) 
the mean curvature of $M_t$,
with the convention that $H$ is positive whenever $E(t)$ is convex.

We shall compute the evolution of some relevant geometric quantities
under the law \eqref{evolg}. 
We denote by $\nablas,\,\Deltas$ respectively the covariant derivative and 
the Laplace-Beltrami operator on $M$.
As in \cite{huisken84}, 
the metric on $M_t$ is denoted by $g_{ij}(t)$, it inverse is $g^{ij}(t)$,
the scalar product (or any tensors contraction using the metric) on $M_t$ is denoted by $\scalv{\cdot}{\cdot}$ whereas the ambiant scalar product is $\scal{\cdot}{\cdot}$,
the volume element is $\mu_t$, and
the second fondamental form is $A$. 
In particular we have
$A\left(\partial_i,\partial_j\right)=h_{ij}$, 
where we set for simplicity $\partial_i=\frac{\partial}{\partial x_i}$, 
and $H=h_{ii}$, using the Einstein notations (we implicitly sum every index which appears twice in an expression).
We also denote by $\lambda_1,\ldots,\lambda_d$ the eigenvalues of $A$.

Notice that, in terms of the parametrization $F$, we have
\begin{equation}\label{eqij}
g_{ij} = \scal{\partial_i F}{\partial_j F},
\qquad
h_{ij} = -\scal{\partial^2_{ij} F}{\nu}
\qquad \text{for all }i,j\in \{1,\ldots,d\}.
\end{equation}

\begin{prop}
The following equalities hold:
\begin{eqnarray}\label{metricevol}
\frac{d}{dt} g_{ij} &=& -2(H+k) h_{ij}
\\\label{normalevol}
\frac{d}{dt} \nu &=& \nablas (H+k)
\\\label{eqmeas}
\frac{d}{dt} \mu_t &=& -H(H+k) \mu_t
\\\label{secondevol}
\frac{d}{dt} h_{ij} &=& \Deltas h_{ij} +\nablas_i \nablas_j k-2H h_{il}g^{lm}h_{mj} 
-k g^{ml} h_{im} h_{jl} +|A|^2 h_{ij}
\\\label{mcevol}
 \frac{d}{dt} H &=& \Deltas (H+k) +(H+k) |A|^2 
\\\label{evoln2FF}
\frac{d}{dt} |A|^2 &=& \Deltas |A|^2 +2 k g^{ij}g^{sl}g^{mn} h_{is} h_{lm} h_{nj} 
+ 2|A|^4 - 2 |\nablas A|^2+ 2 \scalv{A}{(\nablas)^2 k}.
\end{eqnarray}
\end{prop}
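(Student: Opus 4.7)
The plan is to follow Huisken's computation for the unforced mean curvature flow \cite{huisken84}, carry along the extra contributions generated by the forcing term $k$, and apply Simons' identity at the end to rewrite second covariant derivatives of $H$ in terms of $\Deltas h_{ij}$. Throughout, the only ingredients are the definitions \eqref{eqij}, the evolution law \eqref{evolg}, and standard submanifold identities (Gauss and Weingarten formulas).

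Equations \eqref{metricevol} and \eqref{normalevol} are direct. Differentiating $g_{ij}=\scal{\partial_i F}{\partial_j F}$ in time, commuting $\partial_t$ with $\partial_i$ and using \eqref{evolg}, one sees that only the normal component of $\partial_i((H+k)\nu)$ contributes against the tangent vector $\partial_j F$, which gives \eqref{metricevol}. For \eqref{normalevol}, the identities $|\nu|^2=1$ and $\scal{\nu}{\partial_j F}=0$ show that $\partial_t\nu$ is tangent with components $g^{ij}\partial_j(H+k)$, after inserting \eqref{evolg}. Equation \eqref{eqmeas} then follows from the Jacobi formula $\partial_t\det g=(\det g)\,g^{ij}\partial_t g_{ij}$ together with \eqref{metricevol} and $g^{ij}h_{ij}=H$.

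The heart of the proof is \eqref{secondevol}. Starting from $h_{ij}=-\scal{\partial^2_{ij}F}{\nu}$, I would differentiate in time, substitute \eqref{evolg} and \eqref{normalevol}, and expand $\partial^2_{ij}((H+k)\nu)$, converting the normal derivatives of $\nu$ into tangential quantities via the Weingarten relation $\partial_i\nu=h_{il}g^{lm}\partial_m F$ (so that $\scal{\partial^2_{ij}\nu}{\nu}=-h_{il}g^{lm}h_{mj}$). The Gauss formula $\partial^2_{ij}F=\Gamma^k_{ij}\partial_k F-h_{ij}\nu$ lets the Christoffel terms combine with the Euclidean Hessian of $H+k$ into a covariant Hessian, yielding the intermediate identity
\[
\partial_t h_{ij}=\nablas_i\nablas_j(H+k)-(H+k)\,h_{il}g^{lm}h_{mj}.
\]
Finally, Simons' identity
\[
\Deltas h_{ij}=\nablas_i\nablas_j H + H\,h_{il}g^{lm}h_{mj}-|A|^2 h_{ij}
\]
eliminates $\nablas_i\nablas_j H$ and produces \eqref{secondevol}. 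The remaining identities are contractions: \eqref{mcevol} comes from $H=g^{ij}h_{ij}$ by differentiating both factors (with $\partial_t g^{ij}=2(H+k)h^{ij}$), the various $A$-terms assembling into $(H+k)|A|^2$ and the Hessian and Laplacian terms recombining into $\Deltas(H+k)$; and \eqref{evoln2FF} comes from $|A|^2=g^{ij}g^{kl}h_{ik}h_{jl}$ by the same bookkeeping, using $\Deltas|A|^2=2h^{ij}\Deltas h_{ij}+2|\nablas A|^2$ to recognize the Laplacian and reading $2\scalv{A}{(\nablas)^2 k}$ off the $\nablas_i\nablas_j k$ contribution.

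The main obstacle is the algebraic bookkeeping leading to \eqref{secondevol}: one has to separate tangential and normal components carefully, handle Christoffel symbols so as to reconstruct covariant derivatives, and invoke Simons' identity at the right moment. Once \eqref{secondevol} is established, the other two evolutions are essentially mechanical consequences of \eqref{metricevol}, \eqref{secondevol}, and the identities for the evolution of $g^{ij}$.
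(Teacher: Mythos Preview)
Your proposal is correct and follows essentially the same route as the paper: direct differentiation for $g_{ij}$, $\nu$, and $\mu_t$; the intermediate identity $\partial_t h_{ij}=\nablas_i\nablas_j(H+k)-(H+k)\,h_{il}g^{lm}h_{mj}$; then Simons' identity (which the paper derives via Codazzi) to obtain \eqref{secondevol}; and finally contraction with $g^{ij}$ and $h^{ij}$ for \eqref{mcevol} and \eqref{evoln2FF}. The only cosmetic difference is that the paper traces $\partial_t H$ from the intermediate form of $\partial_t h_{ij}$ rather than from the final \eqref{secondevol}, which avoids a small cancellation but is otherwise equivalent.
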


\begin{proof}
The proof follows by direct computations as in 
\cite{huisken84,ecker91b}. 
Recalling \eqref{eqij}, we get
\begin{eqnarray*}
\frac{d}{dt} g_{ij} &=&
\frac{d}{dt}\scal{\partial_i F}{\partial_j F} = 
-(H+k)\left(
\scal{\partial_i \nu}{\partial_j F}
+\scal{\partial_i F}{\partial_j \nu}
\right) 
= -2(H+k) h_{ij}
\\
\frac{d}{dt} \nu &=& \scal{\frac{d}{dt} \nu}{\partial_i F} 
g^{ij}{\partial_j F}  = - \scal{\nu}{\frac{d}{dt} \partial_i F} 
g^{ij}{\partial_j F} \\\nonumber
&=& \scal{\nu}{\partial_i ((H+k)\nu)}g^{ij}\partial_j F 
= \partial_i (H+k) g^{ij} {\partial_j F} = \nablas (H+k).
\end{eqnarray*}

The evolution of the measure on $M_t$ $$\mu_t = \sqrt{\det [g]}$$ is given by
\begin{eqnarray*}
 \frac{d}{dt} \sqrt{\det [g]} & = & \frac{\frac{d}{dt} \det [g]}{2 \sqrt{\det [g]}} = \frac{\det [g] \cdot \tr \left(g^{ij}\,\frac{d}{dt} g_{ij}\right)}{2 \sqrt{\det [g]}}
\\
&=&- \sqrt{\det [g]} \cdot (H+k) g^{ij} h_{ji} = -\mu_t H(H+k).
\end{eqnarray*}

In order to prove \eqref{secondevol} we compute (as usual, we denote the Christoffel symbols by $\Gamma_{ij}^k$)
\begin{eqnarray}\nonumber
 \frac{d}{dt} h_{ij} &=& 
- \frac{d}{dt} \scal{\nu}{\partial^2_{ij} F} 
\\\nonumber
&=& -\scal{\nablas (H+k)}{\partial^2_{ij} F} 
+ \scal{\partial^2_{ij} (H+k)\nu}{\nu} \\\nonumber
&=&- \scal{g^{kl} \partial_k(H+k) \partial_l F}{\Gamma_{ij}^k \partial_k F - h_{ij} \nu} 
\\\nonumber
&& + \partial^2_{ij} (H+k) + (H+k) 
\scal{\partial_j \left( h_{im} g^{ml} \partial_l F \right)}{\nu} \\\nonumber
& =& \partial^2_{ij} (H+k) -\Gamma_{ij}^k \partial_k (H+k) + (H+k)h_{im} g^{ml}\scal{  \Gamma_{lj}^k \partial_k F - h_{lj} \nu}{\nu} 
\\\label{eqnu}
&=& \nablas_i \nablas_j (H+k) - (H+k) h_{il} g^{lm} h_{mj}.
\end{eqnarray}
Using Codazzi's equations, one can show that
\begin{equation}\Deltas h_{ij} = \nablas_i \nablas_j H 
+ H h_{il}g^{lm}h_{mj} - |A|^2 h_{ij}, \label{dhij} \end{equation}
so that \eqref{secondevol} follows from \eqref{dhij} and \eqref{eqnu}.
%
{}From \eqref{secondevol} we deduce
\begin{equation*}
\begin{aligned}
 \frac{d}{dt} H &= \frac{d}{dt} g^{ij} h_{ij} \\&= 2(H+k)g^{is}h_{sl} g^{lj} h_{ij} +g^{ij} \left(\nablas_i \nablas_j (H+k) - (H+k) h_{il} g^{lm} h_{mj} \right) \\
&=\Deltas (H+k) +(H+k) |A|^2 ,
\end{aligned}
\end{equation*}
which gives \eqref{mcevol}.
In addition, we get
\begin{equation}\label{eqB}
\begin{aligned}
\frac{d}{dt} |A|^2 &= \frac{d}{dt} \left( g^{ik}g^{jl} h_{ij} h_{kl} \right) \\
&= 2 \frac{d}{dt} g^{jl} h_{ij} h_{kl} + 2 g^{ik} g^{jl} \frac{d}{dt} h_{ij} h_{kl} \\
&= 2 \left( 2(H+k)g^{js} h_{st} g^{tl}\right) g^{jl} h_{ij} h_{kl} \\
& \quad + 2 g^{ik} g^{jl} \left(\Deltas h_{ij} +\nablas_i \nablas_j k-2H h_{il}g^{lm}h_{mj} -k g^{ml} h_{im} h_{jl} +|A|^2 h_{ij} \right) h_{kl} \\
&= 2k g^{js} h_{st} g^{tl} g^{jl} h_{ij} h_{kl}+2 g^{ik} g^{jl} \Deltas h_{ij}h_{kl} + 2|A|^4 + 2 \scalv{A}{(\nablas)^2 k}.
\end{aligned}
\end{equation}
On the other hand, one has 
\begin{equation}\label{eqC}
 \Deltas |A|^2 = 2 \scalv{\Deltas A}{A} + 2 |\nablas A|^2 = 2g^{pq} g^{mn} h_{pm} \Deltas h_{qn} +  2 |\nablas A|^2.
\end{equation}
so that \eqref{evoln2FF} follows from \eqref{eqC} and \eqref{eqB}.
\end{proof}



\subsection{The Monotonicity Formula}

We extend Huisken's monotonicity formula \cite{huisken89} 
to the forced mean curvature flow \eqref{evolg}
(see also \cite[Section 2.2]{cesnovval11}). 

Given a vector field $\omega:M_t\to \R^{d+1}$, we let
\[
\omega^\perp = \scal{\omega}{\nu}\nu,\qquad
\omega^T=\omega-\omega^\perp\,.
\]
Letting $X_0\in\R^{d+1}$ and $t_0\in \R$,
for $(X,t) \in \R^{d+1}\times [t_0,+\infty)$ we define the kernel
 $$
\rho(X,t)=\frac{1}{(4\pi(t_0-t))^{d/2}} \exp\left( \frac{-|X_0-X|^2}{4(t_0-t)}\right).
$$ 
A direct computation gives
\begin{equation}\frac{d \rho}{d t} = - \Deltas \rho + \rho \left( \frac{\scal{X_0-X}{(H+k) \nu}}{t_0-t} - \frac{|(X_0-X)^\perp |^2}{4(t_0-t)^2} \right).\label{kernel}\end{equation}

\begin{prop}[Monotonicty Formula]
$$
\frac{d}{dt} \int_{M_t} \rho = - \int_{M_t} \rho \left( \left \vert H + \frac{k}{2} + \frac{\scal{X-X_0}{\nu}}{2(t_0-t)} \right \vert^2 - \frac{k^2}{4}\right).
$$ 
\end{prop}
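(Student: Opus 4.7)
The strategy mirrors Huisken's classical derivation, adapted to track the extra forcing term $k$. I would start by differentiating along the flow, using \eqref{eqmeas} for the evolution of the volume element, to obtain
\[
\frac{d}{dt}\int_{M_t}\rho\,d\mu_t = \int_{M_t}\left(\frac{d\rho}{dt} - \rho\,H(H+k)\right)d\mu_t,
\]
and then substitute the kernel identity \eqref{kernel} to rewrite the integrand as
\[
-\Deltas\rho + \rho\left(\frac{\scal{X_0-X}{(H+k)\nu}}{t_0-t} - \frac{|(X_0-X)^\perp|^2}{4(t_0-t)^2} - H(H+k)\right).
\]

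Next I would integrate by parts to kill the $-\Deltas\rho$ term. Since $M$ is complete without boundary and $\rho$ decays as a Gaussian in the ambient direction, this contribution vanishes: on compact $M$ it is immediate, and in the non-compact case a standard Gaussian cutoff argument based on the $C^{1,1}$ control of $M_t$ assumed throughout the paper suffices. After this step the evolution is reduced to the integral against $\rho$ of a purely pointwise expression in $H$, $k$, $\nu$ and $X - X_0$.

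Finally, it remains to verify the algebraic identity that this pointwise expression equals
\[
-\Bigl(H + \tfrac{k}{2} + \tfrac{\scal{X-X_0}{\nu}}{2(t_0-t)}\Bigr)^{2} + \tfrac{k^{2}}{4}.
\]
This is a direct completion of the square: one writes $H^{2}+Hk = (H+k/2)^{2}-k^{2}/4$, combines the cross term linear in $\scal{X-X_0}{\nu}/(2(t_0-t))$ with $2(H+k/2)$, and absorbs $|(X-X_0)^{\perp}|^{2}/(4(t_0-t)^{2})$ as the squared projection term, so that the three pieces recombine into a single perfect square plus the $+k^{2}/4$ remainder.

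The main item requiring care is the integration by parts step in the non-compact setting, since the rest of the derivation is algebra; if $k \equiv 0$ it reduces verbatim to Huisken's original monotonicity formula, and the forcing only contributes one extra linear term (producing the cross term $Hk$) and one extra quadratic term (producing the $-k^{2}/4$ remainder on completing the square).
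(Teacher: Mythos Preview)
Your approach is correct and very close in spirit to the paper's, but the integration-by-parts step is packaged differently. You invoke the kernel identity \eqref{kernel} and then integrate $-\Deltas\rho$ to zero, after which the result is a purely pointwise completion of the square. The paper instead computes $\tfrac{d\rho}{dt}$ directly from the explicit Gaussian formula (obtaining the full $|X-X_0|^2$ term rather than only the normal part) and then applies the first variation formula $\int_{M_t}\div_{M_t}\mathbf{Y}=\int_{M_t}\scalv{H\nu}{\mathbf{Y}}$ with the specific vector field $\mathbf{Y}=\rho(X-X_0)/(2(t-t_0))$; this converts the leftover $\tfrac{d}{2(t_0-t)}$ term and an $H\scal{X-X_0}{\nu}$ cross term into the tangential piece $|(X-X_0)^T|^2/(4(t_0-t)^2)$, after which a vector-valued square collapses to the stated scalar one. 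The two routes are equivalent---your $\int\Deltas\rho=0$ is exactly the first variation identity in disguise---and in fact the paper follows your route when it derives the weighted version \eqref{monotonf} immediately afterward. Your approach has the advantage that, once \eqref{kernel} is granted, the remaining algebra is pointwise; the paper's route avoids quoting \eqref{kernel} but needs the explicit choice of $\mathbf{Y}$.
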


\begin{proof}
Recalling \eqref{eqmeas}, we compute

 \begin{align*}
  \frac{d}{dt} \int_{M_t} \rho &= \int_{M_t} \frac{d}{dt}\rho - H(H+k) \rho \\
  &= \int_{M_t} \rho \left( - \frac{|X-X_0|^2}{4(t_0-t)^2} + \frac{d}{2(t_0-t)} - \frac{\scal{X-X_0}{\nu}}{2(t_0-t)} (H+k) - H(H+k) \right) \\
  &= - \int_{M_t} \rho \left( \left \vert H \nu+ \frac{X-X_0}{2(t_0-t)} 
+ \frac{k\nu}{2} \right \vert^2 - \frac{k^2}{4} \right) 
+ \int_{M_t} \frac{d}{2(t_0-t)} \rho 
+ \int_{M_t} \rho \frac{\scal{X-X_0}{\nu} H}{2(t_0-t)} \\
\end{align*}
We use the first variation formula: for all vector field $\mathbf Y$ on $M_t$, we have
$$\int_{M_t} \div_{M_t} \mathbf Y = \int_{M_t} \scalv{H \nu}{\mathbf Y}.$$
As a result, with $\mathbf Y = \frac{\rho (X-X_0)}{2(t-t_0)}$, we get
\begin{align*}
  \frac{d}{dt}  \int_{M_t} \rho &= - \int_{M_t} \rho \left( \left \vert H \nu +\frac{X-X_0}{2(t_0-t)} 
+ \frac{k\nu}{2} \right \vert^2 - \frac{k^2}{4} 
- \frac{|(X-X_0)^T |^2}{4(t_0-t)^2} \right) \\
  &= -\int_{M_t} \rho \left( \left \vert H +\frac{\scal{X-X_0}{\nu}}{2(t_0-t)} + \frac{k}{2} \right \vert^2 - \frac{k^2}{4} \right).
 \end{align*}
\end{proof}

In a similar way (see \cite{ecker89}) one can prove that for all functions $f(X,t)$ defined on $M_t$, one has
\begin{equation}
\partial_t \int_{M_t} \rho f = \int_{M_t} \left( \frac{d f}{dt} - \Deltas f\right) \rho - \int_{M_t} f\rho \left(\left \vert H + \frac{\scal{X-X_0}{\nu}}{2(t_0-t)} 
+ \frac{k}{2} \right \vert^2- \frac{k^2}{4}\right).
 \label{monotonf}
\end{equation}
Indeed, using \eqref{kernel}
\begin{align*}
 \frac{d}{dt} \int_{M_t} \rho f &= \int_{M_t} f \frac{d\rho}{dt} + \frac{df}{dt} \rho - H(H+k) f\rho \\
 &= \int_{M_t} f\left( \frac{d\rho}{dt}  - H(H+k) \rho \right) + \frac{df}{dt} \rho \\
 &= \int_{M_t} f\left( - \Deltas \rho + \rho \left( \frac{\scal{X_0-X}{(H+k) \nu}}{t_0-t} - \frac 14 \frac{|(X_0-X)^\perp |^2}{(t_0-t)^2} \right) - H(H+k) \rho \right) + \frac{df}{dt} \rho \\
 &= \int_{M_t} -\Deltas f \rho +\left( \rho \left( \frac{\scal{X_0-X}{(H+k) \nu}}{t_0-t} - \frac 14 \frac{|(X_0-X)^\perp |^2}{(t_0-t)^2} \right) - H(H+k) \rho \right) + \frac{df}{dt} \rho \\
 &= \int_{M_t} \rho\left( \frac{d}{dt} f -\Deltas f \right) 
- \int f \rho \left( \left \vert  H + \frac{\scal{X-X_0}{\nu}}{2(t_0-t)} 
+ \frac{k}{2} \right \vert^2 - \frac{k^2}{4} \right).
\end{align*}

\begin{lem}
 Let $f$ be defined on $M_t$ and satisfy
 \begin{equation}\label{eqass}
\frac{d}{dt} f - \Deltas f \ls a \cdot \nablas f \quad \text{on }M_t
\end{equation}
 for some vector field $a$ bounded on $[0,t_1]$. Then, 
 $$\sup_{M_t, \, t\in [0,t_1]} f \ls \sup_{M_0} f.$$
 \label{lemh}
\end{lem}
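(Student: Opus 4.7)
The plan is to run the classical parabolic maximum principle argument on the evolving manifold $M_t$, with a small linear-in-time perturbation to upgrade the inequality \eqref{eqass} to a strict one at an interior maximum point.

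Concretely, I would set $f_\eps (p,t) := f(p,t) - \eps t$ for a small parameter $\eps>0$. A direct computation using \eqref{eqass} gives
\[
\frac{d}{dt} f_\eps - \Deltas f_\eps \;=\; \frac{d}{dt} f - \Deltas f - \eps \;\ls\; a\cdot \nablas f - \eps \;=\; a\cdot \nablas f_\eps - \eps
\qquad \text{on } M_t,\ t\in[0,t_1].
\]
Assume for contradiction that $\sup_{M_t,\,t\in[0,t_1]} f_\eps > \sup_{M_0} f$. Under the standing assumption that $M_t$ is compact (or, in the applications used in the paper, a periodic graph, hence a compact hypersurface after passing to the torus quotient), this supremum is attained at some point $(p_0,t_0)$ with $t_0\in (0,t_1]$. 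At such an interior maximum one has $\nablas f_\eps(p_0,t_0)=0$, $\Deltas f_\eps(p_0,t_0)\ls 0$, and $\frac{d}{dt}f_\eps(p_0,t_0)\gs 0$, so the previous inequality forces $0 \ls -\eps$, a contradiction. Hence $\sup f_\eps \ls \sup_{M_0} f$ on $M_t\times[0,t_1]$, and letting $\eps\to 0$ yields the claim.

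The only genuinely delicate point is the existence of the maximizer $(p_0,t_0)$. For compact $M$ this is immediate; for the non-compact settings later invoked in this paper ($\R^d$-periodic graphs), one either reduces to the compact torus by periodicity, or, if $f$ is only assumed bounded, one multiplies by a suitable spatial cut-off and sends the cut-off parameter to infinity, controlling the extra gradient terms against $a$. Apart from this compactness/decay input, the argument is a standard perturbed maximum principle on a Riemannian manifold and requires no further ingredients beyond the evolution identities of the previous subsection.
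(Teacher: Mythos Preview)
Your argument is correct in the compact case and you correctly flag the delicate point, but the paper takes a genuinely different route precisely to avoid the compactness issue. Rather than a pointwise maximum principle with an $\eps t$ perturbation, the authors work with the truncated function $f_l=\max(f-l,0)$ (with $l=\sup_{M_0}f$), derive $(\frac{d}{dt}-\Deltas)f_l^2\ls \frac12 a_0^2 f_l^2$ via Young's inequality, and then integrate against the Gaussian kernel $\rho$ using the weighted monotonicity formula \eqref{monotonf}. This yields a Gronwall inequality for $\int_{M_t} f_l^2\rho$, which vanishes at $t=0$ and hence for all $t$. The Gaussian weight is what makes the integral finite on non-compact $M_t$ (entire graphs over $\R^d$ in the application to Proposition~\ref{timebound}), so no separate cut-off or periodicity reduction is needed. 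Your approach is more elementary and perfectly adequate when $M_t$ is compact (or periodic, by passing to the torus), but the integral/monotonicity method is the standard Ecker--Huisken device for getting such maximum principles on unbounded hypersurfaces in one clean stroke, and that is why the authors developed \eqref{monotonf} just before stating the lemma.
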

\begin{proof}
 Denote by $a_0$ the bound on $a$, $k:=\sup_{M_0} f$ and define $f_l = \max (f-l,0).$ Assumption \eqref{eqass} implies
 $$\left( \frac{d}{dt} - \Deltas \right) f_l^2 \ls 2 f_l a \cdot \nablas f_l - 2 |\nablas f_l |^2$$ which, thanks to Young's inequality, gives
 $$\left( \frac{d}{dt} - \Deltas \right) f_l^2 \ls \frac{1}{2} a_0^2 f_l^2.$$
 Applying \eqref{monotonf} to $f_l^2$, we get
\begin{equation}\label{eqfk}
\frac{d}{dt} \int f_l^2 \rho \ls \frac{1}{2} (a_0^2+\Vert k \Vert_\infty^2)  \int f_l^2 \rho. 
\end{equation}
Letting $l=\sup_{M_0}f$, so that $f_l\equiv 0$ on $M_0$, 
from \eqref{eqfk} and the Gronwall's Lemma we obtain 
that $f_l\equiv 0$ on $M_t$ for all $t\in (0,t_1]$, 
which gives thesis.
\end{proof}

\section{Proof of Theorem \ref{main}}\label{geometric}

We now prove short time existence  
for the mean curvature flow with obstacles \eqref{eqmo}, \eqref{eqcon}. 
Let $M_0=\partial E(0)\subset U$, where we assume that $U$, $E(0)$ 
are open sets with boundary uniformly of class $C^{1,1}$, with
${\rm dist}(M_0,\partial U)>0$.
In particular, $M_0$ satisfies a uniform exterior and 
interior ball condition, that is, there is $R>0$ 
such that, for every $x \in M_0$, 
one can find two open balls $B^+$ and $B^-$ of radius $R$ 
which are tangent to $M_0$ at $x$ and such that $B^+ \subset E(0)^c$ and 
$B^- \subset E(0).$  
Let also $\Omega^-:=E(0)\setminus \overline U$, 
and $\Omega^+:=E(0)^c\setminus(M_0\cup\overline U)$. 
Notice that $\Omega^\pm$ are 
open sets with $C^{1,1}$ boundaries, with ${\rm dist}(\Omega^-,\Omega^+)>0$. 
Let also $$k := 2N( \chi_{\Omega^-} - \chi_{\Omega^+}) $$
where $N$ is bigger than ($d$ times) the mean curvature of $\partial U.$

We want to show that equation \eqref{evolg}, with $k$ as above, 
has a solution in an interval $[0,T).$ 
To this purpose, letting
$\rho_\varepsilon$ be a standard mollifier supported in the ball
of radius $\varepsilon$ centered at $0$, 
we introduce a smooth regularization $\gep = k \ast \rho_\varepsilon$ of $k$. 
Notice that $\Vert \gep \Vert_\infty = 2N$, 
$\gep(x) = 2N$ (resp. $\gep(x) = -2N$) at every $x\in \Omega^-$ 
(resp. $x\in \Omega^-$) such that ${\rm dist}(x,\partial U)\ge\varepsilon$,
and $\gep(x) = 0$ at every $x\in U$ such that 
${\rm dist}(x,\partial U)\ge\varepsilon$.


Using standard arguments (see for instance \cite[Theorem 4.1]{ecker91b} and \cite[Prop. 4.1]{ecker91}) 
one can show existence of a smooth solution $M^\varepsilon_t$ of \eqref{evolg},
with $k$ replaced by $\gep$, on a maximal time interval $[0,T_\varepsilon).$ 

Let now
\[
\Om^\pm_\varepsilon := \{x\in\Om^\pm:\ {\rm dist}(x,U)>\varepsilon\}.
\]
The following result follows directly from the definition of $\gep$.

\begin{prop}
The hypersurfaces $\partial \Omega^\pm_\varepsilon$ are respectively a super and a subsolution of \eqref{evolg}, with $k$ replaced with $k_\varepsilon$. 
In particular, by the parabolic comparison principle $M^\varepsilon_t$ cannot intersect $\partial \Omega^\pm_\varepsilon$.
\label{propob}
\end{prop}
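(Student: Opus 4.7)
The plan is to check, directly from the definition of $k_\varepsilon$ and the geometric structure of $\partial\Omega^\pm_\varepsilon$, that these stationary hypersurfaces satisfy the expected mean-curvature inequality of a super- and of a subsolution of \eqref{evolg} (with $k$ replaced by $k_\varepsilon$); the non-crossing assertion will then be a direct application of the parabolic comparison principle for smooth mean curvature flow with a bounded forcing term.

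Since $\mathrm{dist}(M_0,\partial U)>0$ and $M_0\subset U$, both $\Omega^\pm$ lie in $\R^{d+1}\setminus\overline U$ and the relevant part of their topological boundary is contained in $\partial U$. Consequently $\partial\Omega^\pm_\varepsilon$ is contained in the $\varepsilon$-level set $\{d_U=\varepsilon\}$ of the signed distance $d_U$ to $\partial U$, intersected with $\overline{\Omega^\pm}$. The parallel surface formula
\[
\Delta d_U(x)=\sum_{i=1}^{d}\frac{\kappa_i(y)}{1+\varepsilon\,\kappa_i(y)},
\]
with $y$ the nearest point of $\partial U$ to $x$ and $\kappa_i(y)$ its principal curvatures, then yields $|\Delta d_U(x)|<2N$ on $\partial\Omega^\pm_\varepsilon$ as soon as $\varepsilon$ is small enough, thanks to the hypothesis $N>\|H_{\partial U}\|_\infty$ and the $C^{1,1}$ regularity of $\partial U$. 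Moreover, since every point of $\partial\Omega^\pm_\varepsilon$ lies at distance exactly $\varepsilon$ from $\partial U$ on the $\Omega^\pm$ side, the definition of $k_\varepsilon$ forces $k_\varepsilon\equiv+2N$ on $\partial\Omega^-_\varepsilon$ and $k_\varepsilon\equiv-2N$ on $\partial\Omega^+_\varepsilon$. The outward normal of $\Omega^\pm_\varepsilon$ at such a boundary point is $-\nabla d_U$, so the corresponding mean curvature equals $-\Delta d_U$, and combining everything we obtain
\[
H_{\partial\Omega^-_\varepsilon}+k_\varepsilon=-\Delta d_U+2N>0,\qquad H_{\partial\Omega^+_\varepsilon}+k_\varepsilon=-\Delta d_U-2N<0,
\]
which are precisely the supersolution and subsolution conditions for \eqref{evolg}.

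For the non-intersection claim, I would argue by a first-contact analysis at some $t_0\in(0,T_\varepsilon)$: $M_0\subset U$ is initially disjoint from $\Omega^\pm_\varepsilon\subset\R^{d+1}\setminus\overline U$, and if $M^\varepsilon_{t_0}$ first touched $\partial\Omega^\pm_\varepsilon$ at a point $x_0$, the two smooth tangent hypersurfaces would lie on one side of each other locally and would therefore satisfy the standard one-sided inequality on their mean curvatures at $x_0$; inserting this into the flow equation $v_\nu=-(H+k_\varepsilon)$ contradicts the strict sign of $H+k_\varepsilon$ on the barriers obtained above. The main delicate point is to keep the sign conventions consistent at the tangent contact (which surface lies on which side, and hence which inequality on the mean curvatures is available), but this is routine given the uniform $C^{1,1}$ regularity of both $\partial\Omega^\pm_\varepsilon$ and $M^\varepsilon_t$ and the freedom to restrict to sufficiently small $\varepsilon$ in the parallel surface computation.
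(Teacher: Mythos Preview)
Your elaboration follows exactly the route the paper leaves implicit (``follows directly from the definition of $k_\varepsilon$''): you observe that $k_\varepsilon\equiv\pm 2N$ on $\partial\Omega^\mp_\varepsilon$, bound the mean curvature of the parallel surface via the distance function, obtain a strict sign for $H+k_\varepsilon$, and conclude by a first-contact comparison. That is the intended argument, and the paper supplies no further detail to compare against.

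There is, however, a genuine gap hidden in the sentence you declare ``routine''. With the paper's choice $k=2N(\chi_{\Omega^-}-\chi_{\Omega^+})$ your computation gives $H_{\partial\Omega^-_\varepsilon}+k_\varepsilon>0$, i.e.\ the normal speed $-(H+k_\varepsilon)$ on $\partial\Omega^-_\varepsilon$ points \emph{into} $\Omega^-_\varepsilon$. The first-contact argument then produces no contradiction on this side: in the radially symmetric model $U=B_2\setminus\overline{B_1}$, $E(0)=B_{1.5}$, one has $\Omega^-_\varepsilon=B_{1-\varepsilon}$, $k_\varepsilon\equiv 2N$ there, and the radius obeys $\dot R=-(d/R+k_\varepsilon)<0$ at $R=1-\varepsilon$, so $M^\varepsilon_t$ passes straight through $\partial\Omega^-_\varepsilon$. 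On the $\Omega^+$ side you must also be careful that the normal relevant to the flow at a contact point is the outer normal to $E^\varepsilon(t)$, which is $+\nabla d_U$ (not the outward normal $-\nabla d_U$ to $\Omega^+_\varepsilon$); redoing the inequality with that normal again fails to contradict $v_\nu\ge 0$.

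The barrier property does hold once the sign of $k$ is reversed, i.e.\ $k=2N(\chi_{\Omega^+}-\chi_{\Omega^-})$, which is also the sign that matches the graph-case penalisation of Section~4 (there the forcing pushes the graph \emph{back} toward the admissible region). So the underlying issue is a sign slip in the paper's definition of $k$; your proof inherits it, and the place you should have caught it is exactly the step you waved through. Note finally that your labelling of which barrier is ``super'' and which is ``sub'' is swapped relative to the paper's ordering of $\partial\Omega^\pm_\varepsilon$.
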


We will show that we can find a time $T >0$ 
such that for every $\varepsilon,$ there exists 
a smooth solution of \eqref{evolg} (with $k$ replaced with $k_\varepsilon$) on $[0,T).$

The following result will be useful in the sequel. We omit the proof which is
a simple ODE argument.

\begin{lem}\label{lemball}
Let $M_0=\partial B_R(x_0)$ be a ball of radius $R\le 1$ centered at $x_0$. Then, the
evolution $M_t$ by \eqref{evolg}, with constant forcing term $k=2N$, is given by
$M_t=B_{R(t)}(x_0)$ with $R(t) \gs \sqrt{R^2 -(4N+2d)t}$. In particular, 
the solution exists at least on $\left[0,\frac{R^2}{4N+2d}\right).$
\end{lem}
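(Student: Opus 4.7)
The plan is to exploit the rotational symmetry of both the initial datum and the constant forcing term $k=2N$: by uniqueness of smooth solutions of \eqref{evolg} with smooth forcing, the evolution must remain a ball centered at $x_0$ as long as it is smooth, so the PDE reduces to a single ODE for its radius $R(t)$.

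For a sphere of radius $R$ in $\R^{d+1}$ with outward-pointing normal, in the convention where $H$ equals $d$ times the standard mean curvature (positive for convex bodies), one has $H = d/R$. Inserting this together with $k=2N$ into \eqref{evolg} and taking the radial component gives
\begin{equation*}
R'(t) = -\frac{d}{R(t)} - 2N.
\end{equation*}
In particular the right-hand side is strictly negative, so $R$ is decreasing and $R(t) \ls R \ls 1$ on the interval of existence.

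To obtain the claimed lower bound I would pass to $R^2$, where the singular $d/R$ term becomes constant:
\begin{equation*}
\frac{d}{dt} R(t)^2 \,=\, 2 R(t) R'(t) \,=\, -2d - 4 N R(t).
\end{equation*}
Here the assumption $R\ls 1$ enters crucially, since combined with monotonicity it gives $R(t)\ls 1$, and hence $-4NR(t) \gs -4N$. Therefore
\begin{equation*}
\frac{d}{dt} R(t)^2 \,\gs\, -(2d+4N),
\end{equation*}
and integrating from $0$ to $t$ yields $R(t)^2 \gs R^2 - (4N+2d)\,t$, which is the desired bound. The existence interval then follows immediately: the ODE admits a classical smooth solution as long as $R(t) > 0$, which the lower bound guarantees for all $t < R^2/(4N+2d)$.

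There is no real obstacle in this argument; the only small point worth checking is the preservation of radial symmetry, which is standard for smooth mean curvature flow with smooth, constant forcing (before any singularity) and follows from uniqueness of the parametric flow \eqref{evolg} together with invariance of the equation under rotations about $x_0$.
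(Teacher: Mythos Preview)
Your argument is correct and is exactly the ``simple ODE argument'' the paper alludes to (the paper omits the proof entirely). The reduction to $R'(t)=-d/R(t)-2N$ and the passage to $(R^2)'$ with the bound $R(t)\ls 1$ is the natural way to obtain the stated inequality.
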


\begin{prop}\label{propballs}
There exists $r>0$, a collection of balls $B_i=B_r(x_i)$ 
of radius $r$, and a positive time $T_0$ such that 
$M_t^\varepsilon \subset \bigcup_i B_i$ for every $t \in [0,\min(T_0,T_\varepsilon))$.
In addition, we can choose the balls $B_i$ in such a way that, for every $i$, 
there exists $\omega_i\in\R^{d+1}$ such that $\partial \Omega^\pm \cap B_{4r}(x_i)$ are graphs of some functions $\psi_i^\pm : \R^d \to \R \cup \{\pm \infty\}$ over $\omega_i^\perp$.\\
In particular, one has $$\scal{\nabla k_\varepsilon}{\omega_i} 
\gs |\nabla k_\varepsilon|/2 \quad \text{on } B_{2r}(x_i).$$
\end{prop}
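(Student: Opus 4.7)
The plan is to use the ball evolution from Lemma \ref{lemball} as inner and outer barriers, via the parabolic comparison principle, to confine $M_t^\varepsilon$ to a thin tubular neighborhood of $M_0$ for short time, and then exploit ${\rm dist}(M_0,\partial U) > 0$ to reduce both the graph and the gradient conditions to trivialities. Let $R_0 > 0$ be a uniform interior/exterior ball radius for $M_0$, set $d_0 := {\rm dist}(M_0,\partial U) > 0$, and fix $r > 0$ with $r < \min(d_0/10,\, R_0/2)$ and $\varepsilon \leq r$. I then take an $(r/4)$-net $\{x_i\}$ on $M_0$ and set $B_i := B_r(x_i)$.

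To cover $M_t^\varepsilon$, I argue as follows. At each $y \in M_0$ the uniform ball condition provides tangent balls of radius $R_0$ on either side of $M_0$. Since $|k_\varepsilon| \leq 2N$, these tangent balls evolved under forced mean curvature flow with constant forcings $\pm 2N$ serve as barriers for $M_t^\varepsilon$, and by Lemma \ref{lemball} their radii stay at least $\sqrt{R_0^2 - (4N+2d)t}$. Choosing $T_0 > 0$ so small that $R_0 - \sqrt{R_0^2 - (4N+2d)T_0} \leq r/4$ then forces $M_t^\varepsilon \subset \{x \in \R^{d+1} : {\rm dist}(x,M_0) \leq r/4\}$ for every $t \in [0, \min(T_0,T_\varepsilon))$; since $\{x_i\}$ is an $(r/4)$-net on $M_0$, this yields $M_t^\varepsilon \subset \bigcup_i B_i$.

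For the remaining assertions, each ball $B_{4r}(x_i)$ is connected, meets $U$ at $x_i \in M_0 \subset U$, and is disjoint from $\partial U$ because $4r < d_0$, so $B_{4r}(x_i) \subset U$. Consequently $\Omega^\pm \cap B_{4r}(x_i) = \emptyset$ and $\partial \Omega^\pm \cap B_{4r}(x_i) = \emptyset$, so any choice of $\omega_i$ (say a fixed unit vector) realizes these empty sets as graphs with $\psi_i^\pm \equiv \pm\infty$. Moreover $k \equiv 0$ on $U$, and since $\varepsilon \leq r$ the support of $\rho_\varepsilon(x-\cdot)$ is contained in $B_{3r}(x_i) \subset U$ for every $x \in B_{2r}(x_i)$, so $k_\varepsilon \equiv 0$ on $B_{2r}(x_i)$ and the gradient inequality collapses to $0 \geq 0$. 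The only delicate step is the barrier argument: at any potential first contact point between $M_t^\varepsilon$ and one of the shrinking tangent balls, one must align outer normals and combine the mean curvature comparison with the bound $|k_\varepsilon| \leq 2N$ to conclude the correct velocity ordering.
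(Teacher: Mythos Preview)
Your barrier argument via interior/exterior tangent balls and Lemma~\ref{lemball} is essentially the paper's: both confine $M_t^\varepsilon$ to a tubular neighbourhood $\{d_{M_0}\le\delta\}$ for short time by comparison with shrinking spheres carrying the extremal forcing. Where you diverge is in the second half. You exploit the standing hypothesis $d_0:={\rm dist}(M_0,\partial U)>0$ to force $4r<d_0$, so that every $B_{4r}(x_i)\subset U$; since $\Omega^\pm\subset U^c$ this makes $\partial\Omega^\pm\cap B_{4r}(x_i)=\emptyset$ and (for $\varepsilon\le r$) $k_\varepsilon\equiv 0$ on $B_{2r}(x_i)$, so the graph and gradient assertions are vacuous. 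The paper instead allows $B_{4r}(x_i)$ to meet $\partial\Omega^\pm$, chooses $\omega_i$ so that $\scal{\omega_i}{\nu^\pm}\ge 1/2$ there, and deduces the gradient inequality genuinely from $\scal{Dk}{\omega_i}\ge |Dk|/2$ and mollification. Your shortcut is valid for the proposition as stated and is shorter; the paper's argument is the one that would survive if one tried to restart the flow at a time when the surface is close to the obstacle, and it does not tie $T_1=r^2/(2d+\Lambda)$ to $d_0$.

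One point you should fix even though it does not affect the proposition as written: the vector $\omega_i$ is used in Proposition~\ref{nolossemb} through $v=\scal{\nu}{\omega_i}^{-1}$, and the maximum-principle argument there needs $v$ bounded on $M_0\cap B_{2r}(x_i)$ at $t=0$. Your choice ``say a fixed unit vector'' would make $v$ blow up. Choose instead $\omega_i=\nu_{M_0}(x_i)$ (and, using the $C^{1,1}$ regularity of $M_0$, take $r$ small enough that $\scal{\nu_{M_0}}{\omega_i}\ge 1/2$ on $M_0\cap B_{2r}(x_i)$); this costs nothing in your setup since the gradient inequality is vacuous anyway.
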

Most of these notations are summarized in Figure \ref{figbal}.
\begin{figure}
\centering
\includegraphics{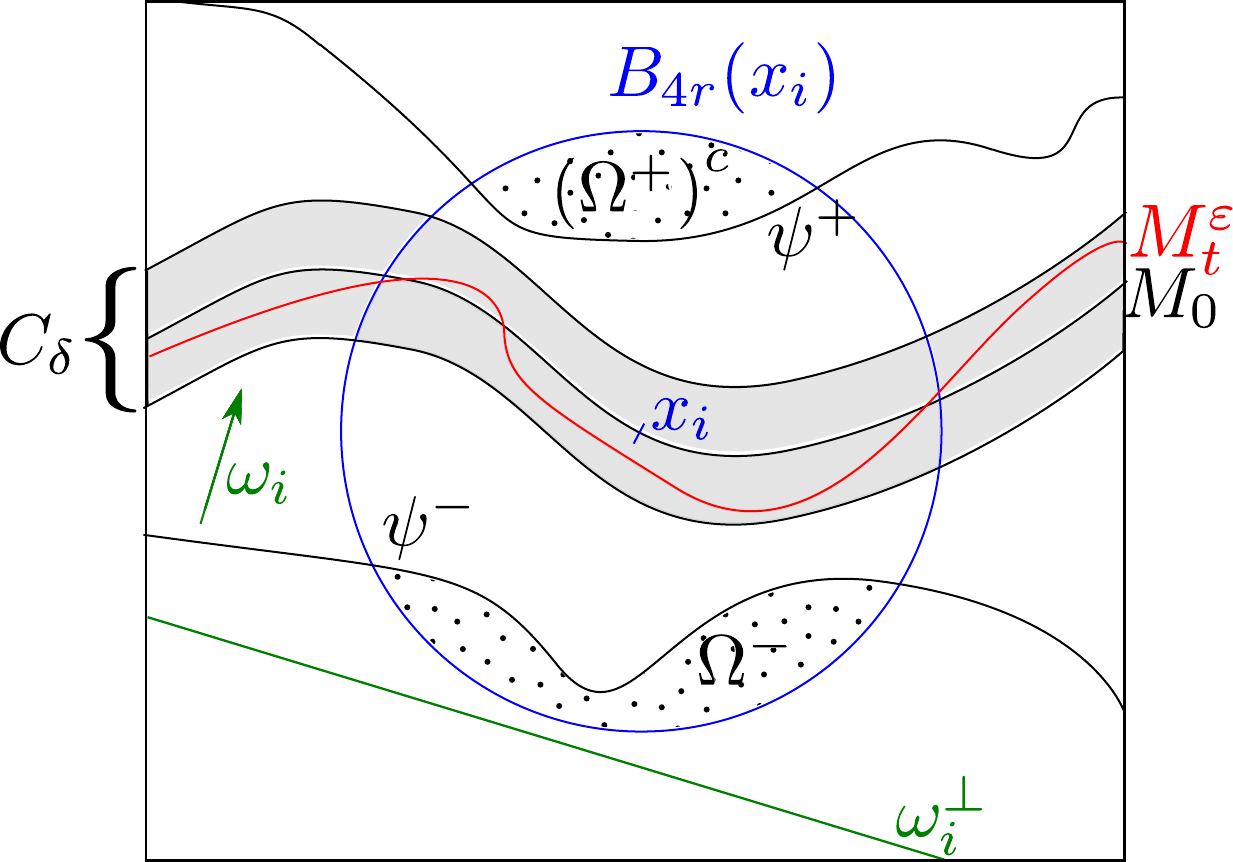}
\caption{Main notations of Proposition \ref{propballs}.}
\label{figbal}
\end{figure}

\begin{proof}
By assumption, for every $\x \in M_0$ there exist interior 
and exterior balls $B_x^\pm$ of fixed radius $R\le 1$. 
Let $B_x^\pm(t)$ be the evolution of $B_x^\pm$ by \eqref{evolg} with forcing term $k=2N$.
By comparison, for every $t \in [0,T_\varepsilon)$, 
$B^+_x(t) \subset \Omega(t)^c$ and $B^-_x(t) \subset \overline \Omega(t).$ 
Recalling Lemma \ref{lemball}, there exists $\delta >0$ and $T_0 >0$, independent of $\varepsilon$, such that 
$M_t \subset \{d_{M_0} \ls \delta\}=: C_\delta$,
for all $t\in [0,\min(T_\varepsilon, T_0))$. \\
We eventually reduce $\delta, T_0$ such that $C_\delta$ can be covered with a collection of balls $B_i=B_r(x_i)$, 
centered at $x_i\in M_0$ and with a radius $r$ such that, for every $i$, there exists a unit vector $\omega_i\in\R^{d+1}$ satisfying
$$\scal{\omega_i}{\nu^+(x)} \gs \frac{1}{2} \quad \text{ and } \quad \scal{\omega_i}{\nu^-(y)} \gs \frac 12$$
for every $x \in \partial \Omega^+ \cap B_{4r}(x_i)$ and $y \in \partial \Omega^-\cap B_{4r}(x_i)$, where $\nu^\pm$ is the outer normal to $\Omega^\pm.$

As a result, $\partial \Omega^\pm \cap B_{4r}(x_i)$ are graphs of some functions $\psi_i^\pm : \R^d \to \R \cup \{\pm \infty\}$ over $\omega_i^\perp$ (see Figure \ref{figbal}).

Notice also that $k$ is a $BV$ function and $Dk$ is a Radon measure concentrated on $\partial U$ such that 
 $$\scal{D k}{\omega_i} \gs \frac{|D k|}{2} \text{ on }B_{4r}(x_i).$$

Then, for every $x\in B_{2r}(x_i)$ and $\eps$ sufficiently small (such that $\rho_\eps(x) = 0$ as soon as $|x| \gs 2r$), we have
\begin{align*}
\scal{\nabla k_\varepsilon}{\omega_i} & = 
\scal{\nabla \int_{\R^{d+1}} k(x-y) \rho_\varepsilon(y) dy}{\omega_i} \\
&=\int_{\R^{d+1}} \scal{D k(x-y)}{\omega_i}  \rho_\varepsilon(y) dy \\
& \gs \int_{\R^{d+1}} \frac{|D k| (x-y)}{2} \rho_\varepsilon(y) dy \\
& \gs \frac{|D k| \ast \rho_\varepsilon}{2} \gs \frac{|\nabla k_\varepsilon|}{2}.
\end{align*}
\end{proof}

In what follows, we will control the geometric quantities of $M_t^\varepsilon$ 
inside each ball $B_i.$ As in \cite{ecker91}, we introduce a localization function $\phi_i$ as follows:
let $\eta_i(x,t)= |x-x_i|^2 +(2d+\Lambda)t$ ($\Lambda$ be a positive constant that will be fixed later) and, for $R=2r$, $\phi_i(x,t)=(R^2-\eta_i(x,t))^+.$ 
We denote by $\phi_i$ the quantity $\phi_i(\x,t)$, where $\x=\x(p,t)$ will be a generic point in $M_t$. 
Notice that there exists $T_1 =\frac{r^2}{2d+\Lambda}$ such that for all $t \in [0,\min(T_1,T_\varepsilon))$, 
\begin{equation}
 M_t^\varepsilon \subset \bigcup_i \{\phi_i > r^2 \}.
\label{cover}
\end{equation}

As a result, we have the following
\begin{lem}
Let $f$ be a smooth function defined on $M_t^\varepsilon$. Assume that there is a $C >0$ such that 
$$ 
\phi_i f \ls C \text{ on }M_t^\varepsilon \qquad
 \forall t \ls \min(T_\varepsilon, T_1) \text{ and } \forall i \in \mathbb N .
$$
Then, 
$$
f \ls  \alpha C \text{ on }M_t^\varepsilon \qquad
\forall t \ls \min(T_\varepsilon,T_1),$$
where $\alpha$ depends only on the $C^{1,1}$ norm of $M_0$.
\label{lemloc}
\end{lem}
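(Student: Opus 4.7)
The plan is to reduce the statement to the covering property \eqref{cover}, which is the only nontrivial ingredient. Fix any $t\le\min(T_\eps,T_1)$ and any $\x\in M_t^\eps$. By \eqref{cover}, I can pick an index $i=i(\x,t)$ with $\phi_i(\x,t)>r^2$. Evaluating the hypothesis at this $(\x,t)$ and this $i$ gives $\phi_i(\x,t)\,f(\x)\le C$, so if $f(\x)\le 0$ the claimed bound is automatic, while if $f(\x)>0$ I simply divide both sides by $\phi_i(\x,t)>r^2>0$ to obtain $f(\x)<C/r^2$. Setting $\alpha:=1/r^2$ therefore yields $f\le\alpha C$ uniformly on $M_t^\eps$ for all admissible $t$.

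It remains to check the claimed qualitative dependence of $\alpha$. The radius $r$ was produced in Proposition \ref{propballs}, and its size is determined by the uniform interior/exterior ball radius $R$ of $M_0$ and by the requirement that $\partial\Om^\pm$ be representable as graphs over a common hyperplane on balls of radius $4r$ around each $x_i\in M_0$. Both of these are quantitative manifestations of the $C^{1,1}$ regularity of $M_0$ (and of $\partial U$, which is part of the data of the problem). Hence $\alpha=1/r^2$ depends only on the $C^{1,1}$ norm of $M_0$, as required. I do not expect any real obstacle here: all the substantive work has been packaged into the construction of the localization functions $\phi_i$ together with Proposition \ref{propballs} and the covering \eqref{cover}, so the lemma is essentially a pointwise extraction argument.
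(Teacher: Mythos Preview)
Your proof is correct and is exactly the intended argument: the paper does not give a separate proof of this lemma at all, presenting it with the words ``As a result, we have the following'' immediately after the covering property \eqref{cover}, so the pointwise extraction you wrote out (pick $i$ with $\phi_i>r^2$ and divide) is precisely what the reader is meant to supply. Your remark that $\alpha=1/r^2$ inherits its dependence from the choice of $r$ in Proposition \ref{propballs} is also the right justification for the stated dependence of $\alpha$.
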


\begin{lem}\label{lemcgrad}
Let $v:= \scal{\nu}{\omega}^{-1}$.
The quantity $v^2 \phi^2$ satisfies
\begin{eqnarray}\nonumber
\left(\frac{d}{dt} - \Deltas \right) \left( \frac{v^2 \phi^2}{2} \right) &\ls&  \frac{1}{2}   \scal{\nablas (v^2 \phi^2)}{\frac{\nablas \phi^2}{\phi^2}} 
\\\label{last}
&&+ \phi^2 v^3 \scal{\nablas k_\varepsilon}{\omega}+ v^2\phi(2 k_\varepsilon \scal \x \nu-\Lambda).
\end{eqnarray}
\end{lem}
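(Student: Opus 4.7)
The plan is to derive \eqref{last} by computing $(d/dt-\Deltas)(v^2\phi^2/2)$ exactly and then reorganizing terms so that everything not in the target right-hand side is manifestly non-positive.

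First I establish the evolution of the two factors. For $v=\scal{\nu}{\omega}^{-1}$, combining \eqref{normalevol} with the Simons-type identity $\Deltas\nu = \nablas H - |A|^2\nu$ yields $(d/dt-\Deltas)\scal{\nu}{\omega} = \scal{\nablas \gep}{\omega} + |A|^2\scal{\nu}{\omega}$, and applying the chain rule to $v=1/\scal{\nu}{\omega}$ gives
\[
\left(\tfrac{d}{dt}-\Deltas\right) v = -v|A|^2 - v^2\scal{\nablas \gep}{\omega} - 2v^{-1}|\nablas v|^2.
\]
For $\phi$, the identity $\Deltas F = -H\nu$ combined with \eqref{evolg} produces $(d/dt-\Deltas)|F-x_i|^2 = -2\gep\scal{F-x_i}{\nu} - 2d$, hence on $\{\phi>0\}$
\[
\left(\tfrac{d}{dt}-\Deltas\right)\phi = 2\gep\scal{F-x_i}{\nu} - \Lambda,
\]
which multiplied by $v^2\phi$ already matches the last term of the target RHS.

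Next I combine the two using the Leibniz rule $(d/dt-\Deltas)(ab) = b(d/dt-\Deltas)a + a(d/dt-\Deltas)b - 2\scal{\nablas a}{\nablas b}$ with $a=v^2/2$, $b=\phi^2$, together with $(d/dt-\Deltas)(u^2) = 2u(d/dt-\Deltas) u - 2|\nablas u|^2$ for $u\in\{v,\phi\}$. The explicit expression that comes out falls into three groups: (i) the geometric part $v^2\phi(2\gep\scal{F-x_i}{\nu}-\Lambda)$ already identified above; (ii) the curvature and forcing-gradient terms $-\phi^2 v^2|A|^2 - \phi^2 v^3 \scal{\nablas \gep}{\omega}$; and (iii) a quadratic form in $(\nablas v,\nablas\phi)$ with coefficients $(-3\phi^2,-4v\phi,-v^2)$.

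It then remains to check the inequality algebraically. A direct expansion shows
\[
\tfrac{1}{2}\scal{\nablas(v^2\phi^2)}{\tfrac{\nablas\phi^2}{\phi^2}} = 2v^2|\nablas\phi|^2 + 2v\phi\,\scal{\nablas v}{\nablas\phi}.
\]
Subtracting the target RHS from the computed LHS, the geometric terms cancel, the coefficient of $\scal{\nablas \gep}{\omega}$ becomes $-2\phi^2 v^3$, and the six gradient cross-terms accumulate with coefficients $(-3\phi^2,-6v\phi,-3v^2)$ and collapse into $-3(\phi\nablas v + v\nablas\phi)^2 = -3|\nablas(v\phi)|^2$. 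What remains is
\[
-\phi^2 v^2|A|^2 - 2\phi^2 v^3\scal{\nablas \gep}{\omega} - 3|\nablas(v\phi)|^2 \ls 0,
\]
the first and third terms manifestly so, and the second by $\scal{\nablas \gep}{\omega}\gs 0$ on $\supp\phi\subset B_{2r}(x_i)$ from Proposition \ref{propballs}. The main obstacle is purely combinatorial bookkeeping: recognizing the perfect square $3|\nablas(v\phi)|^2$ in the gradient cross-terms with the precise coefficients $(3,6,3)$ is the key algebraic identification, after which no Young-type absorption is needed.
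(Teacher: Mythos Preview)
Your evolution computations for $v$ and $\phi$ and the gradient bookkeeping are correct, and the identification of the perfect square $-3|\nablas(v\phi)|^2$ is cleaner than the paper's route (which passes through Young's inequality to absorb the cross term $\scal{\nablas v^2}{\nablas\phi^2}$). There is, however, a genuine error in your last step: you claim $\scal{\nablas \gep}{\omega}\gs 0$ on $\supp\phi$ by Proposition~\ref{propballs}, but that proposition concerns the \emph{ambient} gradient $\nabla \gep$, not the tangential one. Since
\[
\scal{\nablas \gep}{\omega} \;=\; \scal{\nabla \gep}{\omega}-\scal{\nabla \gep}{\nu}\scal{\nu}{\omega},
\]
Proposition~\ref{propballs} only yields $\scal{\nablas \gep}{\omega}\gs \tfrac12|\nabla\gep|-|\nabla\gep|\,\scal{\nu}{\omega}$, which can be negative when $\scal{\nu}{\omega}>\tfrac12$ (equivalently $v<2$). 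The paper handles this point later, in the proof of Proposition~\ref{nolossemb}, where the positivity of $v\scal{\nablas\gep}{\omega}$ is established \emph{only} at points where $v\gs 2$; elsewhere the maximum principle argument is trivial.

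The source of the difficulty is that the sign in the stated inequality~\eqref{last} is a typo: the paper's own computation concludes with $-\,\phi^2 v^3\scal{\nablas\gep}{\omega}$ on the right-hand side (and it is this sign that is used in Proposition~\ref{nolossemb}). With the corrected sign your LHS minus RHS has no $\gep$-term at all, leaving just $-\phi^2v^2|A|^2-3|\nablas(v\phi)|^2\ls 0$, and your argument is then complete and more direct than the paper's.
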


\begin{proof}
In this proof and the proofs further, we use normal coordinates: we assume that $g_{ij} ) \delta_{ij}$ (Kronecker symbol) and that the Christoffel symbols $\Gamma_{ij}^k$ vanish at the computation point.

We expand the derivatives
\begin{eqnarray*}
 \left(\frac{d}{dt} - \Deltas \right) \left( \frac{v^2 \phi^2}{2} \right) & = & v^2 \left(\frac{d}{dt} - \Deltas \right) \frac{\phi^2}{2} +  \phi^2 \left(\frac{d}{dt} - \Deltas \right) \frac{v^2}{2} 
- 2 \scalv{\nablas \frac{\phi^2}{2}}{\nablas \frac{v^2}{2}}.
\end{eqnarray*}
\emph{First term.} We start computing
$$\left(\frac{d}{dt} - \Deltas \right) |\x|^2 =- 2k_\varepsilon \scal \x \nu - 2d.$$
Then,
$$\left(\frac{d}{dt} - \Deltas \right) \phi^2 = 2\phi(2 k_\varepsilon \scal{\x-x_i}{\nu}-\Lambda) - 2 |\nablas |\x|^2|^2.$$
\emph{Second term.} We are interested in
\begin{eqnarray}
 \frac{1}{2} \frac{d}{dt} \scal{\omega}{\nu}^2 &=& \scal{\omega}{\nu} \scal{\frac{d}{dt} \nu}{\omega} \\
 &= & \scal{\omega}{\nu} \scal{\nablas (H+k_\varepsilon)}{\omega}.
\end{eqnarray}
So,
\begin{equation}
 \frac{1}{2} \frac{d}{dt} \scal{\omega}{\nu}^{-2} = - \scal{\omega}{\nu}^{-3} \scal{\nablas (H+k_\varepsilon)}{\omega}.
\end{equation}
On the other hand,

\begin{align}
 \frac 12 \Deltas(\scal{\omega}{\nu}^{-2}) &=\scal{\omega}{\nu}^{-1} \Deltas \scal{\omega}{\nu}^{-1} - \scalv{\nablas \scal{\omega}{\nu}^{-1}}{\nablas \scal{\omega}{\nu}^{-1}}.
\end{align}
Let us note that
\begin{align*}
 \partial_{ij} \nu = \partial_i\left( h_{jl} g^{lm} \partial_m F \right) = \partial_i(h_{jl}) \delta_{lm} \partial_m F - h_{jl} \delta_{lm} (-h_{im} \nu) 
=\partial_i (h_{jl}) \partial_l F - \lambda_i^2 \delta_{ij} \nu.
\end{align*}
We then get
\begin{align}
 \Deltas \scal{\omega}{\nu}^{-1} &= \partial_{ii} \scal{\omega}{\nu}^{-1} = \partial_i \left(- \scal{\omega}{\partial_i \nu} \scal{\omega}{\nu}^{-2} \right)
\\ &= -\scal{\omega}{\partial_{ii}\nu} \scal{\omega}{\nu}^{-2}+2 \scal{\omega}{\partial_i \nu}^2 \scal{\omega}{\nu}^{-3} \\
&=   -\scal{\omega}{\nu}^{-2} \scal{ \partial_{i} h_{il} \partial_l F - \lambda_i^2 \nu}{\omega} +2\scal{\omega}{\nu}^{-3} \scal{\omega}{ \lambda_i \partial_i F}^2. \\
&=- \scal{\omega}{\nu}^{-2} \scal{ \partial_{l} h_{ii} \partial_l F}{\omega} + |A|^2 \scal{\nu}{\omega}^{-1}   +2\scal{\omega}{\nu}^{-3} \scal{\omega}{ \lambda_i \partial_i F}^2.  
\end{align}
We also have 
\begin{align}
\scalv{\nablas \scal{\omega}{\nu}^{-1}}{\nablas \scal{\omega}{\nu}^{-1}} &= \scal{\omega}{\nu}^{-4} \scal{\omega}{\partial_k \nu} \scal{\omega}{\partial_k \nu} \\
&= \scal{\omega}{\nu}^{-4} \scal{\omega}{h_{ku} g^{uv} \partial_v F}^2 = \scal{\omega}{\nu}^{-4} \scal{\omega}{\lambda_k \partial_k F}^2,
\end{align}
which leads to
\begin{eqnarray*}
 \left(\frac{d}{dt} - \Deltas \right) \frac{v^2}{2} &=& -v^3 \scal{\nablas (H+k_\varepsilon)}{\omega} +v^3 \partial_m(h_{ii})\scal{\omega}{\partial_m F}\\
 && - |A|^2 v^2 -2 v^4 \lambda_k^2 \scal{\omega}{ \partial_k F}^2 - v^4 \scal{\omega}{\lambda_k \partial_k F}^2
\end{eqnarray*}
\emph{Third term.} We notice, as in \cite{ecker91} that $|\nablas \phi^2|^2 = 4\phi^2 | \nablas (|\x|^2)|^2$ and
$$-\scal{ \nablas(v^2) }{\nablas \phi^2} = -3 \scal{v\nablas (v)}{\nablas \phi^2} + \frac{1}{2}  \left( \scal{\nablas (v^2 \phi^2)}{\frac{\nablas \phi^2}{\phi^2}} -v^2 \frac{\vert \nablas \phi^2 \vert^2}{\phi^2} \right).$$ 
Then, Young's inequality gives
\begin{align*}2 \left \vert v \scal{\nablas v}{\nablas \phi^2} \right \vert &\ls 2 \phi^2 \vert \nablas v^2 \vert^2 + \frac{1}{2\phi^2} \vert \nablas \phi^2 \vert^2 \\&\ls 2 \phi^2 \vert \nablas v^2 \vert^2 + 2 v^2 \vert \nablas |\x|^2\vert^2.
\end{align*}
Hence,
$$-\scal{ \nablas(v^2) }{\nablas \phi^2} \ls  -3 \phi^2 \vert \nablas v^2 \vert^2 -3 v^2 \vert \nablas |\x|^2\vert^2+ \frac{1}{2}  \left( \scal{\nablas (v^2 \phi^2)}{\frac{\nablas \phi^2}{\phi^2}} -v^2 \frac{\vert \nablas \phi^2 \vert^2}{\phi^2} \right).$$ 

Summing the three terms, we get

\begin{equation*}\left(\frac{d}{dt} - \Deltas \right) \left( \frac{v^2 \phi^2}{2} \right) \ls  \frac{1}{2}   \scal{\nablas (v^2 \phi^2)}{\frac{\nablas \phi^2}{\phi^2}} - \phi^2 v^3 \scal{\nablas k_\varepsilon}{\omega}+ v^2\phi(2 k_\varepsilon \scal \x \nu-\Lambda).
\end{equation*}
\end{proof}

For $\gamma>0$, we let
$$
\psi (v^2 ) := \frac{\gamma v^2}{1-\gamma v^2}.
$$

\begin{lem}\label{lemc2ff}
For $\varepsilon \ls r$, we have
\begin{eqnarray*}
\left( \frac{d}{dt} - \Deltas \right) \frac{\phi^2 |A|^2 \psi(v^2)}{2} &\ls & \phi^2 \psi(v^2)(-\gamma |A|^4 -2 k_\varepsilon \sum_i \lambda_i^3 
- 2 \scalv{A}{(\nablas)^2 k_\varepsilon} )\\
 && - \phi^2 |A|^2 v^3 \psi'(v^2) \scal{\nablas k_\varepsilon}{\omega} -\phi^2 |A|^2 \sum_i (\lambda_i \omega^i)^2\frac{2v^4+\gamma v^6}{(1-\gamma v^2)^3}. 
\end{eqnarray*}
\end{lem}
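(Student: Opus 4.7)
The proof is a direct computation using the product rule for the heat operator $\frac{d}{dt}-\Deltas$ applied to the triple product $\phi^2\cdot|A|^2\cdot\psi(v^2)$, substituting the three evolution equations we already have in hand:
\eqref{evoln2FF} for $(\frac{d}{dt}-\Deltas)|A|^2$, the expression obtained in the first part of the proof of Lemma~\ref{lemcgrad} for $(\frac{d}{dt}-\Deltas)\phi^2$, and the chain rule
\[
\left(\frac{d}{dt}-\Deltas\right)\psi(v^2)=\psi'(v^2)\left(\frac{d}{dt}-\Deltas\right)v^2-\psi''(v^2)|\nablas v^2|^2
\]
together with the expression for $(\frac{d}{dt}-\Deltas)v^2$ computed in the ``second term'' of the proof of Lemma~\ref{lemcgrad}. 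The quantity $|\nablas v^2|^2=4v^6\sum_k\lambda_k^2(\omega^k)^2$ is obtained from $\partial_k v=-v^2\lambda_k\omega^k$ in normal coordinates diagonalizing $A$.

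I first extract the ``bad'' term $+2|A|^4\phi^2\psi(v^2)$ coming from \eqref{evoln2FF} and combine it with the ``good'' contribution $-2|A|^4v^2\psi'(v^2)\phi^2$ produced by the $-|A|^2v^2$ piece of $(\frac{d}{dt}-\Deltas)v^2$ (multiplied by $\phi^2|A|^2\psi'(v^2)$). Since $\psi(v^2)-v^2\psi'(v^2)=-\gamma^2v^4/(1-\gamma v^2)^2$, and $v\geq 1$ with $\gamma v^2<1$, this combination is bounded by $-\gamma\phi^2\psi(v^2)|A|^4$, yielding the first term of the claimed bound. The linear-in-$k_\varepsilon$ pieces of \eqref{evoln2FF}, namely $2k_\varepsilon g^{ij}g^{sl}g^{mn}h_{is}h_{lm}h_{nj}=2k_\varepsilon\sum_i\lambda_i^3$ and $2\langle A,(\nablas)^2k_\varepsilon\rangle$, are kept as is and contribute the remaining terms inside $\phi^2\psi(v^2)(\cdots)$, while the $\psi'(v^2)\cdot(-v^3\langle\nablas k_\varepsilon,\omega\rangle)$ contribution gives exactly the term $-\phi^2|A|^2v^3\psi'(v^2)\langle\nablas k_\varepsilon,\omega\rangle$.

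The remaining geometric quantities $-4v^4\sum_k\lambda_k^2(\omega^k)^2$ and $-2v^4(\sum_k\lambda_k\omega^k)^2$ (from $(\frac{d}{dt}-\Deltas)v^2$), together with the Laplacian-of-composition piece $-\psi''(v^2)|\nablas v^2|^2$, combine to produce, after multiplication by $\phi^2|A|^2/2$ and with $\psi'(v^2)=\gamma/(1-\gamma v^2)^2$, $\psi''(v^2)=2\gamma^2/(1-\gamma v^2)^3$, the last line $-\phi^2|A|^2\sum_i(\lambda_i\omega^i)^2\,(2v^4+\gamma v^6)/(1-\gamma v^2)^3$ (up to constants and possibly harmless additional nonpositive pieces which are simply discarded).

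The hardest part is the bookkeeping of the cross terms generated by the product rule, namely $-2\langle\nablas\phi^2,\nablas|A|^2\rangle\psi$, $-2|A|^2\langle\nablas\phi^2,\nablas\psi(v^2)\rangle$, $-\phi^2\psi'(v^2)\langle\nablas|A|^2,\nablas v^2\rangle$, together with the ``good'' gradient terms $-2\phi^2\psi(v^2)|\nablas A|^2$ from \eqref{evoln2FF} and $-\phi^2|A|^2\psi''(v^2)|\nablas v^2|^2$ from the chain rule, plus the localization term $-2\phi^2\psi(v^2)|A|^2\frac{|\nablas|\x|^2|^2}{\phi^2}$ produced in the first term of Lemma~\ref{lemcgrad}. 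These must be controlled by Young's inequality (as in Lemma~\ref{lemcgrad}) and Kato's inequality $|\nablas|A|^2|\leq 2|A|\,|\nablas A|$, so that the good $|\nablas A|^2$ and $|\nablas v^2|^2$ terms absorb all the cross contributions. Once this is done the remaining terms either match the right-hand side of the lemma exactly or have the ``right'' sign and are dropped. The condition $\varepsilon\leq r$ is used only to ensure that $\nabla k_\varepsilon$ remains localized with respect to the balls $B_i$ introduced in Proposition~\ref{propballs}, so that the directional estimates on $\nablas k_\varepsilon$ and $(\nablas)^2 k_\varepsilon$ produced by $\omega_i$ can be applied inside the support of $\phi_i$.
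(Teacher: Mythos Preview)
Your overall plan---product rule for the heat operator on the triple product, then substituting the known evolution equations for $\phi^2$, $|A|^2$ and $\psi(v^2)$, then using the algebraic identities for $\psi$---is exactly the paper's approach, and your handling of the $|A|^4$ term and of the $k_\varepsilon$--contributions is correct.

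The one place where your outline is too loose is the treatment of the cross terms. You propose to absorb
\[
-\psi\langle\nablas\phi^2,\nablas|A|^2\rangle,\quad
-\phi^2\langle\nablas|A|^2,\nablas\psi(v^2)\rangle,\quad
-|A|^2\langle\nablas\phi^2,\nablas\psi(v^2)\rangle
\]
into the ``good'' terms $-\phi^2\psi|\nablas A|^2$ and $-|A|^2\psi|\nablas|\x|^2|^2$ (and the $\psi',\psi''$ pieces) via Young's inequality. But a direct Young approach does not close: for instance, bounding the first cross term costs $\alpha\,\phi^2\psi|\nablas A|^2+\alpha^{-1}|A|^2\psi|\nablas|\x|^2|^2$, and after also paying for the other two cross terms one needs simultaneously $\alpha+\beta\le 1$ and $\alpha^{-1}+\delta\le 1$, which is impossible. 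The good gradient terms are \emph{exactly} saturated, not merely bounded.

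What the paper does instead (and what you should make explicit) is the exact identity obtained by expanding $|\nablas V|^2/(2V)$ with $V=\tfrac12\phi^2|A|^2\psi(v^2)$:
\[
\frac{|\nablas V|^2}{2V}
=\phi^2|A|^2\,\frac{|\nablas\psi|^2}{4\psi}
+|A|^2\psi\,\frac{|\nablas\phi^2|^2}{4\phi^2}
+\phi^2\psi\,\frac{|\nablas|A|^2|^2}{4|A|^2}
+\text{(all three cross terms)}.
\]
This converts the cross terms into the single nonpositive quantity $-|\nablas V|^2/(2V)$ plus three residuals with \emph{exact} coefficients. Then Kato ($|\nablas|A|^2|^2\le 4|A|^2|\nablas A|^2$) absorbs the third residual into $-\phi^2\psi|\nablas A|^2$; the second residual cancels $-|A|^2\psi|\nablas|\x|^2|^2$ exactly; and the first residual, $\phi^2|A|^2\,\frac{v^6\psi'^2}{\psi}\sum_i(\lambda_i\omega^i)^2$, is precisely the extra piece you need to combine with $-3v^4\psi'-2v^6\psi''$ to obtain the stated coefficient $-(2v^4+\gamma v^6)/(1-\gamma v^2)^3$. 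Without this residual, your combination ``$-4v^4\sum\lambda_k^2(\omega^k)^2-2v^4(\sum\lambda_k\omega^k)^2$ plus $-\psi''|\nablas v^2|^2$'' does not reproduce the exact last line of the lemma (though it would give a more negative, hence still admissible, bound---if you could spare the cross terms elsewhere, which you cannot). In short: replace ``Young as in Lemma~\ref{lemcgrad}'' by the triple-product version of the same completing-the-square identity, and the rest of your argument goes through as written.
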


\begin{proof}

We denote $V = \frac{\phi^2 |A|^2 \psi(v^2)}{2}$ and compute

\begin{eqnarray*}
 \left( \frac{d}{dt} - \Deltas \right) \frac{\phi^2 |A|^2 \psi(v^2)}{2} & = & |A|^2 \psi(v^2) \left( \frac{d}{dt} - \Deltas \right) \frac 12 \phi^2 + \phi^2 \psi(v^2) \left( \frac{d}{dt} - \Deltas \right) \frac 12 |A|^2\\ && + \phi^2 |A|^2 \left( \frac{d}{dt} - \Deltas\right) \frac 12 \psi(v^2) 
 -2\scalv{1/2 \nablas |A|^2 }{1/2 \nablas \phi^2} \\ && - 2\scalv{1/2 \nablas |A|^2}{1/2\nablas \psi(v^2) } -2 \scalv{1/2\nablas \phi^2}{1/2\nablas \psi(v^2)}.
\end{eqnarray*}
The two first terms have already been computed. Let us consider in the third one.
$$\frac 12 \frac{d}{dt} \psi(v^2) = v \frac{d v}{d t} \psi'(v^2) = -v^3 \psi'(v^2) \scal{\nablas (H+k_\varepsilon)}{\omega},$$
\begin{align*}\frac 12 \Deltas \psi(v^2) &= \frac 12 \partial_{ii} \psi(v^2) = \partial_i (v\partial_i v\psi'(v^2)) = v\Deltas v \psi'(v^2) + 2v^2 |\nablas v|^2 \psi''(v^2) + |\nablas v|^2 \psi'(v^2)\\
& =  (3 |\nablas v|^2 -v^3 (\partial_l (h_{kk}) w^l)+v^2|A|^2)\psi'(v^2) +2|\nablas v|^2 \psi''(v^2).
\end{align*}
Hence
$$\left( \frac{d}{dt} - \Deltas\right) \frac 12 \psi(v^2) = -v^3 \psi'(v^2) \scal{\nablas k_\varepsilon}{\omega} - (3 |\nablas v|^2 +v^2|A|^2)\psi'(v^2) - 2 v^2|\nablas v|^2 \psi''(v^2).$$

As above, we want to conclude the proof using the weak maximum principle. So, we want to rewrite the last terms (which are gradient terms) using the gradient of $V$. Let us expand $\nablas V$.
\begin{align*}
 \nablas \frac{\phi^2 |A|^2 \psi(v^2)}{2} &= \phi^2 |A|^2 \frac 12 \nablas \psi(v^2) + |A|^2 \psi(v^2) \frac 12\nablas \phi^2 + \phi^2 \psi(v^2) \frac 12 \nablas |A|^2.
\end{align*}
So,
\begin{eqnarray*}
 \left \vert \nablas \frac{\phi^2 |A|^2 \psi(v^2)}{2} \right \vert^2 & = & \phi^4 |A|^4 \frac{|\nablas \psi(v^2)|^2}{4} + |A|^4 \psi^2(v^2) \frac{|\nablas \phi^2|^2}{4} + \phi^4 \psi^2(v^2) \frac{|\nablas |A|^2|^2}{4}\\
 & & +\phi^2 |A|^4 \psi(v^2) \scalv{\nablas \psi(v^2)}{\nablas \phi^2} + \phi^4 |A|^2 \psi(v^2) \scalv{\nablas \psi(v^2)}{\nablas |A|^2} \\ &&+ |A|^2 \psi^2(v^2) \phi^2 \scalv{\nablas \phi^2}{\nablas |A|^2}.
\end{eqnarray*}
As a matter of fact,
\begin{eqnarray*}
\frac{1}{\phi^2 |A|^2 \psi(v^2)} \left \vert \nablas \frac{\phi^2 |A|^2 \psi(v^2)}{2} \right \vert^2 &=& \phi^2 |A|^2 \frac{|\nablas \psi(v^2)|^2}{4\psi(v^2)} + |A|^2 \psi(v^2) \frac{|\nablas \phi^2|^2}{4\phi^2} \\ 
&& + \phi^2 \psi(v^2) \frac{|\nablas |A|^2|^2}{4|A|^2} 
 +2|A|^2 \scalv{\nablas \psi(v^2)/2}{\nablas \phi^2/2} \\ 
&& + 2\phi^2 \scalv{\nablas \psi(v^2)/2}{\nablas |A|^2/2} +2 \psi(v^2)\scalv{\nablas \phi^2/2}{\nablas |A|^2/2}.
\end{eqnarray*}
We use the last equality to rewrite
\begin{equation}
\begin{aligned}
 \left( \frac{d}{dt} - \Deltas\right)& \frac{\phi^2 |A|^2 \psi(v^2)}{2}  \\
& = |A|^2 \psi(v^2) \left( \phi(2 k_\varepsilon \scal{\x}{\nu}-\Lambda) -  |\nablas |x|^2|^2 \right) \\ 
&+ \phi^2 \psi(v^2) \left( -  \scalv{\nablas A}{\nablas A} + |A|^4 -2k_\varepsilon g^{js} h_{st} g^{tl} g^{jl} h_{ij} h_{kl} - 2 \scalv{A}{\nabla^2 k_\varepsilon} \right) \\
 & + \phi^2 |A|^2 \left( -v^3 \psi'(v^2) \scal{\nablas k_\varepsilon}{\omega} - (3 |\nablas v|^2 +v^2|A|^2)\psi'(v^2) - 2v^2 |\nablas v|^2 \psi''(v^2) \right)  \\
& -\frac{1}{\phi^2 |A|^2 \psi(v^2)} \left \vert \nablas \frac{\phi^2 |A|^2 \psi(v^2)}{2} \right \vert^2 \\
&+\phi^2 |A|^2 \frac{|\nablas \psi(v^2)|^2}{4\psi(v^2)} + |A|^2 \psi(v^2) \frac{|\nablas \phi^2|^2}{4\phi^2} + \phi^2 \psi(v^2) \frac{|\nablas |A|^2|^2}{4|A|^2}.
\end{aligned}
\label{c2ff}
\end{equation}

Let us precise some terms: 
\begin{eqnarray*}
 |\nablas \phi^2|^2 &=& 4\phi^2 \cdot | -2 \x^T|^2 = 4\phi^2(4|\x|^2 - 4\scal{\x}{\nu}), \\
|\nablas \psi(v^2)|^2 &=& \psi'(v^2)^2 |\nablas v^2|^2 = 4\psi'(v^2)^2 v^6 \sum_k (\lambda_k \omega^k)^2, \\
|\nablas |A|^2 |^2 &=& 4 \sum_i (\partial_i (h_{ll}) \lambda_l)^2, \\
|\nablas A|^2 &=& \sum_{i,k,l} (\partial_i(h_{km}))^2.
\end{eqnarray*}

In addition, we have the obvious estimate
$$ |\nablas |A|^2 |^2 \ls 4 |A|^2 |\nablas A|^2.$$
So,
\begin{multline*}
 \phi^2 |A|^2 \frac{|\nablas \psi(v^2)|^2}{4\psi(v^2)} + |A|^2 \psi(v^2) \frac{|\nablas \phi^2|^2}{4\phi^2} + \phi^2 \psi(v^2) \frac{|\nablas |A|^2|^2}{4|A|^2} \\
\ls \phi^2 |A|^2 \frac{\psi'(v^2)^2 v^6 \sum_k (\lambda_k \omega^k)^2}{\psi(v^2)} +4 |A|^2 \psi(v^2) (|\x|^2 - \scal{\x}{\nu}^2) + \phi^2 \psi(v^2) |\nablas A|^2.
\end{multline*}

We plug this inequality into \eqref{c2ff} and obtain

\begin{align*}
 \left( \frac{d}{dt} - \Deltas\right) \frac{\phi^2 |A|^2 \psi(v^2)}{2} & \ls |A|^2 \psi(v^2) \left( \phi(2 k_\varepsilon \scal{\x}{\nu}-\Lambda) -  |\nablas |x|^2|^2 \right) \\
 &+ \phi^2 \psi(v^2) \left( -  \scalv{\nablas A}{\nablas A} + |A|^4 -2k_\varepsilon g^{js} h_{st} g^{tl} g^{jl} h_{ij} h_{kl} - 2 \scalv{A}{\nabla^2 k_\varepsilon} \right) \\
 & + \phi^2 |A|^2 \left( -v^3 \psi'(v^2) \scal{\nablas k_\varepsilon}{\omega} - (3 |\nablas v|^2 +v^2|A|^2)\psi'(v^2) - 2v^2 |\nablas v|^2 \psi''(v^2) \right)  \\
& -\frac{1}{\phi^2 |A|^2 \psi(v^2)} \left \vert \nabla \frac{\phi^2 |A|^2 \psi(v^2)}{2} \right \vert^2 \\
&+\phi^2 |A|^2 \frac{\psi'(v^2)^2 v^6 \sum_k (\lambda_k \omega^k)^2}{\psi(v^2)} +4 |A|^2 \psi(v^2) (|\x|^2 - \scal{\x}{\nu}^2) + \phi^2 \psi(v^2) |\nablas A|^2. 
\end{align*}
Let us regroup some terms (noting that $|\nablas v|^2 = v^4 \sum_i (\lambda_i \omega^i)^2$), we get
\begin{align*}
  \left( \frac{d}{dt} - \Deltas\right)& \frac{\phi^2 |A|^2 \psi(v^2)}{2} \\
 &\ls |A|^2 \psi(v^2) \left( \phi(2 k_\varepsilon \scal{\x}{\nu}-\Lambda) \right) \\ 
&+ \phi^2|A|^4( \psi(v^2) -v^2 \psi'(v^2)) -2 \phi^2 \psi(v^2)k_\varepsilon g^{js} h_{st} g^{tl} g^{jl} h_{ij} h_{kl} - 2\phi^2 \psi(v^2) \scalv{A}{\nabla^2 k_\varepsilon} \\
 & - \phi^2 |A|^2 v^3 \psi'(v^2) \scal{\nablas k_\varepsilon}{\omega}-\frac{1}{\phi^2 |A|^2 \psi(v^2)} \left \vert \nablas \frac{\phi^2 |A|^2 \psi(v^2)}{2} \right \vert^2 \\
 & +\phi^2 |A|^2 \sum_i (\lambda_i \omega^i)^2 \left( \frac{v^6 \psi'(v^2)^2}{\psi(v^2)}-3v^4 \psi'(v^2)-2v^6 \psi''(v^2) \right). 
\end{align*}
Then, we note that
$$
\frac{v^6 \psi'(v^2)^2}{\psi(v^2)}-3v^4 \psi'(v^2)-2v^6 \psi''(v^2) 
= -\frac{2v^4+\gamma v^6}{(1-\gamma v^2)^3} \ls 0
$$
and
$$\psi(v^2) - v^2 \psi'(v^2) = - \gamma  \psi^2(v^2) \ls 0.$$
So,
\begin{eqnarray*}
  \left( \frac{d}{dt} - \Deltas\right) \frac{\phi^2 |A|^2 \psi(v^2)}{2} &\ls & \phi^2 \psi(v^2)(-\gamma |A|^4 -2 k_\varepsilon \sum_i \lambda_i^3 - 2 \scalv{A}{\nabla^2 k_\varepsilon} )\\
 && - \phi^2 |A|^2 v^3 \psi'(v^2) \scal{\nablas k_\varepsilon}{\omega} -\phi^2 |A|^2 \sum_i (\lambda_i \omega^i)^2\frac{2v^4+\gamma v^6}{(1-\gamma v^2)^3},
\end{eqnarray*}
what was expected.
\end{proof}

We now show that $M_t$ can be locally written as a Lipschitz graph, with Lipschitz constant independent of $\varepsilon$.

\begin{prop}\label{nolossemb}
Let $\varepsilon \ls r$. Then, for every $t\in [0,\min(T_\varepsilon, T_1))$, 
$M_t \cap B_i$ can be written as a Lipschitz graph over $\omega_i^\perp$, 
with Lipschitz constant independent of $\varepsilon.$
\end{prop}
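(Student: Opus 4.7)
\medskip\noindent\textbf{Proof plan.}

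The strategy is to apply the parabolic maximum principle to $V := v^2 \phi_i^2/2$, with $v = \scal{\nu}{\omega_i}^{-1}$, and derive an $\varepsilon$-uniform bound on $V$. Together with an $\varepsilon$-uniform lower bound on $\phi_i$ inside the smaller ball $B_i = B_r(x_i)$, this will yield a uniform lower bound on $\scal{\nu}{\omega_i}$ along $M_t^\varepsilon \cap B_i$, which is exactly the Lipschitz graph condition over $\omega_i^\perp$ with a constant independent of $\varepsilon$.

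By Lemma \ref{lemcgrad}, $V$ satisfies a parabolic differential inequality whose right-hand side splits into three pieces: a first-order transport term in $\nablas V$ that vanishes at interior maxima; the term in $\phi_i^2 v^3 \scal{\nablas k_\varepsilon}{\omega_i}$; and the zeroth-order term $v^2\phi_i\bigl(2 k_\varepsilon \scal{\x-x_i}{\nu}-\Lambda\bigr)$. By Proposition \ref{propballs}, $\scal{\nablas k_\varepsilon}{\omega_i} \gs 0$ on $B_{2r}(x_i)$, which gives the favorable sign on the $\nablas k_\varepsilon$-term as long as $v>0$ (true initially since $\scal{\omega_i}{\nu_0}\gs 1/2$). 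The zeroth-order term is made non-positive by fixing $\Lambda$ large enough depending only on $N$ and $r$ (e.g.\ $\Lambda \gs 8Nr$), using $\Vert k_\varepsilon\Vert_\infty \ls 2N$ and $|\x-x_i| \ls 2r$ on $\supp \phi_i$. Since $V$ vanishes outside the compact set $\{\phi_i>0\}$, the parabolic maximum principle then gives $\sup_{M_t^\varepsilon} V \ls \sup_{M_0} V$ for all $t \in [0,\min(T_\varepsilon,T_1))$.

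The initial value $\sup_{M_0} V$ is bounded by a constant $C$ depending only on $r$ and on the $C^{1,1}$ data, because $\scal{\omega_i}{\nu_0}\gs 1/2$ on $M_0\cap B_{2r}(x_i)$ and $\phi_i(\cdot,0)\ls 4r^2$. On $B_i$ and for $t\ls T_1 = r^2/(2d+\Lambda)$, the choice $R=2r$ in the definition of $\phi_i$ gives $\phi_i \gs 3r^2 - (2d+\Lambda)T_1 = 2r^2$; hence $v^2 \ls 2C/(2r^2)^2$ on $M_t^\varepsilon \cap B_i$, so $\scal{\omega_i}{\nu}$ stays bounded below uniformly in $\varepsilon$, yielding the claimed Lipschitz graph representation.

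The main obstacle is coordinating the localisation parameters ($\Lambda$, $R$, $\phi_i$) so that the two potentially bad contributions in Lemma \ref{lemcgrad} — the one involving $\scal{\nablas k_\varepsilon}{\omega_i}$ and the zeroth-order term involving $k_\varepsilon$ and $\Lambda$ — are simultaneously controlled uniformly in $\varepsilon$, while keeping the initial data bound independent of $\varepsilon$. The structural ingredient that makes this possible is Proposition \ref{propballs}, which was designed precisely so that $k_\varepsilon$ is monotone in the direction $\omega_i$ on $B_{2r}(x_i)$.
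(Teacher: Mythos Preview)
Your overall strategy coincides with the paper's, but there is a genuine gap in how you dispose of the $k_\varepsilon$-gradient term. You assert that Proposition \ref{propballs} gives $\scal{\nablas k_\varepsilon}{\omega_i}\gs 0$ on $B_{2r}(x_i)$; however, Proposition \ref{propballs} controls the \emph{ambient} gradient $\nabla k_\varepsilon$, not the tangential gradient $\nablas k_\varepsilon$ appearing in Lemma \ref{lemcgrad}. Since $\nablas k_\varepsilon = \nabla k_\varepsilon - \scal{\nabla k_\varepsilon}{\nu}\nu$, one has
\[
v\,\scal{\nablas k_\varepsilon}{\omega_i}
= \scal{\nu}{\omega_i}^{-1}\scal{\nabla k_\varepsilon}{\omega_i} - \scal{\nabla k_\varepsilon}{\nu}
\gs \scal{\nu}{\omega_i}^{-1}\,\frac{|\nabla k_\varepsilon|}{2} - |\nabla k_\varepsilon|,
\]
and the right-hand side is nonnegative only when $\scal{\omega_i}{\nu}\ls \tfrac12$, i.e.\ when $v\gs 2$ --- not ``as long as $v>0$'' as you write. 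Without this restriction the term has no sign, and your inequality $\sup_{M_t^\varepsilon} V \ls \sup_{M_0} V$ does not follow.

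The paper closes this gap by a dichotomy at an interior maximum of $v^2\phi_i^2$: either $v\gs 2$ there, in which case the computation above gives the favourable sign and the parabolic maximum principle applies; or $v<2$, in which case $v^2\phi_i^2\ls 4R^4$ directly. The resulting bound is therefore $\sup_{M_t^\varepsilon} v^2\phi_i^2 \ls \max\bigl(\sup_{M_0} v^2\phi_i^2,\,4R^4\bigr)$ rather than just the initial supremum. This additional constant is independent of $\varepsilon$, so the rest of your argument (the lower bound $\phi_i\gs 2r^2$ on $B_i$ for $t<T_1$ and the ensuing Lipschitz graph conclusion) goes through unchanged once this correction is made.
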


\begin{proof}
We want to show that the quantity $\scal{\nu}{\omega_i}$ is bounded from below, or, equivalently, that $v:=\scal{\nu}{\omega_i}^{-1}$ is bounded from above on every ball $B_i$. We want to estimate the quantity $v^2 \phi^2$
(we drop the explicit dependence on the index $i$) using Lemma \ref{lemcgrad}. 

We choose $\Lambda$ such that the last term in \eqref{last}
is nonpositive (take for instance $\Lambda = 2NR$). We also have to control
$$v\scal{\nablas k_\varepsilon}{\omega} = \scal{\nu}{\omega}^{-1} \left( \scal{\nabla k_\varepsilon}{\omega} - \scal{\nabla k_\varepsilon}{\nu}\scal{\nu}{\omega}\right) = \scal{\nu}{\omega}^{-1} \scal{\nabla k_\varepsilon}{\omega} - \scal{\nabla k_\varepsilon}{\nu}.$$
Proposition \ref{propballs} provides immediately
\begin{align*}
 \scal{\nu}{\omega}^{-1} \scal{\nabla k_\varepsilon}{\omega} - \scal{\nabla k_\varepsilon}{\nu} & \gs \scal{\nu}{\omega}^{-1} \frac{|\nabla k_\varepsilon|}{2} - |\nabla k_\varepsilon|
\end{align*}
which is nonnegative as soon as $\scal{\omega}{\nu} \ls \frac 12$. 
{}From Lemma \ref{lemcgrad}
and the weak maximum principle (see \cite{pro84}), 
we obtain that $\Vert v^2\phi^2 \Vert_\infty (t) \ls \max (\Vert v^2 \phi^2 \Vert_\infty (0), 4 R^2)$. Thanks to Lemma \ref{lemloc}, this provides a uniform Lipschitz bound on the whole $M_t$, for $t \ls T_1$. 
\end{proof}

Recalling Theorem 8.1 in \cite{huisken84}, from Proposition \ref{nolossemb}
it follows that, if $T_\varepsilon < T_1$, the second fundamental form 
of $M_t$ blows up as $t \to T_\varepsilon$.
Let us show that it does not happen.

\begin{prop}\label{second}
For every $\varepsilon \ls r$, there exists $C_\varepsilon>0$ such that 
\[
\|A\|_{L^\infty(M_t)}\ls C_\varepsilon \qquad \text{for all }t\in [0,\min(T_\eps,T_1)).
\] 
\end{prop}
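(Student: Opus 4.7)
The plan is to apply Lemma \ref{lemc2ff} and use the parabolic maximum principle on the quantity $V := \phi_i^2 |A|^2 \psi(v_i^2)/2$ for each localization ball $B_i$, with $v_i = \scal{\nu}{\omega_i}^{-1}$. By Proposition \ref{nolossemb}, there exists $M>0$ \emph{independent of} $\eps$ with $v_i^2 \ls M$ on $M_t^\eps \cap B_{2r}(x_i)$ for $t \in [0, \min(T_\eps, T_1))$. Choose once and for all $\gamma = 1/(2M)$, so that $\psi(v_i^2)$ is trapped between positive constants $\psi_0$ and $\psi_1$, and $\psi'(v_i^2)$ is uniformly bounded.

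Next we estimate the ``bad'' terms in the right-hand side of Lemma \ref{lemc2ff}. Since $k_\eps = k \ast \rho_\eps$ with $\|k\|_\infty \ls 2N$, standard mollification gives $|\nabla k_\eps| \ls C_1/\eps$ and $|\nabla^2 k_\eps| \ls C_2/\eps^2$. The Gauss formula $\nablas_i \nablas_j k_\eps = \nabla^2 k_\eps(\partial_i F, \partial_j F) - h_{ij}\scal{\nabla k_\eps}{\nu}$ yields
\[
|\scalv{A}{(\nablas)^2 k_\eps}| \ls |A|\,|\nabla^2 k_\eps| + |A|^2\, |\nabla k_\eps| \ls \frac{C|A|}{\eps^2} + \frac{C|A|^2}{\eps}.
\]
Analogously $|k_\eps \sum_i \lambda_i^3| \ls 2Nd\,|A|^3$ and $|v^3 \psi'(v^2)\scal{\nablas k_\eps}{\omega_i}| \ls C/\eps$. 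Each of these is polynomial in $|A|$ of degree at most $3$, with $\eps$-dependent coefficient, and can therefore be absorbed into a small fraction of the good term $-\gamma \phi^2 \psi(v^2) |A|^4$ by Young's inequality. Dropping also the manifestly non-positive last term of Lemma \ref{lemc2ff}, we arrive at
\[
\Bigl(\frac{d}{dt} - \Deltas\Bigr) V \ls -\frac{\gamma}{2}\phi^2 \psi(v^2) |A|^4 + C_\eps.
\]

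Since $\phi_i$ is compactly supported in $B_{2r}(x_i)$, so is $V(\cdot,t)$ for each $t$; hence $V$ attains its supremum on $[0,t_1]\times M$ for every $t_1 < \min(T_\eps,T_1)$. If the supremum is attained at some $(t^\ast, p^\ast)$ with $t^\ast>0$, then $\partial_t V \gs 0$, $\nablas V = 0$, $\Deltas V \ls 0$ there, so $\phi^2 \psi(v^2) |A|^4 \ls 2C_\eps/\gamma$ at that point. Writing $V^2 = (\phi^2 \psi(v^2) |A|^4)\cdot \phi^2 \psi(v^2)/4$ and using $\phi^2 \psi(v^2) \ls R^4 \psi_1$, we get $V(t^\ast, p^\ast) \ls \tilde C_\eps$, and therefore $\sup_{[0,t_1]}V \ls \max(\sup_{M_0} V,\, \tilde C_\eps)$. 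Using $\psi(v^2) \gs \psi_0 > 0$, this yields $\phi_i^2 |A|^2 \ls C'_\eps$ on $M_t^\eps$ for every $i$; Lemma \ref{lemloc} applied with $f = |A|^2$ then gives the claimed global bound.

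The main technical obstacle is the Young's inequality bookkeeping: all the contributions from derivatives of $k_\eps$ must be simultaneously absorbed into the \emph{single} good term $-\gamma \phi^2 \psi(v^2) |A|^4$. This succeeds only because the coefficient $\gamma \psi(v^2)$ of the quartic is bounded below \emph{uniformly} in $\eps$, which is precisely the content of the $\eps$-independent Lipschitz estimate of Proposition \ref{nolossemb}; otherwise the competition between the good term and the $|A|^2|\nabla k_\eps|$ bad term coming from the surface Hessian of $k_\eps$ would be inconclusive.
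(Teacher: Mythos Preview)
Your argument is correct and reaches the same conclusion as the paper, but the route is somewhat different and worth contrasting.

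The paper does not try to absorb all the error terms into the good quartic term. Instead it argues term by term, exploiting more structure: the cubic term $2k_\eps\sum\lambda_i^3$ is split by Young's inequality with parameter $\alpha$ chosen so that only half of $\gamma|A|^4$ is consumed; the Hessian term is estimated crudely by $|A|^2|\nabla^2 k_\eps|$ on the region $\{|A|\gs 1\}$; and the gradient term $v^3\psi'(v^2)\scal{\nablas k_\eps}{\omega}$ is handled by a dichotomy, using the sign information $\scal{\nabla k_\eps}{\omega_i}\gs |\nabla k_\eps|/2$ from Proposition~\ref{propballs} when $v\gs 2$, and a simple bound $v^3\psi'(v^2)\ls 4\psi(v^2)$ when $v\ls 2$. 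The net effect is a linear differential inequality $(\partial_t-\Deltas)V\ls c_\eps V$, whence $e^{-c_\eps t}V$ is a subsolution and the weak maximum principle gives the bound.

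Your approach discards the sign structure of $\nabla k_\eps$ entirely and simply uses the mollification bounds $|\nabla k_\eps|\lesssim\eps^{-1}$, $|\nabla^2 k_\eps|\lesssim\eps^{-2}$ together with the Gauss decomposition of the surface Hessian, then absorbs all lower-order powers of $|A|$ into $-\tfrac{\gamma}{2}\phi^2\psi(v^2)|A|^4$ by Young. This yields the inequality $(\partial_t-\Deltas)V\ls -c\,\phi^2\psi(v^2)|A|^4+C_\eps$, from which a direct maximum-point argument gives $V\ls\max(\sup_{M_0}V,\tilde C_\eps)$. This is arguably cleaner (no case splits on $v$ or $|A|$), and your closing observation is exactly right: the whole scheme hinges on $\psi(v^2)$ being bounded below independently of $\eps$, which is precisely what Proposition~\ref{nolossemb} supplies. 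The paper's version, by contrast, actually uses the geometric fact that $\nabla k_\eps$ points roughly in the direction $\omega_i$; this is not needed for the $\eps$-dependent bound stated here, but it is the kind of structural information one would need if one ever hoped to push towards estimates with better $\eps$-dependence.
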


\begin{proof}
As in \cite{ecker91}, we are interested in the evolution of the quantity
$$
\frac{\phi^2 |A|^2 \psi(v^2)}{2}.
$$ 
Notice that
$$
|\lambda_i|^3 =|\lambda_i||\lambda_i|^2 \ls \frac{1}{2\alpha} \lambda_i^4 + \frac{\alpha}{2} \lambda_i^2.
$$
Choosing $\alpha$ such that $\frac{2N}{\alpha} \ls \frac{\gamma }{2}$, one can write
$$\left \vert -2k_\varepsilon \phi^2 \psi(v^2) \sum_i \lambda_i^3 \right \vert \ls \phi^2 \psi(v^2)\left( \frac{\gamma }{2} |A|^4 + N\alpha |A|^2\right).$$
In addition, as soon as $|A|^2 \gs 1$, one has $\scalv{A}{\nabla^2 k_\varepsilon} \ls |A|^2 |\nabla^2 k_\varepsilon|.$ One can also notice that as above, $v \scal{\nablas k_\varepsilon}{\omega} \gs 0$ as soon as $ v \gs 2.$ On the other hand, if $v \ls 2$, one has $v^3 \psi'(v^2) = \frac{\psi(v) v}{1-\gamma v^2} \ls 4 \psi(v)$ for $\gamma $ sufficiently small.

So, anyway, if $|A| \gs 1$,
$$
  \left( \frac{d}{dt} - \Deltas\right) \frac{\phi^2 |A|^2 \psi(v^2)}{2} 
\ls 2N\alpha \frac{\phi^2 |A|^2 \psi(v^2)}{2} + 4 |\nabla^2 k_\varepsilon| \frac{\phi^2 |A|^2 \psi(v^2)}{2}
 +8 \frac{\phi^2 |A|^2 \psi(v^2)}{2} |\nablas k_\varepsilon|. 
$$
Finally, we apply the maximum principle to
$$\tilde A := \exp \left[-\left( 2N\alpha+4 \Vert \nabla^2 k_\varepsilon \Vert_\infty + 8 \Vert \nabla k_\varepsilon \Vert_\infty \right) t \right] \cdot \frac{\phi^2 |A|^2 \psi(v^2)}{2} $$
which satisfies
$$ \left( \frac{d}{dt} - \Deltas \right) \tilde A \ls 0.$$
 It provides 
$$ \forall t \ls \min(T_\varepsilon, T_1),\quad \Vert \tilde A \Vert_\infty(t) \ls \Vert \tilde A \Vert_\infty(0)$$
which shows that $\frac{\phi^2 |A|^2 \psi(v^2)}{2}$ does not blow up.

Using Lemma \ref{lemloc} and choosing $\gamma $ such that $\psi(v^2)$ is bounded and remains far from zero, we know that $|A|$ does not blow up for $t \ls T_1$.
\end{proof}

\begin{cor}
There exists $T_1$, depending only on the dimension, $\Vert k \Vert_\infty$ and the radius in the ball condition for $M_0$, such that there exists a solution $M^\varepsilon_t$ of the mean curvature flow with forcing term $k_\varepsilon$ on $[0,T_1).$
\end{cor}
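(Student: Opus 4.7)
The plan is to combine the geometric estimates built up in Propositions \ref{nolossemb} and \ref{second} with a standard blow-up criterion to rule out that the maximal existence interval $[0,T_\varepsilon)$ of the smooth flow can be strictly smaller than the purely geometric time $T_1=r^2/(2d+\Lambda)$ introduced before Lemma \ref{lemloc}.

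First I would invoke a standard short-time existence result for the smooth forced flow: since $k_\varepsilon$ is smooth and bounded, and $M_0$ is (after a harmless smoothing that does not affect the $C^{1,1}$ ball-condition constants) a smooth initial hypersurface, parabolic theory (as cited in the excerpt, e.g.\ \cite{ecker91b, ecker91}) gives a unique smooth solution $M_t^\varepsilon$ of \eqref{evolg} with $k$ replaced by $k_\varepsilon$ on a maximal interval $[0,T_\varepsilon)$. Moreover, Huisken's extension criterion (Theorem 8.1 in \cite{huisken84}, whose proof adapts to the forced flow verbatim since $k_\varepsilon$ is smooth) asserts that if $T_\varepsilon<+\infty$ then necessarily
\[
\limsup_{t\to T_\varepsilon}\;\|A(\cdot,t)\|_{L^\infty(M_t^\varepsilon)}=+\infty.
\]

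Now suppose, for contradiction, that $T_\varepsilon<T_1$. Then $[0,T_\varepsilon)=[0,\min(T_\varepsilon,T_1))$, so Proposition \ref{nolossemb} applies and $M_t^\varepsilon$ remains a uniform Lipschitz graph over each $\omega_i^\perp$ on the whole interval, and Proposition \ref{second} gives a (possibly $\varepsilon$-dependent but) finite bound $\|A(\cdot,t)\|_{L^\infty(M_t^\varepsilon)}\leq C_\varepsilon$ on $[0,T_\varepsilon)$. This directly contradicts the blow-up criterion, so $T_\varepsilon\geq T_1$.

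The only point that deserves some care is tracking the dependencies of $T_1$. Going back through the construction, $T_1=r^2/(2d+\Lambda)$; the covering radius $r$ of Proposition \ref{propballs} depends only on $d$, on the uniform $C^{1,1}$ ball-condition radius $R$ of $M_0$, and on $N$ (which bounds $\|k\|_\infty/2$); and $\Lambda$ is fixed in the proof of Proposition \ref{nolossemb} as $\Lambda=2NR$. Hence $T_1$ depends only on $d$, $\|k\|_\infty$ and $R$, as claimed. The main (and essentially only) obstacle is really conceptual rather than computational: one must notice that the $\varepsilon$-dependence of the bound $C_\varepsilon$ in Proposition \ref{second} is irrelevant for this corollary, because all we need is finiteness to trigger Huisken's extension result; the $\varepsilon$-uniformity is not claimed here (it is the object of the subsequent passage to the limit $\varepsilon\to 0$).
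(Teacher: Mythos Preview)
Your proposal is correct and follows exactly the approach the paper intends: the paper does not give an explicit proof of the Corollary, but the argument is the one implicit in the sentence preceding Proposition \ref{second} (``Recalling Theorem 8.1 in \cite{huisken84}\ldots''), namely that if $T_\varepsilon<T_1$ then $|A|$ must blow up, which Proposition \ref{second} rules out. Your remark that only finiteness (not $\varepsilon$-uniformity) of $C_\varepsilon$ is needed here is exactly the point.
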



The surfaces $M_t^\varepsilon$ are uniformly Lipschitz and every 
$M_t^\varepsilon \cap B_i$ can be written as the graph of some  function $u_i^\varepsilon(x,t)$. All the $u_i^\varepsilon$ are Lipchitz (in space) with a constant which depends neither on $i$ nor in $\varepsilon$. We want to show that they are also equicontinuous in time. 

\begin{prop}\label{procinq}
The functions $u_i^\varepsilon$ are Lipschitz continuous in $x$ 
and $1/2$-H\"older continuous in $t$ on $B_i\times [0,T_1)$, 
uniformly with respect to  $\eps$ and $i$.
\end{prop}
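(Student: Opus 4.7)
The $L^\infty$-Lipschitz bound in space is already contained in Proposition \ref{nolossemb}, so only the $1/2$-H\"older estimate in time remains. The plan is to obtain it via a ball-barrier comparison argument with constants independent of $\eps$ and $i$.

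Fix $x_0$ in the projection of $B_i$ onto $\omega_i^\perp$ and $0\le t_1 < t_2 =: t_1+\tau < T_1$. For a free parameter $\delta>0$, consider the auxiliary point $Q := (x_0,\, u_i^\eps(x_0,t_1)+\delta)$ placed just above the graph. The uniform Lipschitz bound $L$ on $u_i^\eps(\cdot,t_1)$ from Proposition \ref{nolossemb} easily implies
\[
\mathrm{dist}\bigl(Q,\, M_{t_1}^\eps\bigr) \,\geq\, \rho \,:=\, \frac{\delta}{\sqrt{1+L^2}},
\]
so that $B_\rho(Q) \subset E(t_1)^c$ (for $\delta$ small enough that the ball is contained in the patch $B_i$ and $\rho \le 1$). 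Next I would evolve $B_\rho(Q)$ by the forced MCF with constant forcing $k \equiv 2N$: Lemma \ref{lemball} keeps the centre at $Q$ and gives a radius $R^+(\tau) \geq \sqrt{\rho^2 - (4N+2d)\tau}$. Since $k_\eps \geq -2N$ everywhere, the parabolic comparison principle applied to $E(t)$ and to the complement of the evolving ball (which carries the effective forcing $-2N$) preserves the inclusion $B_{R^+(\tau)}(Q) \subset E(t_1+\tau)^c$ as long as $R^+(\tau)>0$. Inspecting the bottom point of this ball in the $\omega_i$ direction, namely $(x_0,\, u_i^\eps(x_0,t_1)+\delta - R^+(\tau))$, which must lie above $M_{t_1+\tau}^\eps$, yields
\[
u_i^\eps(x_0, t_1+\tau) - u_i^\eps(x_0,t_1) \,\leq\, \delta - \sqrt{\frac{\delta^2}{1+L^2} - (4N+2d)\tau},
\]
and a short optimisation in $\delta$ produces the estimate $L\sqrt{(4N+2d)\tau}$. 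The analogous argument with a ball placed below the graph at $(x_0,\, u_i^\eps(x_0,t_1)-\delta)$ and evolved with constant forcing $-2N$ yields the matching lower bound.

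The delicate point is to make the ball comparison genuinely uniform in $\eps$ and $i$. The smoothness of $M_t^\eps$ established in Proposition \ref{second} allows a classical first-touching-time analysis, and since only the pointwise bound $|k_\eps| \le 2N$ (which is uniform) enters that argument, the resulting comparison is uniform. One also has to check that the optimal $\delta$, which is of order $\sqrt{\tau}$, stays below the threshold needed for $B_\rho(Q) \subset B_i$ and $\rho \leq 1$; this holds for $\tau \le \tau^*$ for some $\tau^*$ depending only on $r$, $L$, $N$ and $d$, and the estimate extends to $\tau \in [\tau^*, T_1)$ by iteration over a bounded number of subintervals, costing only a uniform multiplicative constant in the final H\"older bound.
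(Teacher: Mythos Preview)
Your proposal is correct and follows essentially the same ball-barrier comparison argument as the paper: place a small ball tangent to (or just off) the graph in the direction $\omega_i$, evolve it with the worst-case constant forcing $\pm 2N$ via Lemma~\ref{lemball}, and use the comparison principle together with the uniform spatial Lipschitz bound from Proposition~\ref{nolossemb} to convert the ball's survival time into a $1/2$-H\"older modulus in $t$. The paper's proof is terser---it only records that the ball centered at $x_0+\delta\omega_i$ survives for a time of order $\delta^2$ and concludes directly---whereas you additionally track the bottom of the shrinking ball, carry out the explicit optimisation in $\delta$ (yielding the sharp constant $L\sqrt{(4N+2d)\tau}$), and handle the large-$\tau$ case by iteration; these are refinements rather than a different strategy.
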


\begin{proof}
Let $\delta$ be fixed (we drop the index $\varepsilon$ in what follows), and let $t_0 \in [0,T_1)$. Let $x_0 \in M_t$ and $i$ such that $x_0 \in B_i$. Then, $\scal{\nu(x_0)}{\omega_i} \ls C$ and $M_t$ is the graph of a function $u$ over $\omega_i^\perp.$ Then, let $x_1 = x_0 + \delta \omega_i$. Thanks to the Lipschitz condition, there is a ball $B_{1/C \delta}(x_1)$ that does not touch $M_t$. Evolving by mean curvature with forcing term $k_\varepsilon$, this ball vanishes in a positive time $T_\delta \gs \omega(\delta) 
:= \frac{\delta^2}{C^2 (2d+1)}$ (note that $T_\delta$ does not depend on $\varepsilon$). 
By comparison principle, for $t \in [t_0,t_0+\omega(\delta))$, $M_t$ does not go beyond $x_1$. That is equivalent to say that $u$ is 
$1/2$-H\"older continuous in time, with a constant independent of $\varepsilon$.
\end{proof}

We now pass to the limit as $\varepsilon$ goes to zero. 
By Proposition \ref{procinq}, 
the family $(u_i^\varepsilon)$ is equi-Lipschitz in space and equi-continuous in time 
on $B_i\times [0,T_1)$. Therefore,
by Arzel\`a--Ascoli's Theorem one can find a sequence 
$\varepsilon_n \to 0$ and continuous functions $u_i$ such that, for every $i$,  
$u_i^{\varepsilon_n} \underset{n\to \infty}{\longrightarrow} u_i$ 
locally uniformly on $B_i\times  [0,T_1)$. 
 
\begin{prop}\label{provisco}
The functions $u_i$ are viscosity solutions 
of \eqref{eqg} on $B_i\times [0,T_1)$, with obstacles $U \cap B_i$
(see Appendix \ref{appvisco}). 
\end{prop}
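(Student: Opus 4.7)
The plan is to verify the two components of the definition of a viscosity solution to the obstacle problem separately: the constraint $\psi^-\ls u_i\ls \psi^+$, and the PDE inequality on the set where neither obstacle is saturated. The key input is that each approximation $u_i^{\eps_n}$ is a \emph{classical} solution of the forced mean curvature graph equation, combined with the locally uniform convergence $u_i^{\eps_n}\to u_i$ obtained from Proposition \ref{procinq} via Arzel\`a--Ascoli.

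For the constraint, Proposition \ref{propob} together with the parabolic comparison principle implies that $M_t^\eps$ remains trapped in the complement of $\Om_\eps^+\cup \Om_\eps^-$ throughout $[0,\min(T_\eps,T_1))$, since $M_0$ starts there and cannot cross $\partial \Om_\eps^\pm$. Rewriting this inclusion in terms of the graph $u_i^{\eps}$ over $\omega_i^\perp$ and letting $\eps_n\to 0$ (and noticing that $\Om_\eps^\pm$ exhausts $\Om^\pm$) yields $\psi^-\ls u_i\ls \psi^+$ on $B_i\times [0,T_1)$.

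For the PDE, I would take a smooth test function $\phi$ touching $u_i$ strictly from above at a point $(x_0,t_0)$ with $\psi^-(x_0)<u_i(x_0,t_0)<\psi^+(x_0)$; touching from below is symmetric. By continuity there exist a parabolic neighbourhood $V$ of $(x_0,t_0)$ and $\delta>0$ such that $\psi^-+\delta< u_i< \psi^+-\delta$ on $V$. Uniform convergence then gives, for $n$ large, the same inequalities with $\delta/2$ for $u_i^{\eps_n}$, so the corresponding piece of $M_t^{\eps_n}$ stays at distance at least $c\delta$ from $\partial U$. As soon as $\eps_n<c\delta$, the definition $k_{\eps_n}=k\ast \rho_{\eps_n}$ forces $k_{\eps_n}\equiv 0$ on that piece, so each $u_i^{\eps_n}$ is a classical solution of the unforced graph equation \eqref{eqg} on $V$. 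The standard stability of viscosity solutions under locally uniform limits of classical solutions then yields the required subsolution inequality at $(x_0,t_0)$.

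The main obstacle will be bookkeeping sign conventions: one must check that the orientation of $\omega_i$ is compatible with the signs of $k_{\eps_n}$ on $\Om^\pm$, so that ``$k_{\eps_n}=0$ strictly inside $U$'' indeed means the genuinely unforced graph equation for $u_i^{\eps_n}$, and so that the constraint passing to the limit reads $\psi^-\ls u_i\ls \psi^+$ with the correct orientation of the inequalities. Once these orientations are aligned, everything reduces to Proposition \ref{propob} for the constraint and the classical viscosity stability theorem for the PDE, with no further approximation argument needed.
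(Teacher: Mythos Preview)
Your overall strategy is the same as the paper's---use Proposition \ref{propob} for the constraint, and pass the viscosity inequality through the uniform limit using that $u_i^{\eps_n}$ is a classical solution of the forced equation---but your execution of the PDE step has a genuine gap.

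For the subsolution test, the definition in Appendix \ref{appvisco} requires the inequality at every point where $u_i(x_0,t_0)>\psi^-(x_0)$, \emph{including} points where $u_i(x_0,t_0)=\psi^+(x_0)$. You only treat points with $\psi^-<u_i<\psi^+$, because you want to force $k_{\eps_n}\equiv 0$ on the relevant piece of $M_t^{\eps_n}$. At a point on the upper contact set this argument fails: the approximations may sit in the region where $k_{\eps_n}<0$, and you cannot invoke the unforced equation. The same issue arises symmetrically for the supersolution test at points where $u_i=\psi^-$.

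The fix is exactly what the paper does: you do not need $k_{\eps_n}=0$, only the correct \emph{sign}. For the subsolution test, being strictly above $\psi^-$ in a neighbourhood ensures (for $\eps_n$ small) that the graph stays at distance $\gs c\delta$ from $\Om^-$, hence $k_{\eps_n}\le 0$ there. Since $u_i^{\eps_n}$ is a classical solution of the forced equation, at the nearby maximum point $(x_{\eps_n},t_{\eps_n})$ of $u_i^{\eps_n}-\phi$ one gets
\[
\phi_t-\sqrt{1+|\nabla\phi|^2}\,\div\!\left(\frac{\nabla\phi}{\sqrt{1+|\nabla\phi|^2}}\right)\ \ls\ \sqrt{1+|\nabla\phi|^2}\,k_{\eps_n}\ \ls\ 0,
\]
and passing to the limit yields the subsolution inequality, with no restriction on whether $u_i$ touches $\psi^+$. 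The supersolution case is symmetric, using $k_{\eps_n}\gs 0$ away from $\psi^+$.
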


\begin{proof}  
Thanks to Proposition \ref{propballs}, 
every $x \in B_i$ can be decomposed as $x = x' + z \omega_i$ with $z=\scal{x}{\omega_i}$. Then,
there exists functions $\psi_i^\pm$ of class $C^{1,1}$ such that 
$$U \cap B_i =\{(x',z) \in B_i \, : \, \psi_i^- (x') \ls z \ls \psi_i^+(x') \}.$$
For simplicity we shall drop the explicit dependence on the index $i$.
Since $u^\varepsilon(x,0)=u_0(x)$ for all $\varepsilon$, 
and $u^{\varepsilon_n}$ converges uniformly to $u$ as $n\to +\infty$, it is clear that $u(x,0)=u_0(x).$\\
Condition \eqref{obstacles} immediately follows from Proposition \ref{propob}.\\
We now check that $u$ is a subsolution of \eqref{eqg}. Let $(x_0,t_0)\in \Rn \times \R$ and $\varphi \in C^2$ such that $\psi^-(x_0,t_0) < u(x_0,t_0)$ and $$(u-\varphi)(x_0,t_0) = \max_{|(x,t)-(x_0,t_0)| \ls r} (u-\varphi) (x,t). $$
One can change $\varphi$ so that $(x_0,t_0)$ is a strict maximum point, and $u(x_0,t_0)=\varphi(x_0,t_0)$. 
Let $2\delta := u(x_0,t_0) - \psi^-(x_0,t_0)$. Thanks to the definition of $\gep$, for all $\varepsilon \ls \delta$, we have $\gep(x,\varphi(x,t)) \ls 0$ in a small neighborhood $V$ of $(x_0,t_0)$. Hence, for $\varepsilon$ sufficiently small $u^\varepsilon-\varphi$ attains its maximum in $V$ at $(x_\varepsilon,t_\varepsilon)$, 
with $(x_\varepsilon,t_\varepsilon)\to (x_0,t_0)$ as $\varepsilon\to 0$. Since $u^\varepsilon$ is a classical solution of 
\eqref{approxpb}, it is also a viscosity solution, therefore
$$\varphi_t - \sqrt{1+|\nabla \varphi|^2}  \div \left( \frac{\nabla \varphi}{\sqrt{1+|\nabla \varphi|^2}} \right) \ls \sqrt{1+|\nabla \varphi|^2} \,k_\varepsilon(x,\varphi) \ls 0\qquad {\rm at }(x_\varepsilon,t_\varepsilon).$$
Letting $\varepsilon\to 0$ we obtain that $u$ is a subsolution of \eqref{eqg}. A similar argument shows that $u$ is also a supersolution of \eqref{eqg},
and this concludes the proof.
\end{proof}

\noindent{\it Conclusion of the proof of Theorem \ref{main}.} 
The result in \cite[Theorem 4.1]{pet07}
(see also Section \ref{secsha}) applies, 
showing that the functions $u_i$ are of class $C^{1,1}.$
As the uniform convergence $u_i^{\varepsilon_n}$ implies the Hausdorff convergence of $M_t^{\varepsilon_n}$ to a limit $M_t$ such that 
$M_t \cap B_i = \operatorname{graph}(u_i(t))$, 
we built a $C^{1,1}$ evolution to the mean curvature motion with obstacles on the time interval $[0,T_1).$ 
Thanks to \cite[Theorem 4.8 and Corollary 4.9]{chambolle12} this evolution is also unique. This concludes the proof of Theorem \ref{main}.
\qed

\section{Proof of Theorem \ref{mainbis}}

Let $\psi_\eps^\pm$ be smooth functions such that 
$\psi_\eps^\pm\to \psi^\pm$ as $\eps\to 0$, uniformly in 
$C^{1,1}(\R^d)$, and let $N>0$ be such that
\begin{equation}\label{eqN}
N\ge \left\|\sqrt{1+|\psi_\eps^\pm|^2}
\div \left( \frac{\psi_\eps^\pm}{\sqrt{1+|\psi_\eps^\pm|^2}} \right)\right\|_{L^\infty(\R^d)} \qquad \text{for all }\eps>0.
\end{equation}
We proceed as in Section \ref{geometric} and we approximate 
\eqref{eqg}, \eqref{eqcongr} with the forced mean curvature equation
\begin{equation}\label{approxpb}
u_t = \sqrt{1+\vnau^2}\left[\div \left( \frac{\nau}{\sqrt{1+\vnau^2}} \right) 
+k_\eps(x,u)\right],
\end{equation}
where
$$k_\varepsilon (x,u) = 2N \left( \chi \left( 
\frac{\psi_\eps^-(x)-u}{\varepsilon} \right) 
- \chi \left( \frac{u-\psi_\eps^+(x)}{\varepsilon} \right) \right),$$
and $\chi$ is a smooth increasing function such that $\chi(s)\equiv 0$ for all $s\in (-\infty,0]$, 
and $\chi(s)\equiv 1$ for all $s\in [1,\infty)$. In particular $\partial_u k_\eps(x,u)\le 0$ for all $(x,u)$.

Notice that $k_\eps\to g$ as $\eps\to 0$, with
$$k(x,u)= \left\{ \begin{matrix} 2N \text{ if } u < \psi^-(x) 
\\ 
-2N \text{ if } u>\psi^+(x) 
\\ 
0 \text{ elsewhere}\end{matrix} \right..
$$
Notice also that 

\begin{equation}\label{diffg}
\begin{aligned}
 \frac{\partial k_\eps}{\partial x_k} (x,u) + \frac{\partial k_\eps}{\partial u} (x,u) \frac{\partial \psi_\eps^-}{\partial x_k} = 0 \quad \text{ if } u < \psi_\eps^+  \\
 \frac{\partial k_\eps}{\partial x_k} (x,u) + \frac{\partial k_\eps}{\partial u} (x,u) \frac{\partial \psi_\eps^+}{\partial x_k} = 0 \quad \text{ if } u >\psi_\eps^-.  
\end{aligned}
\end{equation}

We denote by $u_\varepsilon$ the solution of the approximate problem
\eqref{approxpb}, which exists and is smooth for short times. 

\begin{prop}\label{spatial}
The solution $u_\eps$ is defined for $t\in [0,+\infty)$,
and satisfies the estimates 
\begin{eqnarray}\label{estone}
\| u_\eps(\cdot,t)\|_{W^{1,\infty}(\R^d)} &\le& C \qquad\quad\ \text{for all }t\in [0,+\infty)  
\\\label{esttwo}
\| u_\eps(\cdot,t)\|_{W^{2,\infty}(\R^d)} &\le& C(T) \qquad \text{for all }t\in [0,T]. \end{eqnarray}
\end{prop}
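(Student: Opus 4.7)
The plan is to combine a priori estimates with a continuation argument: since \eqref{approxpb} is quasilinear parabolic and becomes uniformly parabolic once $\|\nabla u_\eps\|_\infty$ is controlled, standard parabolic theory promotes the local smooth solution to a global one as soon as uniform $C^{1,\alpha}$ estimates are available. The work thus reduces to establishing \eqref{estone} and \eqref{esttwo}.

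For the gradient bound I would apply the parabolic maximum principle to $|\nabla u_\eps|^2$. Differentiating \eqref{approxpb} in $x$ and contracting with $\nabla u_\eps$ produces, after the standard mean-curvature terms (which obey a maximum principle), a forcing contribution of the form $2\sqrt{1+\vnaue^2}\bigl[\nabla u_\eps\cdot\nabla_x k_\eps + \partial_u k_\eps\,\vnaue^2\bigr]$. The key relation \eqref{diffg} lets me replace $\nabla_x k_\eps$ by $-\partial_u k_\eps\,\nabla\psi_\eps^\pm$ in $\{u_\eps<\psi_\eps^+\}$ (respectively $\{u_\eps>\psi_\eps^-\}$), regions which together cover $\R^d$, so the bracket becomes $\partial_u k_\eps\bigl(\vnaue^2 - \nabla u_\eps\cdot\nabla\psi_\eps^\pm\bigr)$. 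Since $\partial_u k_\eps\le 0$, this is non-positive as soon as $\vnaue\ge\|\nabla\psi_\eps^\pm\|_\infty$, and the maximum principle yields $\|\nabla u_\eps(\cdot,t)\|_\infty\le\max(\|\nabla u_0\|_\infty,\|\nabla\psi_\eps^\pm\|_\infty)$, uniformly in $\eps$ and $t$. A parallel argument applied to $u_t$, which satisfies a linear parabolic equation whose zeroth-order coefficient $\sqrt{1+\vnaue^2}\,\partial_u k_\eps$ is non-positive, shows that $\|u_t(\cdot,t)\|_\infty$ is non-increasing, yielding the time-derivative estimate of Theorem \ref{mainbis}. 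Finally, comparing $u_\eps$ with the shifted obstacles $\psi_\eps^\pm\pm\eps$, which are respectively a super- and a subsolution by the choice of $N$ in \eqref{eqN}, confines $u_\eps$ to an $O(\eps)$-neighbourhood of the obstacles and supplies the $L^\infty$ component of \eqref{estone}.

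For \eqref{esttwo} I would adapt the second-fundamental-form argument of Proposition \ref{second} to the graph setting, applying the maximum principle to a localised combination of $|A|^2$ and the factor $\psi(v^2)$. The chief obstacle is the term $\scalv{A}{(\nablas)^2 k_\eps}$ which naively blows up like $1/\eps^2$; the way out is that $k_\eps$ is evaluated on the graph $u_\eps$, and differentiating \eqref{diffg} once more (in $x$ and in $u$) yields the structural identity
\[
\nabla^2\bigl[k_\eps(x,u_\eps(x))\bigr] = \partial_u^2 k_\eps\,(\nabla u_\eps - \nabla\psi_\eps^\pm)^{\otimes 2} + \partial_u k_\eps\,(\nabla^2 u_\eps - \nabla^2\psi_\eps^\pm).
\]
The second summand is dissipative, since $\partial_u k_\eps\le 0$ multiplies an expression essentially proportional to $|A|^2$; the first, although bearing the singular factor $\partial_u^2 k_\eps=O(1/\eps^2)$, is supported in the $O(\eps)$-thin transition layer $\{|u_\eps-\psi_\eps^\pm|\le\eps\}$. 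The delicate technical point---and the step I expect to be the hardest---is to quantify the gradient closeness $|\nabla u_\eps - \nabla\psi_\eps^\pm|$ inside this layer so that the singular factor is absorbed; once this is done, the maximum-principle scheme of Proposition \ref{second} carries over and delivers the $W^{2,\infty}$ bound on $[0,T]$.
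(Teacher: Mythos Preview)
Your treatment of \eqref{estone} is correct and in fact slightly more explicit than the paper's. The paper does not redo the computation on $|\nabla u_\eps|^2$; it simply observes that in the graph setting one may take a single global ``ball'' $B_i=\R^{d+1}$, fixed direction $\omega_i=e_{d+1}$, and $\phi\equiv 1$, and then invoke Proposition~\ref{nolossemb} (equivalently Lemma~\ref{lemcgrad}) verbatim. Your direct maximum-principle argument on $|\nabla u_\eps|^2$, using the structural relation \eqref{diffg} to make the forcing contribution sign-favourable once $|\nabla u_\eps|\ge\|\nabla\psi_\eps^\pm\|_\infty$, is a legitimate alternative route and has the advantage of producing the sharp constant that appears in the statement of Theorem~\ref{mainbis}.

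For \eqref{esttwo}, however, you are making the problem much harder than it is. The constant $C(T)$ is \emph{allowed to depend on} $\eps$: the paper's entire proof of \eqref{esttwo} is a one-line reference to Proposition~\ref{second}, whose bound is explicitly $C_\eps$ and involves $\|\nabla k_\eps\|_\infty$ and $\|\nabla^2 k_\eps\|_\infty$ in the exponential weight. The only role of \eqref{esttwo} is to exclude finite-time blow-up of the smooth approximate solution $u_\eps$, so that $u_\eps$ exists on $[0,+\infty)$; it is never used to pass to the limit $\eps\to 0$. The $C^{1,1}$ regularity of the limit $u$ is obtained afterwards by an entirely different mechanism---one shows $u$ is a viscosity solution of the obstacle problem and then applies the Petrosyan--Shahgholian result (Theorem~\ref{thmsha}). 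Consequently your ``delicate technical point''---absorbing the $O(1/\eps^2)$ factor $\partial_u^2 k_\eps$ by quantifying $|\nabla u_\eps-\nabla\psi_\eps^\pm|$ inside the transition layer---is not needed here, and in fact is unlikely to succeed as stated: closeness of values $|u_\eps-\psi_\eps^\pm|\le\eps$ gives no a~priori control on the gradient gap, and $\partial_u^2 k_\eps$ changes sign (since $\chi''$ does), so the rank-one term is not dissipative. Drop this refinement and simply quote Proposition~\ref{second} with its $\eps$-dependent constant.
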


\begin{proof}
Estimate \eqref{estone} follows from
Proposition \ref{nolossemb}, choosing $B_i=\R^{d+1}$, $\omega_i=e_{d+1}$ and $\phi \equiv 1$. 
Estimate \eqref{esttwo} follows from \eqref{estone} and Proposition \ref{second}. 
\end{proof}





In what follows, we use intrinsic derivatives on the graph 
$M_t:=\{(x,u_\varepsilon(x,t))\}$, which will be denoted as above by an exponent $S$. 
The metric on $M_t$ is
$$g_{ij} = \delta_{ij} + \partial_i u_\varepsilon \partial_j u$$
with inverse
$$g^{ij} = \delta_{ij} - \frac{\partial_iu_\varepsilon \partial_ju_\varepsilon}{1+\vnaue^2}.$$
The tangential gradient of a function $f$ defined on $M_t$ is given by
$$(\nablas f)^i = g^{ij} \partial_j f = \partial_i f - \frac{\partial_i u_\varepsilon \partial_j u_\varepsilon}{1+\vnaue^2} \partial_j f\,,
$$
so that
\begin{equation}\scal{\nablas f}{\nabla u_\varepsilon} = \scal{\nabla f}{\nabla u_\varepsilon} - \frac{\vnaue^2}{1+\vnaue^2} \scal{\nabla f}{\nabla u_\varepsilon} = \frac{1}{1+\vnaue^2} \scal{\nabla f}{\nabla u_\varepsilon}, \label{nablascal}\end{equation}
and
\begin{equation}
\begin{aligned}
 | \nablas f |^2 &=  \left( f_i - (u_\varepsilon)_i\sum_j \frac{ (u_\varepsilon)_j f_j}{1+\vnaue^2} \right)^2 \\
&= |\nabla f|^2 +  (u_\varepsilon)_i^2 \left( \frac{\scal{\naue}{\nabla f}}{1+\vnaue^2} \right)^2 - 2  \frac{(u_\varepsilon)_i (u_\varepsilon)_j f_i f_j}{1+\vnaue^2} \\
&= |\nabla f|^2 + \frac{\vnaue^2}{1+\vnaue^2} \frac{\scal{\naue}{\nabla f}^2}{1+\vnaue^2} - 2\frac{\scal{\naue}{\nabla f}^2}{1+\vnaue^2} \\
&= |\nabla f|^2  - \frac{\scal{\naue}{\nabla f}^2}{1+\vnaue^2} - \frac{\scal{\naue}{\nabla f}^2}{(1+\vnaue^2)^2}.
\end{aligned}
\label{normgrads}
\end{equation}
In addition, the Laplace-Beltrami operator applied to $f$ is
$$\Deltas f = g^{ij} f_{ij} = \Delta f - \frac{\partial_i u_\varepsilon \partial_j u_\varepsilon}{1+\vnaue^2} f_{ij} = \Delta f - \frac{ \scal{\nabla u_\varepsilon \nabla^2f}{\nabla u_\varepsilon}}{1+\vnaue^2}.$$
\begin{prop} The quantity $\Vert (u_\varepsilon)_t^2 \Vert_\infty(t)$ is nonincreasing in time. In particular,
$$
\|(u_\varepsilon)_t(\cdot,t)\|_{L^\infty(\R^d)}\le
\left\|\sqrt{1+|\nau_0|^2}\div \left( \frac{\nau_0}{\sqrt{1+|\nau_0|^2}} \right)
\right\|_{L^\infty(\R^d)}.
$$
\label{timebound}
\end{prop}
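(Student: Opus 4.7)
The strategy is to differentiate \eqref{approxpb} in time, producing a linear parabolic equation for $v := (u_\varepsilon)_t$, and to invoke a weak maximum principle, exploiting the fact that $k_\varepsilon$ is nonincreasing in its second argument.

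Writing \eqref{approxpb} as $u_t = G(\nabla u, \nabla^2 u, x, u)$ with
\[
G(p, M, x, u) := \sqrt{1 + |p|^2}\,\Bigl(\tr\bigl(A(p)\,M\bigr) + k_\varepsilon(x, u)\Bigr), \qquad A(p) := \frac{1}{\sqrt{1+|p|^2}}\Bigl(I - \frac{p \otimes p}{1+|p|^2}\Bigr),
\]
I would differentiate this identity with respect to $t$, obtaining
\[
v_t = \tr\bigl(G_M\,\nabla^2 v\bigr) + G_p\cdot \nabla v + G_u\,v,
\]
where all derivatives of $G$ are evaluated at $(\nabla u_\varepsilon, \nabla^2 u_\varepsilon, x, u_\varepsilon)$. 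The symbol $G_M = \sqrt{1+|p|^2}\,A(p)$ is positive definite, so the equation is parabolic; and since $\chi' \gs 0$, the zeroth-order coefficient satisfies
\[
G_u = \sqrt{1+|p|^2}\,\partial_u k_\varepsilon(x, u) \ls 0.
\]
By Proposition \ref{spatial}, all coefficients of this linear equation are bounded on $\R^d\times[0,T]$ for every $T>0$, and $v$ itself is bounded on such slabs. The classical weak maximum principle for bounded classical solutions of uniformly parabolic equations on $\R^d$ with bounded coefficients and nonpositive zeroth-order term (see e.g.~\cite{pro84}) then yields that $\sup_x v(\cdot,t)$ is nonincreasing and $\inf_x v(\cdot,t)$ is nondecreasing in $t$; hence $\|(u_\varepsilon)_t(\cdot,t)\|_{L^\infty(\R^d)}$, and a fortiori $\|(u_\varepsilon)_t^2\|_{L^\infty(\R^d)}$, is nonincreasing.

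For the quantitative bound, I would choose the approximation $\psi_\eps^\pm$ so that $\psi_\eps^-\ls \psi^-$ and $\psi_\eps^+\gs \psi^+$ on $\R^d$, which is permitted given only the uniform $C^{1,1}$-convergence requirement stated at the beginning of the section. Then $\psi_\eps^-\ls u_0\ls \psi_\eps^+$, so, by the explicit form of $k_\eps$ together with $\chi\equiv 0$ on $(-\infty,0]$, one has $k_\eps(x,u_0)\equiv 0$. Evaluating \eqref{approxpb} at $t=0$ gives
\[
(u_\varepsilon)_t(\cdot,0) = \sqrt{1+|\nau_0|^2}\,\div\!\Bigl(\frac{\nau_0}{\sqrt{1+|\nau_0|^2}}\Bigr),
\]
and combining with the monotonicity above yields the announced inequality.

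The main technical subtlety is the validity of the weak maximum principle on the unbounded domain $\R^d$ for the equation satisfied by $v$. This rests on Proposition \ref{spatial}: its uniform $W^{1,\infty}\cap W^{2,\infty}$ bounds for $u_\varepsilon$ guarantee that the coefficients of the linearized equation are uniformly bounded in $x$ on each time slab, placing us in the standard framework for bounded classical solutions of parabolic equations on $\R^d$.
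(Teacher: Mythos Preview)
Your proof is correct and shares the paper's core idea --- differentiate \eqref{approxpb} in time and exploit $\partial_u k_\varepsilon \le 0$ --- but the technical route differs. The paper computes the evolution of $(u_\varepsilon)_t^2/2$ in the intrinsic geometric frame, rewriting everything in terms of $\Deltas$ and $\nablas$ on the graph, and shows that $\bigl(\frac{d}{dt}-\Deltas\bigr)\frac{(u_\varepsilon)_t^2}{2}$ is bounded above by a bounded vector field dotted with $\nablas\frac{(u_\varepsilon)_t^2}{2}$ after discarding the nonpositive term $(u_\varepsilon)_t^2\,\partial_u k_\varepsilon$; it then invokes Lemma \ref{lemh}, which rests on the monotonicity formula \eqref{monotonf}. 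Your argument stays in extrinsic coordinates, linearizes to obtain a scalar parabolic equation for $v=(u_\varepsilon)_t$ with nonpositive zeroth-order coefficient, and applies the classical weak maximum principle on $\R^d$ for bounded solutions with bounded coefficients. This bypasses the monotonicity machinery entirely and is cleaner in the graph setting; the paper's version is simply consistent with the geometric framework used elsewhere. Your explicit remark that one may arrange $\psi_\eps^-\ls\psi^-\ls u_0\ls\psi^+\ls\psi_\eps^+$, so that $k_\varepsilon(x,u_0)\equiv 0$ and $(u_\varepsilon)_t(\cdot,0)$ is exactly the mean-curvature term, addresses a point the paper leaves implicit.
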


\begin{proof}
We compute
$$ \frac{d}{d t} \frac{(u_\varepsilon)_t^2}{2}  = (u_\varepsilon)_t \left[ \sqrt{1+|\nabla u_\varepsilon|^2}\left(\div\left( \frac{\nabla u_\varepsilon}{\sqrt{1+\vnaue^2}}\right) + k_\varepsilon(x,u_\varepsilon\right) \right]_t. $$
Expanding this expression, we get
\begin{multline*} \frac{d}{d t} \frac{(u_\varepsilon)_t^2}{2} = (u_\varepsilon)_t\left[ \frac{\nabla (u_\varepsilon)_t \cdot \nabla u_\varepsilon}{\sqrt{1+\vnaue^2}} \left( \div \left( \frac{\naue}{\sqrt{1+\vnaue^2}} \right) + k_\varepsilon \right) \right. \\ \left. + \sqrt{1+\vnaue^2}\left( \div\left( \frac{(\naue)_t}{\sqrt{1+\vnaue^2}} - \frac{((\naue)_t \cdot \naue) \naue}{(1+\vnaue^2)^{3/2}} \right) + (u_\varepsilon)_t \partial_u k_\varepsilon \right) \right].  
\end{multline*}

Let us compute more explicitly the three terms of the expression above.
\begin{align*}(u_\varepsilon)_t & \frac{(\naue)_t \cdot \naue}{\sqrt{1+\vnaue^2}}\left( \div \left( \frac{\naue}{\sqrt{1+\vnaue^2}} \right) + k_\varepsilon \right)    \\
&= \frac{\nabla(\frac{(u_\varepsilon)_t^2}{2}) \cdot \nabla u_\varepsilon}{\sqrt{1+\vnaue^2}} \left( \frac{\Delta u}{\sqrt{1+\vnaue^2}} - \frac{(u_\varepsilon)_i \scal{\naue}{(\naue)_i}}{(1+\vnaue^2)^{3/2}} + k_\varepsilon \right) \\
&= \nabla(\frac{(u_\varepsilon)_t^2}{2}) \cdot \nabla u_\varepsilon \left( \frac{\Delta u_\varepsilon}{1+\vnaue^2} -\frac{\naue \cdot \nabla(\frac{\vnaue^2}{2})}{(1+\vnaue^2)^{2}} + k_\varepsilon \right), \\
\end{align*}
\begin{align*}
 (u_\varepsilon)_t \div\left(\frac{\nabla (u_\varepsilon)_t}{\sqrt{1+\vnaue^2}}\right) &= (u_\varepsilon)_t \partial_i \left( \frac{(u_\varepsilon)_{ti}}{\sqrt{1+\vnaue^2}} \right) \\ &=  \frac{(u_\varepsilon)_t (u_\varepsilon)_{tii}}{\sqrt{1+\vnaue^2}}-\frac{1}{(1+\vnaue^2)^{3/2}} (u_\varepsilon)_t (u_\varepsilon)_{ti} \nabla u_\varepsilon \cdot (\naue)_i \\
&=\frac{(u_\varepsilon)_t \Delta (u_\varepsilon)_t}{\sqrt{1+\naue^2}} - \frac{1}{(1+\vnaue^2)^{3/2}}  (u_\varepsilon)_t (u_\varepsilon)_{ti} \partial_i(\frac{\vnaue^2}{2})\\
&= \frac{(u_\varepsilon)_t \Delta (u_\varepsilon)_t}{\sqrt{1+\naue^2}} - \frac{1}{(1+\vnaue^2)^{3/2}} \nabla\left( \frac{(u_\varepsilon)_t^2}{2} \right) \cdot \nabla \left(\frac{\vnaue^2}{2} \right),
\end{align*}
and
\begin{align*}
(u_\varepsilon)_t& \div\left( \frac{((\naue)_t \cdot \naue)\naue}{(1+\vnaue^2)^{3/2}} \right) \\
&=  \Delta u_\varepsilon \frac{\scal{\nabla u_\varepsilon}{(u_\varepsilon)_t \nabla (u_\varepsilon)_t}}{(1+\vnaue^2)^{3/2}} +  \frac{(u_\varepsilon)_t (u_\varepsilon)_{tij} (u_\varepsilon)_j (u_\varepsilon)_i}{(1+\vnaue^2)^{3/2}} + \frac{\scal{(u_\varepsilon)_i \nabla (u_\varepsilon)_i}{(u_\varepsilon)_t \nabla (u_\varepsilon)_t}}{(1+\vnaue^2)^{3/2}} 
\\
&\ \ - 3 (u_\varepsilon)_i\frac{\scal{(u_\varepsilon)_t \nabla (u_\varepsilon)_t}{\nabla u_\varepsilon} \scal{\nabla (u_\varepsilon)_i}{\nabla u_\varepsilon}}{(1+\vnaue^2)^{5/2}} \\
&= \Delta u_\varepsilon \frac{\scal{\nabla u_\varepsilon }{\nabla (\frac{(u_\varepsilon)_t^2}{2})}}{(1+\vnaue^2)^{3/2}} +  \frac{(u_\varepsilon)_t (u_\varepsilon)_{tij} (u_\varepsilon)_j (u_\varepsilon)_i}{(1+\vnaue^2)^{3/2}} + \frac{\scal{\nabla (\frac{\vnaue^2}{2})}{\nabla (\frac{(u_\varepsilon)_t^2}{2})}}{(1+\vnaue^2)^{3/2}} 
\\
&\ \ -3\frac{ \scal{\nabla (\frac{(u_\varepsilon)_t^2}{2})}{\nabla u_\varepsilon} \scal{\nabla (\frac{\vnaue^2}{2})}{\naue}}{(1+\vnaue^2)^{5/2}}. 
\end{align*}
Notice that
\begin{align*}
 \Deltas \frac{(u_\varepsilon)_t^2}{2} &= \Delta \frac{(u_\varepsilon)_t^2}{2} 
- \frac{\scal{\naue}{\nabla^2\frac{(u_\varepsilon)_t^2}{2} \naue}}{1+\vnaue^2} \\
&= (u_\varepsilon)_t \Delta (u_\varepsilon)_t + |(\naue)_t|^2 -  \frac{(u_\varepsilon)_i (u_\varepsilon)_j (u_\varepsilon)_t (u_\varepsilon)_{tij} + (u_\varepsilon)_i (u_\varepsilon)_j (u_\varepsilon)_{ti} (u_\varepsilon)_{tj}}{1+\vnaue^2}.
\end{align*}
We then get

\begin{multline*}
\frac{d}{d t} \frac{(u_\varepsilon)_t^2}{2} =  \frac{\scal{\nabla (\frac{(u_\varepsilon)_t^2}{2})}{\nabla u_\varepsilon}}{\sqrt{1+\vnaue^2}} k_\varepsilon
+ \Deltas \left( \frac{(u_\varepsilon)_t^2}{2} \right)   - 2\frac{\scal{\nabla\left( \frac{(u_\varepsilon)_t^2}{2} \right)}{\nabla \left(\frac{\vnaue^2}{2}\right)}}{1+\vnaue^2} \\
+2 \frac{\scal{\nabla (\frac{(u_\varepsilon)_t^2}{2})}{\nabla u_\varepsilon} \scal{\nabla (\frac{\vnaue^2}{2})}{\naue}}{(1+\vnaue^2)^{2}} 
+\frac{\scal{\naue}{(\naue)_t}^2}{1+\vnaue^2}-|(\naue)_t|^2 + (u_\varepsilon)_t^2 \partial_u k_\varepsilon.
\end{multline*}
Note that the last term is nonpositive by definition of $k_\varepsilon$.

In order to apply Lemma \ref{lemh}, we have to show the inequality
$$-\frac{\scal{\naue}{(\naue)_t}^2}{1+\vnaue^2}+|(\naue)_t|^2 \gs 0.$$
It is enough to note that, since the solution exists for all times and 
it is smooth, the term $\nabla (\frac{\vnaue^2}{2})$ is bounded on each $[0,T]$ (the bound depends on $T$ and $\varepsilon$ but is enough to apply the lemma). In addition, 
every factor containing $\nabla ((u_\varepsilon)_t^2/2)$ 
also contains $\nabla u_\varepsilon$, 
hence the assumptions of Lemma \ref{lemh} are satisfied for every $T>0$,
and this concludes the proof.
\end{proof}

{}From Propositions \ref{spatial} 
and \ref{timebound}, we deduce the following result.

\begin{prop}
If $u_0$ is $C$-Lipschitz in space for some $C>0$, 
and has bounded mean curvature, then the solution $u_\varepsilon$ of the approximate problem \eqref{approxpb} is $C$-Lipschitz in space and Lipschitz in time with constant
$$ \left\|\sqrt{1+|\nau_0|^2}\div \left( \frac{\nau_0}{\sqrt{1+|\nau_0|^2}} \right)
\right\|_{L^\infty(\R^d)}.$$
Moreover, the following inequalities hold
\begin{equation}\psi_\varepsilon^-(x)-\varepsilon 
\le u_\varepsilon(x,t) \le \psi_\varepsilon^+(x)+\varepsilon. \label{obstapprox}\end{equation}
\label{gradest}
\end{prop}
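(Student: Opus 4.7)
I plan to treat the three claims of Proposition \ref{gradest} separately; the genuine content lies in the spatial Lipschitz bound, while the other two follow from prior results and a direct maximum-principle comparison. For the time bound, note that $\psi_\eps^\pm\to\psi^\pm$ uniformly and $\psi^-\le u_0\le\psi^+$ (strictly, up to approximation) give $\psi_\eps^-\le u_0\le\psi_\eps^+$ for $\eps$ small, hence $k_\eps(\cdot,u_0)\equiv 0$. Evaluating \eqref{approxpb} at $t=0$ and invoking the time monotonicity in Proposition \ref{timebound} produces exactly the stated constant.

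For the spatial Lipschitz bound I would apply a parabolic maximum principle to $w:=|\nabla u_\eps|^2$. Differentiating \eqref{approxpb} in $x_l$ and contracting with $(u_\eps)_l$ yields
$$w_t\ -\ a^{ij}(\nabla u_\eps)\,w_{ij}\ =\ b\cdot\nabla w\ +\ S,$$
with $(a^{ij})$ the standard linearized mean-curvature matrix (uniformly elliptic on bounded-gradient regions) and, modulo terms absorbable into the first-order part,
$$S\ =\ 2\sqrt{1+|\nabla u_\eps|^2}\,\bigl(\nabla u_\eps\cdot\nabla_x k_\eps\ +\ |\nabla u_\eps|^2\,\partial_u k_\eps\bigr).$$
The crux is that $S\le 0$ at every point where $|\nabla u_\eps|>\|\nabla\psi_\eps^\pm\|_\infty$. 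At such a point, at most one obstacle is active (elsewhere both $\nabla_x k_\eps$ and $\partial_u k_\eps$ vanish); the compatibility relation \eqref{diffg} then identifies $\nabla_x k_\eps=-\partial_u k_\eps\,\nabla\psi_\eps^\pm$ for the active obstacle, giving
$$S\ =\ 2\sqrt{1+|\nabla u_\eps|^2}\,\partial_u k_\eps\,\bigl(|\nabla u_\eps|^2-\nabla u_\eps\cdot\nabla\psi_\eps^\pm\bigr)\ \le\ 0$$
by Cauchy--Schwarz and the monotonicity $\partial_u k_\eps\le 0$. A standard parabolic maximum principle (or Lemma \ref{lemh} applied intrinsically on the graph) then bounds $\|\nabla u_\eps(\cdot,t)\|_\infty$ by $\max(\|\nabla u_0\|_\infty,\|\nabla\psi_\eps^\pm\|_\infty)\le C$ uniformly in $t$.

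For the obstacle inequalities \eqref{obstapprox} I would argue by contradiction. If $u_\eps>\psi_\eps^++\eps$ somewhere, pick a first time $t_0$ at which $\sup_x(u_\eps-\psi_\eps^+-\eps)>0$ together with a (nearly) maximizing point $x_0$. There $u_\eps-\psi_\eps^+\ge\eps$, so $k_\eps(x_0,u_\eps(x_0,t_0))=-2N$; moreover $\nabla u_\eps=\nabla\psi_\eps^+$ and $\nabla^2 u_\eps\le\nabla^2\psi_\eps^+$, so the mean-curvature term for $u_\eps$ is bounded above by the one for $\psi_\eps^+$. Combining with \eqref{eqN} gives
$$(u_\eps)_t(x_0,t_0)\ \le\ N\ -\ 2N\sqrt{1+|\nabla\psi_\eps^+|^2}\ <\ 0,$$
contradicting the definition of $t_0$. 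The lower bound is symmetric.

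The main obstacle is the sign analysis of $S$ in the spatial estimate: the identities \eqref{diffg} are designed precisely so that the $x$-derivative of $k_\eps$ (which could otherwise generate growth of $|\nabla u_\eps|$) converts, via $\partial_u k_\eps\le 0$, into a definite-sign expression at large-gradient points. Once this is in hand, the remaining steps are routine parabolic maximum-principle technology.
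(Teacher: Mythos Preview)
Your proposal is correct and follows essentially the same route as the paper. The paper's own proof simply cites Propositions \ref{spatial} and \ref{timebound} for the Lipschitz bounds and then shows that $\psi_\eps^\pm\mp\eps$ are explicit stationary sub-/supersolutions of \eqref{approxpb} (since $k_\eps(x,\psi_\eps^-\!-\eps)=2N\ge$ the curvature bound \eqref{eqN}), concluding \eqref{obstapprox} by parabolic comparison; your first-time contradiction argument is the same maximum-principle content phrased differently. For the spatial Lipschitz estimate the paper works with the intrinsic quantity $v=\langle\nu,e_{d+1}\rangle^{-1}=\sqrt{1+|\nabla u_\eps|^2}$ via Lemma \ref{lemcgrad} and Proposition \ref{nolossemb}, whereas you compute directly with $w=|\nabla u_\eps|^2$ in $\R^d$ coordinates; these are equivalent since $v^2=1+w$, and in both cases the crucial sign comes from the structural identity \eqref{diffg} together with $\partial_u k_\eps\le 0$.
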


\begin{proof}
The Lipschitz bounds of the solution are clear (it is Proposition \ref{spatial} and \ref{timebound}). 

In order to prove the second assertion, 
let us notice that by \eqref{eqN} and the definition of $k_\eps$, 
we have 
$$
k_\eps(x,\psi_\varepsilon^- - \varepsilon) = 2N \gs 
\left\|\sqrt{1+|\psi_\eps^-|^2}
\div \left( \frac{\psi_\eps^-}{\sqrt{1+|\psi_\eps^-|^2}} 
\right)\right\|_{L^\infty(\R^d)}\,,
$$ 
so that $\psi_\varepsilon^- - \varepsilon$ is a subsolution of \eqref{approxpb}. By the parabolic comparison principle (as in Proposition \ref{propob}), we deduce that
$$\psi_\varepsilon^- - \varepsilon \ls u_\varepsilon.$$ The same arguments shows the other inequality in \eqref{obstapprox}.
\end{proof}

\smallskip

\noindent{\it Conclusion of the proof of Theorem \ref{mainbis}.}
Since the solutions $u_\varepsilon$ are equi-Lipschitz in space and time, 
they converge uniformly, as $\varepsilon \to 0$, to a limit function $u$
which is also Lipschitz continuous on $\R^d\times [0,+\infty)$.\\ 
Equation \eqref{obstapprox} yields
$$\psi^- \ls u \ls \psi^+,$$
and Proposition \ref{provisco} gives that $u$ is 
a viscosity solution of \eqref{pb}.

Concerning the regularity of $u$, we proved that $(u_\varepsilon)_t$ and $\nabla u_\varepsilon$ are bounded on $[0,T],$ for any $T$ in the approximate problem. This gives a bound on the mean curvature of the approximate solution. This bound does not depend on $\varepsilon$ and remains true for the viscosity solution. As a result, the exact solution has bounded mean curvature and bounded gradient, which shows that $\Delta u$ is $L^\infty$ and, by elliptic regularity theory, $u$ is also in $W^{2,p}$ for any $p>1$, and so $C^{1,\alpha}$ for every $\alpha < 1$ (see \cite{lunardi95} for details).

By Theorem \ref{thmsha} below,
we can also directly apply to the solution $u$ a regularity result by Petrosyan and Shahgholian in \cite{sha08,pet07}. 
It follows that $u$ is in fact of class $\mathcal C^{1,1}$, and this concludes the proof of Theorem \ref{mainbis}.

\qed

\section{Proof of Theorem \ref{maintris}}

We compute the evolution of the area of the graph of $u$: 
\begin{equation}\label{graph}
\frac{d}{d t} \int_Q \sqrt{1+\vnau^2} = \int_Q \frac{\scal{\nabla u_t}{\nabla u}}{\sqrt{1+\vnau^2}}  
= -\int_Q u_t \div \left( \frac{\nabla u}{\sqrt{1+\vnau^2}} \right). 
\end{equation}
Notice that, for almost every $t>0$, 
$u_t(t,x) =0$ almost everywhere on the contact set. Indeed, for almost every $t$, $u_t$ exists for almost every $x\in Q$. 
If $u(x,t)=\psi^\pm (x)$, then $u-\psi^\pm$ reaches an extremum in $(x,t)$, which gives, $u_t(x,t)=0.$ In particular, from 
\eqref{graph} we get
$$
\frac{d}{d t} \int_Q \sqrt{1+\vnau^2} =-\int_Q u_t \left( \frac{u_t}{\sqrt{1+\vnau^2}} \right).
$$
Integrating this equality in time, we obtain
$$
\left. 
\int_Q \sqrt{1+\vnau^2} \right \vert_0^T = \int_0^T \int_Q -\frac{u_t^2}{\sqrt{1+\vnau^2}} . 
$$
which shows that 
$$ \int_0^T \int_{Q} u_t^2$$ is uniformly bounded in $T$. As a result $u_t \in L^2(\R^+,Q)$ so $u$ is in $H^1(Q,B_R).$

Since $\Vert u_t \Vert_{L^2(Q)}$ is $L^2(\R^+)$, there exists a sequence $t_n \to \infty$ such that $$\Vert u_t \Vert_{L^2(Q)} (t_n) \underset{n\to \infty}{\longrightarrow} 0.$$ 
In addition, $u(t_n)$ is equi Lipschitz and converges uniformly on compact sets to some $u_\infty$ which therefore satisfies in the viscosity sense
$$\sqrt{1+\vnau^2}\div\left(\frac{\nabla u}{\sqrt{1+\nabla u^2}} \right) = 0$$ with obstacles $\psi^\pm$ (see Appendix \ref{appvisco}).

\qed 

\begin{rem}\rm
By \cite{ilm98}, $u_{\min}$ is analytic out 
of the (closed) contact set $\{u_{\min}=\psi^\pm\}$.
\end{rem}

\appendix\label{appvisco}

\section{Viscosity solutions with obstacles}
\subsection{Definition of viscosity solution}
Given an open subset $B$ of $\R^d$,
let $u_0$, $\psi^+$ and $\psi^-$ be three Lipschitz functions $B \to \R$
such that $$\psi^-(x,0) \ls u_0(x) \ls \psi^+(x,0).$$ 
We are interested in the viscosity solutions of the equation
\begin{equation} u_t = \sqrt{1+|\nabla u|^2} \div \left( \frac{\nabla u}{\sqrt{1+|\nabla u|^2}} \right) , \qquad u(x,0)=u_0(x), \label{pb}
\end{equation}
with the constraint
\begin{equation}\label{eqpsi}
\psi^-(x) \ls u(x,t) \ls \psi^+(x).
\end{equation}



\begin{defi}[see \cite{c92user,mercier}] We say that a function $u : B\times [0,T) \to \R$ is a 
{\em viscosity subsolution} of \eqref{pb} if $u$ satisfies the following conditions:
\begin{itemize}
\item $u$ is upper semicontinuous;
\item $u(x,0) \ls u_0(x)$;
 \item \begin{equation}
 \psi^-(x) \ls u(x,t) \ls \psi^+(x); \label{obstacles}
\end{equation}
\item for any $(x_0,t_0) \in \Rn \times \R^+$ and $\varphi \in C^2$ such that $u-\varphi$ has a maximum at $(x_0,t_0)$ and $u(x_0,t_0) > \psi^-(x_0)$,
\begin{equation}
u_t \ls \sqrt{1+|\nabla u|^2} \div \left( \frac{\nabla u}{\sqrt{1+|\nabla u|^2}} \right). 
\label{subsol}
\end{equation}
\end{itemize}
Similarly, $u$ is a {\em viscosity supersolution} of \eqref{pb} if: 
\begin{itemize}
\item $u$ is lower semicontinuous;
\item $u(x,0) \gs u_0(x)$;
\item \eqref{obstacles} holds;
\item for any $(x_0,t_0) \in \Rn \times \R^+$ and $\varphi \in C^2$ such that $u-\varphi$ has a minimum at $(x_0,t_0)$ and $u(x_0,t_0) < \psi^+(x_0)$,
\begin{equation*}
 u_t \gs \sqrt{1+|\nabla u|^2} \div \left( \frac{\nabla u}{\sqrt{1+|\nabla u|^2}} \right). 
\end{equation*}
\end{itemize}
We say that $u$ is 
a {\em viscosity solution} of \eqref{pb} if it is both a super and a subsolution.
\end{defi}
\smallskip



\subsection{Comparison principle}

In order to prove uniqueness of continous viscositysolutions of \eqref{pb}, 
we shall prove a comparison principle 
between solutions following \cite[Theorem 4]{giga90} (see also \cite{chen91}).
 
\begin{prop}
 If $u$ is a viscosity subsolution of \eqref{pb} on $[0,T)$, $v$ is a viscosity supersolution, if $\psi^\pm$ are Lipschitz in space and if $u(x,0) \ls v(x,0)$, then $u(x,t)\ls v(x,t)$ for all $(x,t)\in \mathbb R^n \times [0,T).$
\label{compple}
\end{prop}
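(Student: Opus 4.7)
The plan is to follow the Crandall-Ishii-Lions doubling-of-variables technique for degenerate parabolic equations, as in \cite{giga90,chen91}, with one extra step tailored to the obstacle constraints. I would assume for contradiction that $M := \sup_{\R^d \times [0,T)}(u - v) > 0$ and introduce the auxiliary function
$$
\Phi(x,y,t,s) := u(x,t) - v(y,s) - \frac{|x-y|^2}{2\eps} - \frac{(t-s)^2}{2\delta} - \frac{\sigma}{T-t} - \gamma(|x|^2+|y|^2)
$$
with small parameters $\eps,\delta,\sigma,\gamma>0$. The $\gamma$-term guarantees that $\Phi$ attains its supremum (using the at most linear growth of $u$ and $v$ coming from their space-Lipschitz bounds), and for $\sigma,\gamma$ small enough one has $\sup\Phi \gs M/2$. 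Let $(\hat x,\hat y,\hat t,\hat s)$ denote a maximum point. Classical doubling estimates yield $|\hat x - \hat y|\to 0$ and $|\hat t - \hat s| \to 0$ as $\eps,\delta \to 0$; the initial condition $u(\cdot,0)\ls v(\cdot,0)$ together with $\sup\Phi \gs M/2$ forces $\hat t,\hat s > 0$ once $\delta$ is sufficiently small.

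The step specific to the obstacle problem is to verify that $u(\hat x,\hat t) > \psi^-(\hat x)$ and $v(\hat y,\hat s) < \psi^+(\hat y)$, which is required in order to invoke the subsolution and supersolution inequalities at these points. Suppose instead that $u(\hat x,\hat t) = \psi^-(\hat x)$. Combining the bound $u(\hat x,\hat t) - v(\hat y,\hat s) \gs M/2$ with the obstacle constraint $v \gs \psi^-$ built into the definition of supersolution yields
$$
\psi^-(\hat x) - \psi^-(\hat y) \gs M/2,
$$
in contradiction with the Lipschitz continuity of $\psi^-$ and $|\hat x - \hat y|\to 0$. A symmetric argument, using $u \ls \psi^+$, rules out $v(\hat y,\hat s) = \psi^+(\hat y)$.

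Once the obstacles are inactive at the maximum point, the standard viscosity machinery applies. The parabolic version of the Crandall-Ishii maximum principle produces semijets $(a,p,X) \in \overline{\mathcal P}^{2,+}u(\hat x,\hat t)$ and $(b,q,Y) \in \overline{\mathcal P}^{2,-}v(\hat y,\hat s)$ with
$$
a - b = \frac{\sigma}{(T-\hat t)^2}, \qquad p - q = 2\gamma(\hat x+\hat y), \qquad X \ls Y + O(\gamma+\eta),
$$
where $\eta \to 0$ is an auxiliary parameter in Ishii's lemma. The graphical mean curvature operator
$$
\mathcal F(p,A) := \tr\left[\left(I - \frac{p \otimes p}{1+|p|^2}\right)A\right]
$$
is smooth in $(p,A)$ and nondecreasing in $A$, so the viscosity inequalities combine to
$$
\frac{\sigma}{(T-\hat t)^2} = a - b \ls \mathcal F(p,X) - \mathcal F(q,Y) = o(1)
$$
as $\eps,\delta,\gamma,\eta \to 0$, a contradiction which establishes $u \ls v$. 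The genuinely delicate step is the obstacle-inactivity argument in the preceding paragraph; once it is in place, the remainder is the routine second-order parabolic viscosity comparison argument.
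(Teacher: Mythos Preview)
Your approach is essentially the same as the paper's: both argue by contradiction via doubling of variables following \cite{giga90}, and the crucial new step in each is exactly the obstacle-inactivity argument---if $u(\hat x,\hat t)=\psi^-(\hat x)$ then the constraint $v\gs\psi^-$ and the Lipschitz bound on $\psi^-$ force $u(\hat x,\hat t)-v(\hat y,\hat s)\ls L|\hat x-\hat y|$, which is incompatible with $\sup\Phi\gs M/2$ once $|\hat x-\hat y|$ is small. The cosmetic differences (you double the time variable and use a quadratic spatial penalty, whereas the paper keeps a single time and uses the quartic penalty $|x-y|^4/(4\eps)$ inherited from \cite{giga90}) do not matter here since the graphical operator $\mathcal F(p,A)=\tr\big[(I-\tfrac{p\otimes p}{1+|p|^2})A\big]$ is smooth at $p=0$. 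One small correction: the at-most-linear growth of $u$ and $v$ that makes $\Phi$ attain its supremum does not come from ``their space-Lipschitz bounds'' (sub- and supersolutions are merely semicontinuous) but from the sandwich $\psi^-\ls u,v\ls\psi^+$ together with the assumed Lipschitz continuity of $\psi^\pm$.
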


\begin{proof}
We will check that the proof of  \cite[Theorem 2.1]{giga90}
can be extended to the obstacle case. 
Notice first that the assumptions $(A.1)-(A.3)$ of \cite[Theorem 2.1]{giga90} 
are satisfied also in our case. 
Indeed, $(A.1)$ comes directly from the Lipschitz bound on $\psi^\pm$ and the constraint $\psi^- \ls u,v\ls \psi^+$ whereas $(A.2)$ and $(A.3)$ result from the assumed time zero comparison.

Let us show that \cite[Proposition 2.3]{giga90} also holds. 
Indeed, up to Equation (2.9) nothing chenges. 
To continue the proof, using the same notation of \cite[Proposition 2.3]{giga90}, we have to check that if
$$ \sup_{V} (w-\Psi) >0,$$ 
then the supremum is reached in the complementary of the contact set $\{u = \psi^-\} \cup \{v = \psi^+\}$. 

Indeed, notice that if $u(x,t)=\psi^-(x)$, then, for all $x,y,t,s$,
\begin{align*}
 u(x,t)-v(y,s) &= \psi^-(x)-v(y,s) \ls \psi^-(y) + L (|x-y|) - v(y,s) \ls  L (|x-y|) 
\end{align*}
since $v \gs \psi^-.$
Hence, if $u(x,t)=\psi^-(x)$, with $K'>L$, we must have $w-\Psi \ls 0$, so the supremum of $w-\Psi$ is attained in the complementary of $\{u=\psi^-\}$. One can
show similarly that the supremum is reached in the complementary of  
$\{v=\psi^+\}$. Hence Proposition 2.3 of \cite{giga90} holds.

{}From Proposition 2.4 to Lemma 2.7 of \cite{giga90}, 
every result holds without changes.

Concerning the proof of Theorem 2.1 of \cite{giga90}, the first assumption is
$$\alpha = \limsup_{\theta \to 0}\{w(t,x,y), \, \mid \, |x-y|\ls \theta\} >0.$$
Then, Proposition 2.4 gives constants $\delta_0$ and $\gamma_0$ such that for all $\delta \ls \delta_0,$ $\gamma \ls \gamma_0$ and $\varepsilon >0$,
there holds
$$\Phi(\hat x, \hat y , \hat t) := \sup_{\Rn \times \Rn \times [0,T)} \Phi(x,y,t) > \frac \alpha 2$$
with
$$\Phi(t,x,y) = u(x,t)-v(y,t) - \frac{|x-y|^4}{4\varepsilon} - \delta(|x|^2 + |y|^2) - \frac{\gamma}{T-t}$$

To conclude the proof, we only have to show that the maximum of $\Phi$ is once again attained on the complementary of $\{ u = \psi^-\} \cup \{v = \psi^+\}.$ 
In the same way as for Proposition 2.3, if $u(x,t)=\psi^-(x)$, we can write
\begin{align*}
 \Phi(t,x,y) &= u(x,t)-v(y,t) - \frac{|x-y|^4}{4\varepsilon} - \delta(|x|^2 + |y|^2) - \frac{\gamma}{T-t} \\
 &\ls \psi^-(y) + L|x-y| - v(y,t) \ls L|x-y|.
\end{align*}

Thanks to Proposition 2.5, $|\hat x - \hat y| \underset{\varepsilon \to 0}{\longrightarrow} 0$. So, with $\varepsilon$ sufficiently small (one can reduce the 
quantity $\varepsilon_0$ given by Proposition 2.6), $\Phi$ has its maximum out of $\{u=\psi^-\}$ (and similarly out of $\{v=\psi^+\}$), which enables the application of Lemma $2.7$ and gives a contradiction as in \cite{giga90}.
\end{proof}

\subsection{Existence}
\label{vexistence}
In this subsection, we prove the following result:
\begin{prop}
There exists a continuous viscosity solution to \eqref{pb}.
\end{prop}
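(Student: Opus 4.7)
The plan is to proceed by approximation, in the same spirit as the proof of Theorem \ref{mainbis}. First, I regularize the data: pick mollified obstacles $\psi^\pm_\eps\in C^\infty(B)$ with $\psi^\pm_\eps\to\psi^\pm$ uniformly in $W^{1,\infty}$ on compact sets, and a smooth initial datum $u_{0,\eps}$ satisfying $\psi^-_\eps+\eps \ls u_{0,\eps}\ls \psi^+_\eps-\eps$ (after shifting by $O(\eps)$ if needed), with $u_{0,\eps}\to u_0$ uniformly and $\|\nabla u_{0,\eps}\|_\infty$ bounded by $\max(\|\nabla u_0\|_\infty,\|\nabla\psi^\pm\|_\infty)+o(1)$. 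Then I consider the penalized equation \eqref{approxpb} with $k_\eps(x,u)$ defined as in Section 4. By Proposition \ref{spatial} (applied with the obvious localization $B_i=\R^{d+1}$, $\omega_i=e_{d+1}$, $\phi\equiv 1$), this problem admits a smooth global solution $u_\eps$ with $\|\nabla u_\eps(\cdot,t)\|_\infty$ uniformly bounded in $\eps$ and $t$.

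Next, I need equicontinuity of $(u_\eps)$ in time, uniformly in $\eps$. Since $u_0$ is only Lipschitz, Proposition \ref{timebound} is not available (its bound blows up when mollifying a Lipschitz datum), so I instead use the ball-barrier argument from Proposition \ref{procinq}: fix $(x_0,t_0)$; the point $(x_0,u_{0,\eps}(x_0)\pm C\delta)$ is the center of a ball of radius $\sim\delta/C$ disjoint from the graph of $u_\eps(\cdot,t_0)$ (by the spatial Lipschitz bound), and under the forced mean curvature flow with $\|k_\eps\|_\infty\le 2N$ this ball survives for a time $\gs c\delta^2$ independent of $\eps$, via Lemma \ref{lemball}. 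The parabolic comparison principle applied to the smooth flow then yields $|u_\eps(x_0,t)-u_{0,\eps}(x_0)|\ls C\sqrt{t-t_0}$, and by the same argument between any two times. Hence $(u_\eps)$ is equi-Lipschitz in space and $1/2$-Hölder in time on compact sets.

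By Arzelà--Ascoli I can extract a subsequence $u_{\eps_n}\to u$ locally uniformly, with $u$ Lipschitz in space and $1/2$-Hölder in time. The constraint $\psi^-\ls u\ls \psi^+$ follows directly from \eqref{obstapprox} in Proposition \ref{gradest} by letting $\eps\to 0$. Finally, the passage to the viscosity limit goes exactly as in Proposition \ref{provisco}: given $\varphi\in C^2$ such that $u-\varphi$ has a strict local maximum at $(x_0,t_0)$ with $u(x_0,t_0)>\psi^-(x_0)$, maxima $(x_{\eps_n},t_{\eps_n})$ of $u_{\eps_n}-\varphi$ converge to $(x_0,t_0)$, and for $n$ large $u_{\eps_n}(x_{\eps_n},t_{\eps_n})-\psi^-_{\eps_n}(x_{\eps_n})\gs \eps_n$, so $k_{\eps_n}(x_{\eps_n},\varphi(x_{\eps_n},t_{\eps_n}))\ls 0$. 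The classical viscosity inequality for $u_{\eps_n}$ together with the sign of $k_{\eps_n}$ then passes to the limit, giving the subsolution property of $u$; the supersolution property is symmetric.

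The main obstacle is the time regularity, since the $u_0\in C^{1,1}$ bound of Proposition \ref{timebound} degenerates under mollification of a merely Lipschitz initial datum. The geometric $\sqrt{t}$-barrier in Proposition \ref{procinq} is what resolves this, since it depends only on the uniform spatial Lipschitz bound and on $\|k_\eps\|_\infty\le 2N$, both of which are $\eps$-independent. All remaining steps are routine consequences of the machinery already built in Sections \ref{geometric} and 4.
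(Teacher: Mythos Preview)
Your approach is correct but takes a genuinely different route from the paper. The paper proves existence via Perron's method: it first builds explicit sub- and super-barriers $w^\pm$ (using the family $g_{\alpha,b}^a$ in \eqref{defbar}) that pin down the initial datum, then defines $W$ as the supremum of all subsolutions, shows $W^\ast$ is a subsolution (Lemma~\ref{supsol}), and uses a standard bump lemma to show that $W_\ast$ must be a supersolution; comparison (Proposition~\ref{compple}) then forces $W_\ast=W=W^\ast$, hence continuity. No approximation by smooth problems is used at all.

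Your argument instead recycles the penalization machinery of Sections~\ref{geometric} and~4: mollify the data, solve the smooth forced problem \eqref{approxpb}, and pass to the limit by Arzel\`a--Ascoli. This is perfectly legitimate and has the advantage that continuity, the spatial Lipschitz bound, and the $\tfrac12$-H\"older bound in time come for free from the construction, whereas Perron's method gives only a bare viscosity solution and one must argue continuity a posteriori. The trade-off is that your proof imports heavier PDE input (short-time existence for smooth quasilinear parabolic problems, the geometric estimates behind Propositions~\ref{nolossemb} and~\ref{procinq}) and is tied to the whole-space setting $B=\R^d$, while Perron's method is entirely internal to the viscosity framework and needs only the comparison principle. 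One minor point: your requirement $\psi^-_\eps+\eps\ls u_{0,\eps}\ls\psi^+_\eps-\eps$ is unnecessary (and may be impossible if the obstacles touch); it suffices that $u_{0,\eps}$ be smooth, since the penalized equation \eqref{approxpb} is well-posed regardless, and the barrier \eqref{obstapprox} already gives the constraint in the limit.
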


We follow \cite{c92user} to build a solution by means of the Perron's method. Let us state an obvious but useful proposition and 
a key lemma for applying Perron's method.

\begin{prop}
Let $u$ be a subsolution of the mean curvature motion for graphs 
(without obstacles) which satisfies $u \ls u^+$. 
Then, $u_{ob}:=u\vee u^-$ is a subsolution of \eqref{pb} with obtacles (the same happends for $v$ supersolution and $v_{ob} = v \wedge u^+$). \label{wedgevee}
\end{prop}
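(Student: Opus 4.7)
The plan is to verify the four conditions in the definition of a viscosity subsolution of \eqref{pb} for $u_{ob} := u \vee \psi^-$. The first three are essentially bookkeeping: $u_{ob}$ is upper semicontinuous as the pointwise maximum of two upper semicontinuous functions; the initial condition $u_{ob}(x,0) \ls u_0(x)$ follows from $u(x,0) \ls u_0(x)$ together with $\psi^-(x) \ls u_0(x)$; and the obstacle constraint \eqref{obstacles} holds because $u_{ob} \gs \psi^-$ by definition, while $u_{ob} \ls \psi^+$ follows from the assumption $u \ls \psi^+$ together with $\psi^- \ls \psi^+$.

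The substantive step is the PDE inequality \eqref{subsol} at points where $u_{ob}$ lies strictly above the lower obstacle. Suppose $(x_0,t_0)$ and $\varphi \in C^2$ are such that $u_{ob} - \varphi$ attains a maximum at $(x_0,t_0)$ and $u_{ob}(x_0,t_0) > \psi^-(x_0)$. The strict inequality forces $u_{ob}(x_0,t_0) = u(x_0,t_0)$ (otherwise $u_{ob}(x_0,t_0) = \psi^-(x_0)$, contradiction). Then, since $u \ls u_{ob}$ everywhere and the two agree at $(x_0,t_0)$, one has
\[
u(x,t) - \varphi(x,t) \;\ls\; u_{ob}(x,t) - \varphi(x,t) \;\ls\; u_{ob}(x_0,t_0) - \varphi(x_0,t_0) \;=\; u(x_0,t_0) - \varphi(x_0,t_0),
\]
so $\varphi$ also touches $u$ from above at $(x_0,t_0)$. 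Because $u$ is a viscosity subsolution of the unconstrained mean curvature flow, the inequality
\[
\varphi_t \;\ls\; \sqrt{1+|\nabla \varphi|^2}\,\div \left( \frac{\nabla \varphi}{\sqrt{1+|\nabla \varphi|^2}} \right)
\]
holds at $(x_0,t_0)$, which is precisely \eqref{subsol} for $u_{ob}$.

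The argument for the second assertion (supersolutions) is symmetric, swapping $\vee \psi^-$ for $\wedge \psi^+$, upper semicontinuity for lower semicontinuity, maxima for minima, and the condition $u_{ob}(x_0,t_0) > \psi^-(x_0)$ for $v_{ob}(x_0,t_0) < \psi^+(x_0)$. I do not expect any real obstacle here; the only subtle point is ensuring that the strict gap from the obstacle propagates cleanly from $u_{ob}$ to $u$ so that the touching-from-above property transfers to $u$, and this is immediate from the definition of the max.
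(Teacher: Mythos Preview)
Your argument is correct and is precisely the direct verification of the four items in the definition of a viscosity subsolution with obstacles; the only point needing care---that strict separation $u_{ob}(x_0,t_0)>\psi^-(x_0)$ forces $u_{ob}(x_0,t_0)=u(x_0,t_0)$ and hence transfers the touching test function to $u$---is handled exactly as it should be. The paper itself does not spell this out but simply refers to \cite[Lemma~4.2]{c92user} with ``obvious changes,'' so your write-up is in fact more explicit than the original while following the same standard route.
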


In the sequel, we shall denote by $u^\ast$ (resp. $u_\ast$) the upper
(resp. lower) semicontinuous envelope of a function $u$. 

\begin{lem}
Let $\mathcal F$ be a family of subsolutions of \eqref{pb}. We define 
$$U(x,t) = \sup\{u(x,t) \, \mid \, u \in \mathcal F\}.$$
Then, $U^\ast$ is a subsolution of \eqref{pb}.
\label{supsol}
\end{lem}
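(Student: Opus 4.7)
The plan is to follow the standard Perron argument from Crandall--Ishii--Lions \cite{c92user}, adapted to handle the obstacle condition, which is the only real novelty compared to the classical setting.

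First I would dispose of the easy requirements. By construction $U^\ast$ is upper semicontinuous. Since every $u\in\mathcal F$ is USC and satisfies $u(\cdot,0)\le u_0$, taking the sup and the USC envelope preserves this, so $U^\ast(\cdot,0)\le u_0$. The two-sided obstacle bound $\psi^-\le U^\ast\le \psi^+$ is immediate from \eqref{obstacles} applied to each $u\in\mathcal F$, together with the continuity of $\psi^\pm$ (so USC envelope does not destroy the upper bound, and the lower bound by $\psi^-$ only improves when taking a sup).

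The substantive part is the viscosity subsolution inequality. Fix a test function $\varphi\in C^2$ and a point $(x_0,t_0)$ such that $U^\ast-\varphi$ has a maximum at $(x_0,t_0)$ and $U^\ast(x_0,t_0)>\psi^-(x_0)$. By subtracting a quadratic penalty $|x-x_0|^4+(t-t_0)^4$ from $\varphi$ I can assume the maximum is strict and local, on a closed ball $\overline B$ around $(x_0,t_0)$. By definition of the USC envelope, there exist sequences $u_n\in\mathcal F$ and $(\xi_n,\tau_n)\to(x_0,t_0)$ with $u_n(\xi_n,\tau_n)\to U^\ast(x_0,t_0)$. For each $n$, let $(y_n,s_n)\in\overline B$ be a maximizer of the USC function $u_n-\varphi$ on $\overline B$. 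A standard compactness/USC argument (using the strictness of the maximum together with $u_n(\xi_n,\tau_n)-\varphi(\xi_n,\tau_n)\to U^\ast(x_0,t_0)-\varphi(x_0,t_0)$) forces $(y_n,s_n)\to(x_0,t_0)$ and $u_n(y_n,s_n)\to U^\ast(x_0,t_0)$.

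The key extra step, specific to the obstacle problem, is to check that the subsolution inequality for $u_n$ is available at $(y_n,s_n)$. Since $U^\ast(x_0,t_0)>\psi^-(x_0)$ and $\psi^-$ is continuous, the convergence $u_n(y_n,s_n)\to U^\ast(x_0,t_0)$ and $y_n\to x_0$ give $u_n(y_n,s_n)>\psi^-(y_n)$ for all $n$ large enough. Hence the subsolution definition for $u_n$ applies at $(y_n,s_n)$ and yields
\[
\varphi_t(y_n,s_n)\ \le\ \sqrt{1+|\nabla\varphi(y_n,s_n)|^2}\,\div\!\left(\frac{\nabla\varphi(y_n,s_n)}{\sqrt{1+|\nabla\varphi(y_n,s_n)|^2}}\right).
\]
Passing to the limit $n\to\infty$ using $\varphi\in C^2$ gives the same inequality at $(x_0,t_0)$, which is \eqref{subsol} for $U^\ast$. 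Reabsorbing the quadratic perturbation (whose first and second derivatives vanish at $(x_0,t_0)$) recovers the inequality for the original $\varphi$.

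The main point where one has to be careful is precisely the interaction with the obstacle: without the hypothesis $U^\ast(x_0,t_0)>\psi^-(x_0)$ one could not hope for the PDE inequality, and one must transfer this strict inequality from $(x_0,t_0)$ to the approximating maximum points $(y_n,s_n)$, which is exactly what the uniform convergence $u_n(y_n,s_n)\to U^\ast(x_0,t_0)$ provides. Everything else is the textbook Perron argument.
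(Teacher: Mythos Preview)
Your argument is correct and is exactly the standard Perron/CIL argument the paper intends: the paper gives no proof of its own but simply refers to \cite{c92user}, Lemma~4.2, ``with obvious changes due to the parabolic situation and obstacles,'' and the obstacle adaptation is precisely the step you isolate (transferring $U^\ast(x_0,t_0)>\psi^-(x_0)$ to $u_n(y_n,s_n)>\psi^-(y_n)$ via the convergence of values and the continuity of $\psi^-$). One small caveat: your line ``taking the sup and the USC envelope preserves this, so $U^\ast(\cdot,0)\le u_0$'' is not justified as stated, since the USC envelope at $t=0$ involves limits from $t>0$ which are not a priori controlled by $u_0$; the paper is aware of this and in fact applies the lemma ``regardless the initial conditions,'' recovering $W^\ast(\cdot,0)=u_0$ separately by comparison with the barriers $w^\pm$.
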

The proof of the proposition and the lemma can be found in \cite{c92user}, Lemma 4.2 (with obvious changes due to the parabolic situation and obstacles).

\paragraph{Construction of barriers}
In the sequel, to claim that the initial condition is taken by the viscosity solution, we need to build barriers to sandwich the solution. More precisely, we want to build a subsolution $w^-$ such that $(w^-)^\ast(x,0) = u_0(x)$ and a supersolution $w^+$ such that $(w^+)_\ast (x,0)=u_0(x).$ 
To show this claim, let us begin by a simple fact.

Let \begin{equation}g_{\alpha,b}^a(x)=-\sum \alpha_i \frac{(x-a)_i^2}{\sqrt{1+(x-a)_i^2}} + b\label{defbar} \end{equation} for some $(a,b) \in \Rn \times \R$ and $\alpha_i \gs 0$ such that $g(x) \ls u_0(x)$. Note in particular that 
\begin{equation} g_{\alpha,b}^a(x) \gs - \sum \alpha_i (x-a)_i^2 +b \quad \text{and} \quad H(g_{\alpha,b}^a)\gs H(g_{\alpha,b}^a)\vert_{t=0} = -2\sum \alpha_i.\label{propbar}\end{equation}

Then, it is easy to show (using Proposition \ref{wedgevee}) that the function $$v(x,t) = \left( g_{\alpha,b}^a(x) +\left( 2\sum_{i=1}^n \alpha_i + 3M \right) t  \right) \vee \psi^-$$ is a subsolution of \eqref{pb}. Indeed, the curvature of $g_{\alpha,b}^a$ is smaller than $2\sum \alpha_i$ and its gradient is bounded by $2$ (so $\sqrt{1+\vert \nabla g \vert^2} \ls 3$).

Thanks to Lemma \ref{supsol}, the function
$$w^-(x,t)= \left(\sup_{\substack{(\alpha_i), c \\ g_{\alpha,b}^c \ls u_0}} \left( g_{\alpha,b}^a(x) -2\sum_{i=1}^n \alpha_i t - 3 Mt  \right) \vee \psi^-\right)^\ast $$ is a subsolution of \eqref{pb} (with obstacles). \\
It remains to show that $(w^-)^\ast (x,0) = u_0(x).$ To see this, notice that since $u_0$ is Lipschitz and $u_0 \gs \psi^-$, $u_0(x) = w^-(x,0)$, yielding $u_0(x) \ls (w^-)^\ast (x,0)$. But for all $t \gs 0$, $v(x,t) \ls u_0(x)$ so $w^- (x,t) \ls u_0(x).$ By continuity of $u_0$, $(w^-)^\ast (x,t) \ls u_0(x)$, which shows that $(w^-)^\ast (x,0)=u_0(x)$, and $w^-$ is a low barrier for solutions of \eqref{pb}. 

We build $w^+$ in the same way.

\paragraph{Perron's method}
We use the classical Perron's method to build a solution of \eqref{pb} on $[0,T)$ for every $t>0.$
Let us define
$$W(x,t)=\sup\{u(x), \, \mid \, u \text{ is a subsolution of \eqref{pb} on }[0,T) \}.$$
Since $\psi^-$ is a subsolution, this set in non empty and $W$ is well defined. Every subsolution is less that $\psi^+$, so is $W$.

Thanks to Lemma \ref{supsol}, $W^\ast$ is a subsolution of \eqref{pb} regardless the initial conditions. Applying the comparison principle (Proposition \ref{compple}) to every subsolution $u$ and $w^+$ gives 
$$\forall x,t, \; W(x,t) \ls w^+(x,t).$$
Considering the upper-semi-continuous envelopes, we get
$$\forall x,t, \; W^\ast(x,t) \ls (w^+)^\ast(x,t)$$
which immediately yields to
$$W^\ast(x,0) = u_0(x).$$
Then, $W^\ast$ is a subsolution (with initial conditions), hence $W^\ast = W$ which shows the upper semi-continuity of $W$.

We want to prove that $W$ is actually a solution of \eqref{pb}. In this order, let us prove the following
\begin{lem}
 Let $u$ be a subsolution of \eqref{pb}. If $u_\ast$ fails to be a supersolution (regardless initial conditions) at some point $(\hat x,\hat t)$ then there exists a subsolution $u_\kappa$ (regardless initial conditions) satisfying $u_\kappa \gs u$ and $\sup u_\kappa - u >0$ and such that $u(x,t)=u_\kappa(x,t)$ for $|x-\hat x|,|t-\hat t| \ls \kappa$.
\end{lem}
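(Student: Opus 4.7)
The plan is to carry out the standard Perron-method \emph{bump construction} from \cite{c92user}, adapted to the obstacle setting. First I would unpack the hypothesis: since $u_\ast$ fails to be a supersolution at $(\hat x,\hat t)$, there exists $\varphi\in C^2$ such that $u_\ast - \varphi$ attains a local minimum at $(\hat x,\hat t)$, with $u_\ast(\hat x,\hat t)<\psi^+(\hat x)$ and
\[
\varphi_t(\hat x,\hat t) \;<\; \sqrt{1+|\nabla \varphi|^2}\,\operatorname{div}\!\left(\frac{\nabla \varphi}{\sqrt{1+|\nabla \varphi|^2}}\right)\bigg|_{(\hat x,\hat t)}.
\]
Up to adding a constant, I may assume $\varphi(\hat x,\hat t)=u_\ast(\hat x,\hat t)$, and up to replacing $\varphi$ by $\varphi(x,t)-|x-\hat x|^4-(t-\hat t)^4$, the minimum is strict.

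Next I would build the perturbed test function
\[
\varphi_\delta(x,t) \;:=\; \varphi(x,t) + \delta - \gamma\bigl(|x-\hat x|^2 + (t-\hat t)^2\bigr),
\]
choosing first $\gamma>0$ small, then $\delta>0$ much smaller. By continuity of the mean curvature operator in $(\nabla\varphi,D^2\varphi)$, the strict subsolution inequality above survives on a parabolic neighborhood $V$ of $(\hat x,\hat t)$, so $\varphi_\delta$ is a classical subsolution of \eqref{pb} on $V$. Shrinking $V$ further, one also has $\varphi_\delta<\psi^+$ on $V$ because $\varphi_\delta(\hat x,\hat t)=u_\ast(\hat x,\hat t)+\delta<\psi^+(\hat x)$. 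Since the minimum of $u_\ast-\varphi$ is strict and equal to $0$, on $V\setminus\{|x-\hat x|,|t-\hat t|\le \kappa\}$ (for some $\kappa$ chosen small enough relative to $\delta,\gamma$) we get $\varphi_\delta<\varphi\le u_\ast\le u$, so the candidate
\[
u_\kappa(x,t) \;:=\; \begin{cases} \max\bigl(u(x,t),\varphi_\delta(x,t)\bigr) & \text{on } V,\\ u(x,t) & \text{otherwise}\end{cases}
\]
glues continuously (in the usc sense) and differs from $u$ only on the $\kappa$-box around $(\hat x,\hat t)$.

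Finally I would verify the three required properties. The maximum of two subsolutions is a subsolution (standard), so $u_\kappa$ solves the PDE part of \eqref{pb}. The obstacle constraint holds because $u_\kappa\ge u\ge\psi^-$ and both $u,\varphi_\delta\le\psi^+$ on $V$. To obtain $\sup(u_\kappa-u)>0$, choose a sequence $(x_n,t_n)\to(\hat x,\hat t)$ with $u(x_n,t_n)\to u_\ast(\hat x,\hat t)=\varphi(\hat x,\hat t)$; then $\varphi_\delta(x_n,t_n)\to\varphi(\hat x,\hat t)+\delta$, so $u_\kappa(x_n,t_n)=\varphi_\delta(x_n,t_n)>u(x_n,t_n)$ for $n$ large. (I read the localization condition ``$u=u_\kappa$ for $|x-\hat x|,|t-\hat t|\le\kappa$'' as a typo for ``$\ge\kappa$'', i.e.\ $u_\kappa$ agrees with $u$ \emph{outside} the $\kappa$-box, which is what the Perron scheme needs.)

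The main obstacle is the bookkeeping of the three small parameters $\kappa,\delta,\gamma$: one must choose them so that simultaneously (i) $\varphi_\delta$ remains a classical subsolution on $V$, using continuity of the fully nonlinear operator and the strict inequality for $\varphi$; (ii) $\varphi_\delta<\psi^+$ on $V$, so the obstacle does not interfere with the subsolution condition; and (iii) $\varphi_\delta<u$ on $\partial V$, so that the two pieces of $u_\kappa$ glue without creating a downward jump that would destroy upper semicontinuity or the subsolution property. Each condition is a standard smallness argument, but all three must be compatible, and this is the only delicate point in the proof.
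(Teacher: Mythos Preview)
Your argument is correct and follows essentially the same Perron bump construction as the paper: the paper works in the jet language, taking $(a,p,X)\in\mathcal J^{2,-}u_\ast(\hat x,\hat t)$ and using the quadratic polynomial $u_\ast(\hat x,\hat t)+\delta+\langle p,x-\hat x\rangle+a(t-\hat t)+\tfrac12\langle X(x-\hat x),x-\hat x\rangle-\gamma(|x-\hat x|^2+t-\hat t)$ in place of your perturbed $C^2$ test function $\varphi_\delta$, but the three smallness choices and the gluing are identical. You are also right that the localization clause in the statement should read ``$\gs\kappa$'' rather than ``$\ls\kappa$''.
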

\begin{proof}
 Let us assume that $u_\ast$ fails to be a supersolution at $(0,1)$. Then there exists $(a,p,X) \in \mathcal J^{2,-} u_\ast (0,1)$ with
$$a + F(p,X) + k(0) \sqrt{1+p^2} < 0.$$
Let us then define
$$u_{\delta,\gamma} (x,t)=u_\ast(0,1) + \delta + \scal{p}{x} + a(t-1) + \frac{1}{2} \scal{Xx}{x} - \gamma (|x|^2+t-1).$$
Thanks to the continuity of $F$ and $k$, $u_{\delta,\gamma}$ is a classical subsolution on $B_r(0,1)$ of $u_t+F(Du,D^2u)+k(x)\sqrt{1+\vnau^2} =0$ for $\delta,\gamma,r$ sufficiently small. By assumption,
$$u(x,t) \gs u_\ast(x,t) \gs u_\ast(0,1) + a(t-1)+ \scal{p}{x} +  \frac{1}{2} \scal{Xx,x}+o(|x|^2+|t-1|).$$
With $\delta=\gamma \frac{r^2+r}{8}$, we get $u(x,t) > u_{\delta,\gamma} (x,t)$ for small $r$ and $ |x|,|t-1| \in [\frac{r}{2}, r].$ Reducing again $r$, we can assume that $u_{\delta,\gamma} < \psi^+$ on $B_r$.
Thanks to Lemma \ref{supsol}, 
$$ \tilde u(x,t)= \left\{ \begin{matrix} \max(u(x,t),u_{\delta,\gamma} (x,t)) \text{ if } |x,t-1| <r \\ u(x) \text{ otherwise} \end{matrix} \right.$$
is a subsolution of \eqref{pb} (with no initial conditions).
\end{proof}

Finally, this lemma combined with the definition of $W$ proves that $W$ is in fact a solution of \eqref{pb} (the initial conditions were already checked).

\subsection{Regularity}\label{secsha}

\begin{prop}
 The unique solution $u$ of \eqref{pb} is Lipschitz in space, with the same constant as $u_0,\psi^\pm.$
\end{prop}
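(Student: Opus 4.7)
The plan is to exploit the translation invariance of the mean curvature operator together with the comparison principle (Proposition \ref{compple}). Let $L := \max(\mathrm{Lip}(u_0), \mathrm{Lip}(\psi^+), \mathrm{Lip}(\psi^-))$. Fix an arbitrary $h \in \R^d$. It suffices to prove that $u(x+h,t) \le u(x,t) + L|h|$ for all $(x,t)$; the reverse inequality follows by swapping $h \leftrightarrow -h$, and together they give the stated Lipschitz estimate.

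Set $\tilde u(x,t) := u(x+h,t)$. Since \eqref{pb} and the constraint \eqref{eqpsi} are translation-invariant in space, $\tilde u$ is the viscosity solution of \eqref{pb} with initial datum $u_0(\cdot+h)$ and obstacles $\psi^{\pm}(\cdot + h)$. I would then define the candidate supersolution $\bar u(x,t) := \tilde u(x,t) + L|h|$ and verify directly that it is a viscosity supersolution of the original problem \eqref{pb} (with the original obstacles $\psi^\pm$). First, the lower-obstacle inequality $\bar u \ge \psi^-$ holds because $\tilde u(x,t) \ge \psi^-(x+h) \ge \psi^-(x) - L|h|$. Second, if $\bar u - \varphi$ attains a local minimum at some $(x_0,t_0)$ with $\bar u(x_0,t_0) < \psi^+(x_0)$, then $\tilde u - (\varphi - L|h|)$ attains a local minimum at the same point, and $\tilde u(x_0,t_0) < \psi^+(x_0) - L|h| \le \psi^+(x_0+h)$ by the Lipschitz bound on $\psi^+$. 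Since $\varphi - L|h|$ and $\varphi$ have identical space-time derivatives, the supersolution inequality for $\tilde u$ at $(x_0,t_0)$ transfers unchanged to $\varphi$, which is precisely the supersolution condition for $\bar u$ at $(x_0,t_0)$. At the initial time, $\bar u(x,0) = u_0(x+h) + L|h| \ge u_0(x)$ by the Lipschitz bound on $u_0$.

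With $u$ a (continuous) viscosity subsolution of \eqref{pb} and $\bar u$ a supersolution that dominates $u$ at $t=0$, Proposition \ref{compple} yields $u(x,t) \le \bar u(x,t) = u(x+h,t) + L|h|$ on $\R^d \times [0,+\infty)$. The symmetric inequality is proved in the same way by showing that $\underline u(x,t) := \tilde u(x,t) - L|h|$ is a viscosity subsolution of \eqref{pb} (using $\underline u \le \psi^+$, which follows from $\tilde u \le \psi^+(\cdot+h) \le \psi^+ + L|h|$, and transferring the subsolution inequality at points where $\underline u > \psi^-$ via the Lipschitz bound on $\psi^-$), and then applying Proposition \ref{compple} in the opposite direction.

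The only genuinely delicate point is the bookkeeping that makes the shifted obstacle inequalities compatible with the original ones: one has to insert exactly the right $\pm L|h|$ shift in order to land inside the non-contact set $\{\tilde u < \psi^+(\cdot+h)\}$ (resp.\ $\{\tilde u > \psi^-(\cdot+h)\}$) where the PDE inequality for $\tilde u$ is actually available. This is the place where all three Lipschitz constants enter, and it forces $L$ to be their maximum rather than just $\mathrm{Lip}(u_0)$. Beyond this, no new analysis is needed: the conclusion is a direct application of Proposition \ref{compple}.
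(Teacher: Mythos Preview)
Your proof is correct and is essentially the paper's own argument: translate $u$ by a fixed vector, shift vertically by $L|h|$, check that the shifted function is a sub-/supersolution of the obstacle problem (using the Lipschitz bounds on $u_0$ and $\psi^\pm$ to land in the correct non-contact set), and conclude via Proposition~\ref{compple}. The paper carries out only the subsolution half (your $\underline u$), which already yields both inequalities once $h$ ranges over all of $\R^d$; your supersolution half is the mirror image.
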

\begin{proof}
We will prove that $u_{z}(x,t)=u(x+z,t) - L|z| $ is in fact a subsolution of \eqref{pb}. The Lipschitz bound is then straightforward (using the comparison principle).

To begin, we notice that $u(x+z,t)-L(|z|) \ls u^+(x,t)$ and $u(x+z,0)-L|z| \ls u_0(x+z) - L|z| \ls u_0(x).$

Assume now that $\varphi$ is any smooth function which is greater than $u_{z}$ with equality at $(\hat x, \hat t).$ Then, either, $u_{z} (\hat x,\hat t) = \psi^-(\hat x,\hat t)$ and nothing has to be done, or $u_{z} (\hat x,\hat t)> \psi^-(\hat x,\hat t)$. In the second alternative, one can write
$$u(\hat x+t,\hat t) > \psi^-(\hat x) = \psi^-(\hat x + z) + (\psi^-(\hat x) - \psi^-(\hat x +z),$$
so
\begin{equation*}u(\hat x+z,\hat t ) > \psi^-(\hat x +z) + \underbrace{\psi^-(\hat x ) - \psi^-(\hat x+z) + L|z| }_{\gs 0} \gs u^-(\hat x+z,\hat t ).\end{equation*}
As $u$ is a subsolution at $(\hat x+z,\hat t)$ and $u(x+z,t) \ls \varphi(x,t) + L|z| $ with equality at $(\hat x + z , \hat t )$, one can write with $y=x+z$, $s=t$,
$$u(y,t) \ls \varphi(y-z,s) + L|z|  := \phi(y,s),$$ with equality at $(\hat y,\hat s)$ which gives
$$\phi_t + F(D\phi(\hat x,\hat t),D^2 \phi(\hat x,\hat t)) \ls 0.$$
Since the derivatives of $\phi $ and $\varphi$ are the same, we deduce 
$$\varphi_t + F(D\varphi, D^2\varphi) \ls 0,$$
what was expected.
\end{proof}
\begin{rem}
With the same arguments, one can prove that
$$\forall \delta > 0,\quad \forall x,t,\quad | u(x,t+\delta) - u(x,t)| \ls \sup_{x} |u(x,\delta) - u(x,0)|.$$
\end{rem}

%

\smallskip

We now present a general regularity result 
by Shahgholian \cite{sha08}
which applies to viscosity solutions for parabolic equations with obstacles. 
\begin{theo}[\cite{pet07}, Th. 4.1]
\label{thmsha}
Let $Q^+ := \{(x,t) \in \Rn \times \R \, : \, |x| < 1, t \in [0,1)\}$ and $H(u) = F(D^2 u , Du) - u_t$ where $F$ is uniformly elliptic. Let $u$ be a continuous viscosity solution of
\begin{equation}\label{sha1}
\begin{aligned}
(u - \psi) H(u)  &= 0,
\\
H(u) &\ls 0, 
\\
u &\gs \psi,
\end{aligned}\end{equation}
in $Q^+$, with boundary data
\begin{equation}
 u(x,t) = g(x,t) \gs \psi(x,t)\qquad \text{ on }\{|x|=1\} \cup \{t=0\}. \label{sha3}
\end{equation}
Assume that $\psi \in C^{1,1}(Q^+)$ and $g$ is continuous. Then, $u \in C^{1,1}$ on every compact subset of $Q^+.$
\label{theosha}
\end{theo}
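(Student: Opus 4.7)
The plan is a three-step program: reduce to a zero-obstacle problem, establish $C^{1,\alpha}$ regularity by standard parabolic theory, and then upgrade to $C^{1,1}$ via a blow-up argument controlled by a parabolic Weiss-type monotonicity formula.

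First, I would set $w := u - \psi \ge 0$. Since $\psi \in C^{1,1}$ and $F$ is uniformly elliptic, the system \eqref{sha1} translates into an obstacle problem for $w$ with bounded right-hand side $f$ coming from the derivatives of $\psi$:
\[
w \ge 0,\qquad \widetilde F(D^2 w, Dw) - w_t \le f,\qquad w\cdot\bigl(\widetilde F(D^2 w, Dw) - w_t - f\bigr) = 0.
\]
Applying parabolic Krylov--Safonov / Wang estimates to $w$ on the non-coincidence set, together with the continuous boundary data $g$ and the interior barrier coming from $\psi$, yields $w \in C^{1,\alpha}_{\mathrm{loc}}(Q^+)$ for some $\alpha \in (0,1)$. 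Since $w$ attains its minimum at every free boundary point, $Dw(x_0,t_0)=0$ there.

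The heart of the argument is the optimal growth estimate: for each compact $K \Subset Q^+$ there is a constant $C$ such that
\[
\sup_{Q_r(x_0,t_0)} w \;\le\; C\, r^2
\]
at every free boundary point $(x_0,t_0)\in K$ and every small $r>0$. I would prove this by contradiction: assuming sequences $(x_j,t_j)$ and $r_j \to 0$ with $S_j := r_j^{-2}\sup_{Q_{r_j}(x_j,t_j)} w \to \infty$, rescale
\[
w_j(y,s) := \frac{w(x_j + r_j y,\ t_j + r_j^2 s)}{S_j\, r_j^2}.
\]
Each $w_j$ solves a parabolic obstacle problem with vanishing right-hand side and satisfies $\sup_{Q_1} w_j = 1$. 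The $C^{1,\alpha}$ estimates give uniform compactness, producing a nonnegative global limit $w_\infty$ solving the obstacle problem for a constant-coefficient operator with zero source. A parabolic Weiss-type monotonicity formula, obtained by testing against the backward heat kernel and exploiting the natural parabolic scaling, forces $w_\infty$ to be parabolically $2$-homogeneous backward in time. The Caffarelli--Petrosyan--Shahgholian classification of such global solutions shows they are either half-space solutions of the form $\tfrac12\bigl(\max(0,x\cdot\nu)\bigr)^2$ or nonnegative quadratic polynomials in $x$; in both cases the growth is exactly quadratic, contradicting $S_j \to \infty$.

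Once quadratic growth is established, full $C^{1,1}$ regularity follows: inside the coincidence set one has $D^2 u = D^2 \psi$ almost everywhere; outside, interior parabolic estimates combined with the quadratic decay of $w$ from the free boundary give uniform bounds on $D^2 u$ and $u_t$ up to the free boundary. The main obstacle is the classification step in the blow-up argument --- unlike the elliptic case where the ACF monotonicity formula suffices, the parabolic setting requires a more delicate backward-heat-kernel monotonicity, together with careful handling of the anisotropic parabolic scaling $t\sim r^2$ and of the fact that homogeneous solutions need not be stationary. This is precisely where the technical work of \cite{pet07} resides, and the reason why the result is invoked here as a black box.
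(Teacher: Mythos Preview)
The paper does not prove this statement at all: Theorem~\ref{thmsha} is quoted verbatim from \cite{pet07} and used as a black box, with the only surrounding text being the verification that the mean curvature operator with Lipschitz gradient bound fits the hypotheses and that the domain can be covered by small parabolic cylinders avoiding one of the two obstacles. So there is no ``paper's own proof'' to compare your sketch against.

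That said, your outline is a reasonable summary of the strategy actually used in \cite{pet07,sha08}: reduction to a zero obstacle with bounded inhomogeneity, preliminary $C^{1,\alpha}$ regularity, and then optimal quadratic growth at free boundary points via a blow-up/compactness argument combined with a classification of global homogeneous solutions. One point to be careful about is that the Weiss-type monotonicity you invoke is not the central tool in Shahgholian's argument for the $C^{1,1}$ estimate; the quadratic growth there is obtained more directly by a dyadic scaling argument together with the strong maximum principle and the observation that a global nonnegative solution of the homogeneous problem with zero right-hand side and vanishing at the origin must be trivial. The monotonicity formula is used rather for the finer analysis of the free boundary. Your contradiction setup (letting $S_j\to\infty$ and normalizing) is correct in spirit, but the classification step you cite does not quite apply as stated, since after dividing by $S_j$ the limit equation has \emph{zero} right-hand side, so the candidate blow-ups $\tfrac12(\max(0,x\cdot\nu))^2$ do not arise; the limit is simply a nonnegative caloric function vanishing together with its gradient at the origin, and one concludes via Liouville/strong maximum principle rather than the half-space classification.
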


It has to be noticed $H = F - \partial_t$ where $F(D^2u,Du) = -\sqrt{1+\vnau^2} \div\left( \frac{\nabla u }{\sqrt{1+\vnau^2}} \right)$ satisfies all the assumptions of \cite{sha08}, 1.3. Indeed, the uniform ellipticity is provided by the Lipschitz bound obtained in previous subsection.

Moreover, the viscosity solution $u$ of \eqref{pb} satisfies \eqref{sha1} and \eqref{sha3} on every cylinder $Q^+_r(x_0) := \{|x - x_0| \ls r, \; t \in [t_0, t_0 +r) \}$ such that $r$ is choosen sufficiently small in order to have either $Q^+_r(x_0) \cap \{u=\psi^+\} = \emptyset$ or $Q^+_r(x_0) \cap \{u=\psi^-\} = \emptyset$. In the second alternative, change every sign in the equations.

Applying Theorem \ref{theosha} we get a $C^{1,1}$ bound for $u$ on every compact subset of $Q^+_r(x_0)$. To show that $u$ is $C^{1,1}$ in the whole space, just cover $\Rn \times \R^+$ with such $Q_r^+(x_i)$.

\end{document}